\newcommand{\prooflink}[1]{}
\newcommand{\statlink}[1]{}
\newcommand{\mylink}{}
\newcommand{\N}{\ensuremath{\mathds{N}}}
\newcommand{\R}{\ensuremath{\mathds{R}}}
\let\c@case\relax
\newcommand{\rightharpoonupfill}{{$\mathsurround=0pt\mathord-\mkern-6mu%
  \cleaders\hbox{$\mkern-2mu\mathord-\mkern-2mu$}\hfill
  \mkern-6mu\mathord\rightharpoonup$}}
\newbox\myrhpmbox\setbox\myrhpmbox=\hbox{$\rightharpoonup$}
\newbox\myrhpobox\newbox\myrhptbox
\newcommand{\half}[1]{\setbox\myrhpobox=\hbox{\ensuremath{#1}}\ifdim\wd\myrhpmbox>\wd\myrhpobox\setbox\myrhptbox=\hbox to \wd\myrhpmbox{\hfil\box\myrhpobox\hfil}\setbox\myrhpobox=\box\myrhptbox\setbox\myrhptbox=\copy\myrhpmbox\else\setbox\myrhptbox=\hbox to \wd\myrhpobox{\rightharpoonupfill}\fi\vbox{\baselineskip=0pt\box\myrhptbox\kern-0.75\ht\myrhpmbox\box\myrhpobox}}
\begin{document}
\title{The Number of Edges in Maximal 2-planar Graphs\thanks{
    Both the authors are supported by the Swiss National Science Foundation within the collaborative DACH project \emph{Arrangements and Drawings} as SNSF Project 200021E-171681.
    This work (without appendix) is available at the 39th International Symposium on Computational Geometry
(SoCG 2023).
}}

%
%
\author{Michael Hoffmann\inst{1}\orcidID{0000-0001-5307-7106} \and
Meghana~M.~Reddy\inst{1}\orcidID{0000-0001-9185-1246}}
\authorrunning{M. Hoffmann, M. M. Reddy}
%
\institute{Department of Computer Science, ETH Z\"urich, Switzerland \email{hoffmann,meghana.mreddy@inf.ethz.ch}}
\maketitle              

\begin{abstract} 
  A graph is \emph{$2$-planar} if it has local crossing number two,
  that is, it can be drawn in the plane such that every edge has at
  most two crossings.  A graph is \emph{maximal $2$-planar} if no edge
  can be added such that the resulting graph remains $2$-planar.  A
  $2$-planar graph on~$n$ vertices has at most~$5n-10$ edges, and some
  (maximal) $2$-planar graphs---referred to as \emph{optimal
    $2$-planar}---achieve this bound. However, in strong contrast to
  maximal planar graphs, a maximal $2$-planar graph may have fewer
  than the maximum possible number of edges. In this paper, we
  determine the minimum edge density of maximal $2$-planar graphs by
  proving that every maximal $2$-planar graph on $n\ge 5$~vertices has
  at least~$2n$ edges. We also show that this bound is tight, up to an
  additive constant. The lower bound is based on an analysis of the
  degree distribution in specific classes of drawings of the
  graph. The upper bound construction is verified by carefully
  exploring the space of admissible drawings using computer support.

\keywords{k-planar graphs \and local crossing number \and saturated graphs \and beyond-planar graphs.}
\end{abstract}
\section{Introduction}

Maximal planar graphs a.k.a.~(combinatorial) triangulations are a
rather important and well-studied class of graphs with a number of
nice and useful properties. To begin with, the number of edges is
uniquely determined by the number of vertices, as every maximal planar
graph on~$n\ge 3$ vertices has~$3n-6$ edges. It is natural to wonder
if a similar statement can be made for the various families of
near-planar graphs, which have received considerable attention over
the past decade; see,
e.g.~\cite{dlm-sgdbp-20,ht-bpg-20}. 

In this paper we focus on $k$-planar graphs, specifically
for~$k=2$. These are graphs with local crossing number at most~$k$,
that is, they admit a drawing in~$\R^2$ where every edge has at
most~$k$ crossings. The class of~$1$-planar graphs was introduced by
Ringel~\cite{Ringel65} in the context of vertex-face colorings of
planar graphs. Later, Pach and T{\'o}th~\cite{PachT97} used upper
bounds on the number of edges in~$k$-planar graphs to derive an
improved version of the Crossing Lemma, which gives a lower bound on
the crossing number of a simple (no loops or multi-edges) graph in
terms of its number of vertices and edges. The class of~$k$-planar
graphs is not closed under edge contractions and already for~$k=1$
there are infinitely many minimal non-$1$-planar graphs, as shown by
Korzhik~\cite{k-mn1pg-08}.

The maximum number of edges in a~$k$-planar graph on~$n$ vertices
increases with~$k$, but the exact dependency is not known. A general
upper bound of~$O(\sqrt{k}n)$ is known due to Ackerman and Pach and
T{\'oth}~\cite{ackerman2019topological,PachT97} for graphs that admit
a \emph{simple} $k$-plane drawing, that is, a drawing where every pair
of edges has at most one common point. Only for small~$k$ we have
tight bounds. A $1$-planar graph on~$n$~vertices has at most~$4n-8$
edges and there are infinitely many \emph{optimal}~$1$-planar graphs
that achieve this bound, as shown by Bodendiek, Schumacher, and
Wagner~\cite{bsw-bsgr-83}. 
A $2$-planar graph on~$n$~vertices has at most~$5n-10$ edges and there
are infinitely many \emph{optimal}~$2$-planar graphs that achieve this
bound, as shown by Pach and T{\'o}th~\cite{PachT97}. In fact, there
are complete characterizations, for optimal $1$-planar graphs by
Suzuki~\cite{s-o1pgtos-10} and for optimal $2$-planar graphs by Bekos,
Kaufmann, and Raftopoulou~\cite{bekos_optimal2planargraphs}.

Much less is known about \emph{maximal} $k$-planar graphs, that is,
graphs for which adding any edge results in a graph that is
not~$k$-planar anymore. In contrast to planar graphs, where maximal
and optimal coincide, it is easy to find examples of maximal
$k$-planar graphs that are not optimal; a trivial example is the
complete graph~$K_5$. In fact, the difference between maximal and
optimal can be quite large for~$k$-planar graphs, even---perhaps
counterintuitively---maximal~$k$-planar graphs for~$k\ge1$ may have
fewer edges than maximal planar graphs on the same number of vertices.
Hud{\'a}k, Madaras, and Suzuki~\cite{hms-pm1pg-12} describe an
infinite family of maximal $1$-planar graphs with only
${8n}/{3}+O(1) \approx 2.667n$ edges. An improved construction with
${45n}/{17}+O(1) \approx 2.647n$ edges was given by Brandenburg,
Eppstein, Glei{\ss}ner, Goodrich, Hanauer, and
Reislhuber~\cite{brandenburg_1planar} who also established a lower
bound by showing that every maximal $1$-planar graph has at least
${28n}/{13}-O(1) \approx 2.153n$ edges. Later, this lower bound was
improved to~$20n/9\approx 2.22n$ by Bar\'{a}t and
T\'{o}th~\cite{BaratT18}.

Maximal $2$-planar graphs were studied by Auer, Brandenburg,
Glei{\ss}ner, and Hanauer~\cite{brandenburg_2planar} who constructed
an infinite family of maximal $2$-planar graphs with~$n$ vertices
and~${387n}/{147}+O(1) \approx 2.63n$ edges.\footnote{Maximality is
  proven via uniqueness of the $2$-plane drawing of the
  graph. However, there is no explicit proof of the uniqueness in this
  short abstract.}  We are not aware of any nontrivial lower bounds on
the number of edges in maximal $k$-planar graphs, for~$k\ge 2$.

\paragraph*{Results.} In this paper, we give tight bounds on the minimum number of edges in maximal~$2$-planar graphs, up to an additive constant.

\begin{restatable}{theorem}{thmmain}\label{thm:main}
  Every maximal $2$-planar graph on $n\ge 5$~vertices has at least~$2n$ edges.
\end{restatable}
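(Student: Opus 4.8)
The plan is to fix a single well-behaved drawing of the maximal graph and to run a discharging argument over its vertices, assigning to each vertex~$v$ the charge $\deg(v)-4$. Since $\sum_v(\deg(v)-4)=2|E|-4n$, the theorem is equivalent to showing that the total charge is nonnegative, i.e.\ that the surplus of high-degree vertices always pays for the deficit of the vertices of degree at most three. So the whole argument reduces to controlling the local neighborhoods of low-degree vertices and exhibiting, for each of them, enough nearby excess degree to discharge onto it.

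First I would normalize the drawing. Among all $2$-plane drawings of $G$ I would pick one $D$ that is optimal for a suitable lexicographic criterion, starting with the number of crossings. In a crossing-minimal $2$-plane drawing one may assume the standard good-drawing properties: no edge crosses itself, no two adjacent edges cross, and no two edges cross twice, since two edges crossing twice can be locally uncrossed without increasing the crossing count and such an uncrossing can be carried out while keeping every edge within its budget of two crossings. This turns the picture around each vertex into a clean cyclic sequence of edge-ends separated by faces of the planarization, which is exactly what the degree analysis needs.

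Next I would convert maximality into local, verifiable statements about $D$. The subtle point is that maximality concerns \emph{all} drawings of $G+uv$, whereas $D$ is a single fixed drawing; the bridge is to show that whenever the region around a low-degree vertex $v$ is not fully saturated, one can explicitly produce a $2$-plane drawing of $G+vw$ for a suitable target~$w$, either by extending $D$ along an empty corridor or by a bounded local redrawing. Running this contrapositive, a vertex of small degree forces many edges to pass close to it in order to block every potential new edge; because each blocking edge already carries up to two crossings, the endpoints supplying these shielding edges are forced to have comparatively large degree, which is what funds the discharging. For degrees $0$ and $1$ this yields an immediate contradiction when $n\ge 5$, so the minimum degree is at least two, and a degree-two vertex is tightly constrained as well.

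The heart of the proof, and the main obstacle, is the analysis of degree-three vertices, which carry charge $-1$ and form the densest source of deficit. I would classify such a vertex by how crowded the drawing is in its immediate vicinity, into a \trivialDegreeThree case, a \intermediateDegreeThree case, and a \difficultDegreeThree case, according to how many edges cross into the faces incident to $v$ and how the three incident edges are themselves crossed. In the sparser cases the blocking edges are few and one readily locates neighbors with enough surplus degree to send charge to $v$; the \difficultDegreeThree case, where many edges are packed around $v$, requires a careful combinatorial case distinction on the rotation at $v$ and on the crossing pattern of its incident edges, both to identify the donors of charge and to verify that no vertex is asked to donate more than its surplus. Getting this case analysis to close, so that the accounting yields total charge exactly nonnegative rather than merely $-O(1)$, is where the bulk of the work lies, and it is precisely this tightness that matches the $2n+O(1)$ upper-bound construction.
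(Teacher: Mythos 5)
Your overall strategy is the paper's: fix a crossing-minimal (in fact lexicographically minimal) simple $2$-plane drawing, reduce the bound to showing that the average degree is at least four, and pay for the deficit of degree-two and degree-three vertices out of the surplus of their high-degree neighbours, with the degree-three analysis split by the crossing pattern of the incident edges. But the proposal stops exactly where the proof begins: everything after ``the bulk of the work lies here'' is the theorem. Two of the deferred steps are not routine case analyses but require ideas that the sketch does not contain. First, \emph{conflict resolution}: several low-degree vertices can target the same donor, and the surplus $\deg(v)-4$ is not locally sufficient to serve them all. The paper handles this by charging \emph{halfedges} rather than charge units (so a degree-$i$ vertex can serve about $\lfloor i/3\rfloor$ claimants), proving that every edge is contested by at most two low-degree vertices via a cut-and-paste redrawing argument, and then running a greedy resolution with a separate parity argument for odd cycles of mutually contesting degree-three vertices. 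Your single sentence ``verify that no vertex is asked to donate more than its surplus'' is where all of this lives, and it is not clear that a pointwise discharging rule (each low-degree vertex picks one donor) can be made consistent without some such global tie-breaking scheme.

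Second, the \emph{degree-two vertex with a degree-four neighbour}: a hermit carries deficit $2$, but its neighbours are only guaranteed degree at least four, and a degree-four neighbour has zero surplus under your charge function, so the charge cannot be collected locally. The paper must prove a dedicated structural lemma that in this configuration the hermit's \emph{other} neighbour has degree at least six, and it routes the claim of the degree-four vertex onward to that vertex; nothing in your scheme provides for forwarding charge through a zero-surplus vertex, nor for the companion fact (also needed) that a degree-five vertex, whose surplus is exactly $1$, is never simultaneously targeted by a hermit and a degree-three vertex. These are genuine gaps rather than omitted routine details: without them the accounting does not close, and closing them is where the paper spends most of its roughly twenty lemmas.
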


\begin{restatable}{theorem}{thmupper}\label{thm:upper}
  There exists a constant~$c\in\N$ such that for every~$n\in\N$ there
  exists a maximal $2$-planar graph on~$n$ vertices with at
  most~$2n+c$ edges.
\end{restatable}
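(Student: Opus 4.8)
The plan is to exhibit an explicit infinite family of graphs $G_n$, built by tiling a small rigid gadget, whose edge count is $2n+O(1)$ and whose maximality can be certified by a finite case analysis. Since Theorem~\ref{thm:main} already forces at least $2n$ edges, a matching construction with $2n+c$ edges pins down the minimum density up to the additive constant~$c$.

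First I would design a constant-size gadget $H$: a $2$-planar graph on $k$ vertices admitting a $2$-plane drawing in which every bounded region is already \emph{saturated}, meaning no chord can be routed through it without giving some edge a third crossing. The gadget should expose a few boundary vertices through which copies can be linked. Chaining $m$ copies of $H$ in a cyclic, necklace-like fashion---identifying or joining consecutive boundary interfaces---yields a graph $G_n$ on $n=mk+O(1)$ vertices. If $H$ contributes roughly $2k$ edges and each interface adds only $O(1)$ edges, then $|E(G_n)|=2n+O(1)$, so $c$ is an absolute constant independent of $n$. Values of $n$ that are not of this form are handled by padding with a bounded extra gadget, and the ratio is tuned by choosing the sparsest gadget whose interior stays blocked.

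The crux is maximality, which is a property of the graph and not of any single drawing: to show that $G_n+e$ is not $2$-planar for every non-edge $e$, I must rule out \emph{all} $2$-plane drawings of $G_n+e$, not merely inspect the one drawing I constructed. The route I would take, following the strategy hinted at for the earlier $2$-planar construction, is to prove that $G_n$ has an essentially unique $2$-plane drawing $\Gamma$, up to homeomorphism and the graph's symmetries. Once uniqueness is established, maximality reduces to a local check in $\Gamma$: for each pair of non-adjacent vertices, verify that every region separating them is saturated, so any connecting curve must either cross at least three edges or overload an existing one. Because $G_n$ is assembled from identical gadgets, it suffices to verify saturation for the finitely many congruence classes of non-edges arising within a single gadget and across one interface between adjacent gadgets.

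The main obstacle is precisely this uniqueness (or, failing that, a direct saturation) claim, since the space of $2$-plane drawings of even a fixed finite graph can be large and is governed by subtle interactions between the rotation system and the crossing pattern. Here I would rely on computer support: enumerate the admissible combinatorial drawings of the relevant bounded-size subconfigurations---fixing the rotation at each vertex together with the allowed crossing incidences along each edge---and check that every branch either collapses to the intended local structure of $\Gamma$ or already violates the ``at most two crossings per edge'' constraint. A careful encoding of that constraint keeps the search finite, and the delicate point is to bound the search so that certifying one gadget together with its interface to a neighbor suffices to certify maximality of the entire infinite family.
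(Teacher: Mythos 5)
Your overall strategy---a periodic construction with $2n+O(1)$ edges, maximality via an essentially unique $2$-plane drawing, computer-assisted enumeration, and padding for the remaining values of $n$---is exactly the strategy of the paper. But two steps that you leave as ``design choices'' or ``delicate points'' are where the actual mathematical content lives, and as stated your plan has genuine gaps there. First, there is a tension you do not resolve between sparsity and rigidity of the repeating unit: a gadget $H$ on $k$ vertices with only about $2k$ edges is far too sparse to have a saturated interior or an essentially unique drawing on its own (rigid gadgets tend to look like the paper's $X\simeq K_9\setminus(K_2+K_2+P_3)$, i.e.\ near-complete). The paper's construction resolves this by \emph{not} making each tile rigid: the repeating unit is a sparse ten-cycle joined to its neighbor by a braided matching (exactly $20$ edges per $10$ vertices, which is what yields density $2n$), and rigidity is imposed globally by a constant number of dense gadgets attached only to the two end cycles, whose edges thereby become uncrossable. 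A necklace of individually rigid-and-sparse gadgets, as you describe, is not obviously realizable.

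Second, your claim that ``certifying one gadget together with its interface to a neighbor suffices to certify maximality of the entire infinite family'' is precisely the step that needs proof, and it is not how the paper closes the argument. A $2$-plane drawing of $G_n$ need not be locally standard, so a check of one tile and one interface does not exhaust the drawing space. The paper instead enumerates \emph{all} partial $2$-plane drawings iteratively, layer by layer, and makes this finite for an infinite family by (a) discarding partial drawings that admit no ``potential final face'' (a necessary condition certified by a network-flow computation in the dual), (b) classifying faces as active, passive, transit, or irrelevant and excising the irrelevant ones (justified by separate lemmas showing future vertices and edges cannot enter them), and (c) testing the resulting reduced drawings for isomorphism against those already found, so that the process terminates once the set of reduced drawings stabilizes (at $86$ drawings, exactly one of which extends to an uncrossed final cycle). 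Without this pruning-and-stabilization machinery, or some substitute for it, your finite case analysis does not cover all drawings of $G_n$ and the maximality claim is unproven.
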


\paragraph*{Related work.} Maximality has also been studied for
drawings of simple graphs. Let~$\mathcal{D}$ be a class of drawings. A
drawing~$D\in\mathcal{D}$ is \emph{saturated} if no edge can be added
to~$D$ so that the resulting drawing is still in~$\mathcal{D}$. For
the class of simple drawings, Kyn\v{c}l, Pach, Radoi\v{c}i{\'c} and
T{\'o}th~\cite{KYNCL2015295} showed that every saturated drawing
on~$n$ vertices has at least~$1.5n$ edges and there exist saturated
drawings with no more than~$17.5n$ edges. The upper bound was improved
to~$7n$ by Hajnal, Igamberdiev, Rote and Schulz~\cite{hirs-sstgfe-18}.
Chaplick, Klute, Parada, Rollin, and Ueckerdt~\cite{ckpru-eskd-21}
studied saturated $k$-plane drawings, for~$k\ge 4$, and obtained tight
bounds linear in~$n$, where the constant depends on~$k$, for various
types of crossing restrictions. For the class of $1$-plane drawings,
Brandenburg, Eppstein, Glei{\ss}ner, Goodrich, Hanauer, and
Reislhuber~\cite{brandenburg_1planar} showed that there exist
saturated drawings with no more than~$7n/3 + O(1)\approx 2.33n$ edges.
On the lower bound side, the abovementioned bound
of~$20n/9\approx 2.22n$ edges by Bar\'{a}t and
T\'{o}th~\cite{BaratT18} actually holds for saturated $1$-plane
drawings. For the class of $2$-plane drawings, Auer, Brandenburg,
Glei{\ss}ner, and Hanauer~\cite{brandenburg_2planar} describe
saturated drawings with no more than~$4n/3 + O(1)\approx 1.33n$ edges,
and Bar\'{a}t and T\'{o}th~\cite{bt-s2dfe-21} show that every
saturated $2$-plane drawing on~$n$ vertices has at least~$n-1$ edges.

Although the general spirit is similar, saturated drawings are quite
different from maximal abstract graphs. To obtain a sparse saturated
drawing, one can choose both the graph and the drawing, whereas for
sparse maximal graphs one can choose the graph only and needs to get a
handle on all possible drawings. Universal lower bounds for saturated
drawings carry over to the maximal graph setting, and existential
upper bounds for maximal graphs carry over to saturated drawings. But
bounds obtained in this fashion are far from tight usually; compare,
for instance, the range of between~$n$ and~$4n/3$ edges for saturated
$2$-plane drawings to our bound of~$2n$ edges for maximal $2$-planar
graphs.


\section{Preliminaries}

A \emph{drawing} of a graph~$G=(V,E)$ is a map~$\gamma:G\to\R^2$ that maps each vertex~$v\in V$ to a point~$\gamma(v)\in\R^2$ and each each edge~$uv\in E$ to a simple (injective) curve~$\gamma(uv)$ with endpoints~$\gamma(u)$ and~$\gamma(v)$, subject to the following conditions: (1)~$\gamma$ is injective on~$V$; (2)~for all~$uv\in E$ we have~$\gamma(uv)\cap\gamma(V)=\{\gamma(u),\gamma(v)\}$; and (3)~for each pair~$e_0,e_1\in E$ with~$e_0\ne e_1$ the curves~$\gamma(e_0)$ and~$\gamma(e_1)$ have at most finitely many intersections, and each such intersection is either a common endpoint or a proper, transversal \emph{crossing} (that is, no touching points between these curves). The connected components of~$\R^2\setminus\gamma(G)$ are the \emph{faces} of~$\gamma$. The \emph{boundary} of a face~$f$ is denoted by~$\partial f$.

To avoid notational clutter we will often identify vertices and edges with their geometric representations in a given drawing. A drawing is \emph{simple} if every pair of edges has at most one common point. A drawing is \emph{$k$-plane}, for~$k\in\N$, if every edge has at most~$k$ crossings. A graph is \emph{$k$-planar} if it admits a~$k$-plane drawing. A graph is \emph{maximal $k$-planar} if no edge can be added to it so that the resulting graph is still~$k$-planar.

To analyze a~$k$-planar graph one often analyzes one of its~$k$-plane drawings. It is, therefore, useful to impose additional restrictions on this drawing if possible. One such restriction is to consider a \emph{crossing-minimal}~$k$-plane drawing, that is, a drawing that minimizes the total number of edge crossings among all~$k$-plane drawings of the graph. For small~$k$, such a drawing is always simple; for~$k\ge 4$ this is not the case in general~\cite[Footnote~112]{schaefer2012graph}.

\begin{lemma}[Pach, Radoi{\v{c}}i{\'{c}}, Tardos, and T{\'{o}}th~{\cite[Lemma~1.1]{PachRTT06}}]\label{thm:2planar_simple}
  For~$k\le 3$, every crossing-minimal $k$-plane drawing is simple.
\end{lemma}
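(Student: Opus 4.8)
The plan is to argue by contradiction: assuming a crossing-minimal $k$-plane drawing $D$ is not simple, I would exhibit a $k$-plane drawing of the same graph with strictly fewer crossings, contradicting crossing-minimality. First I would record that simplicity fails only in one of two essential ways --- two adjacent edges cross, or two independent edges cross at least twice --- since edges are simple curves (so no edge crosses itself) and a point where three edges are concurrent can be removed by a local perturbation that creates no new crossing. In either failure mode there is a \emph{lens}: a closed disk $L$ bounded by an arc $\alpha\subseteq e$ and an arc $\beta\subseteq f$ of two edges $e,f$, where $\alpha$ and $\beta$ meet exactly at two corners, each corner being either a crossing of $e$ and $f$ or a shared endpoint, and where $e$ and $f$ do not cross in the interior of $L$ (take two consecutive crossings along $e$).

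The core tool is the \emph{lens swap}: smooth $e$ and $f$ at the crossing corners so that afterwards $e$ follows $\beta$ and $f$ follows $\alpha$ across $L$; that is, the new edges are $(e \text{ up to the first corner})\cup\beta\cup(e\text{ after the second corner})$ and symmetrically for $f$ (resolving any self-crossing created this way by shortcutting, which only decreases crossings). I would verify three things. First, the swap removes the crossing(s) at the corner(s) and creates no crossing between the two new edges inside $L$, since their interiors were disjoint there; hence the total number of crossings strictly decreases. Second, for every third edge $g$ the total number of crossings of $g$ is unchanged, because the swap merely reassigns each crossing of $g$ with $\{e,f\}$ between the two modified edges; so no third edge can violate $k$-planarity. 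Third, the only edges whose crossing number can increase are $e$ and $f$ themselves.

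The heart of the matter, and the place where $k\le 3$ enters, is controlling this last point. Writing $a$ and $b$ for the numbers of crossings in the interiors of $\alpha$ and $\beta$, and $c$ for the number of crossing-corners of $L$, the swap increases the crossing number of each modified edge by at most $|a-b|-c$, so both edges stay within $k$ as soon as $|a-b|\le c$. Now $k\le 3$ supplies exactly the slack needed. If both corners are crossings (two independent edges crossing twice, or two adjacent edges crossing at least twice), then each arc runs between two $e$--$f$ crossings, so it carries at most $k-2\le 1$ interior crossings; thus $|a-b|\le 1\le 2=c$ and the swap is \emph{always} legal. If one corner is the shared vertex (two adjacent edges crossing exactly once), then each arc carries at most $k-1\le 2$ interior crossings, so $|a-b|\le 2$, and with $c=1$ the swap is legal whenever $|a-b|\le 1$.

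The main obstacle is therefore the single remaining case: two adjacent edges crossing exactly once with $\{a,b\}=\{0,2\}$, where the symmetric swap would push one edge to $k+1$ crossings. Here I would choose $L$ innermost (minimal area). If no vertex lies inside $L$, then every third edge meeting the interior of $L$ must cross the heavy arc $\beta$, because the light arc $\alpha$ is crossing-free, and must leave $L$ again through $\beta$; since $\beta$ carries only two crossings, a single edge $h$ crosses $\beta$ twice. Then $h$ and $f$ cross twice and bound a smaller lens with two crossing-corners, which can be swapped cleanly by the previous paragraph, reducing crossings --- a contradiction. The genuinely delicate situation is a vertex lying inside $L$: this is where I expect the case analysis to be most involved, and where minimality of the lens, together with tracking how the edges incident to such a vertex are forced to traverse $\beta$, must again be used to expose a smaller reducible lens. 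Once every case produces a crossing-reducing $k$-plane redrawing, the contradiction with crossing-minimality shows that $D$ is simple.
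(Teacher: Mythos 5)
This is one of the few statements in the paper with no in-paper proof: the lemma is imported verbatim from Pach, Radoi\v{c}i\'c, Tardos and T\'oth, so your proposal has to be judged on its own rather than against an argument in the text. Its skeleton is the standard one and is correct as far as it goes: reduce non-simplicity to the existence of a lens, swap the two bounding arcs, observe that every third edge keeps its total number of crossings because the swap merely reassigns its crossings between the two modified curves, and check legality via $|a-b|\le c$. For $k\le 2$ this is already a complete proof (every arc carries at most $k-1\le 1$ interior crossings, so $|a-b|\le 1\le c$ in every configuration), and $k\le 2$ is in fact the only setting in which the paper ever invokes the lemma. You also correctly isolate the one genuinely hard configuration for $k=3$: two adjacent edges crossing once with $\{a,b\}=\{0,2\}$.

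That configuration, however, is not closed, and in two concrete ways. First, in your ``no vertex inside $L$'' subcase, the conclusion that a single edge $h$ crosses $\beta$ twice does not follow: a third edge may enter the lens \emph{through the shared endpoint} $v$ (being incident to $v$) and leave through $\beta$ with only one crossing, so the two interior crossings of $\beta$ can be realized by two distinct edges $g_1,g_2$, both incident to $v$, and then no two-crossing lens between a single edge and $f$ exists. This subcase is rescuable---each $g_i$ is adjacent to $f$ and crosses it, giving a smaller adjacent-crossing lens inside $L$, and innermost-ness forces $g_1,g_2$ not to cross inside $L$, which makes that smaller lens legally swappable---but none of this is in your write-up. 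Second, the ``vertex inside $L$'' subcase is not just delicate; your proposed remedy of ``exposing a smaller reducible lens'' has nothing to grab onto in, say, the configuration where a single degree-two vertex $w$ sits inside $L$ and its two edges each cross $\beta$ once on their way to endpoints outside $L$: both crossings of $\beta$ are with edges \emph{independent} of $f$ that cross it only once, so there is no further simplicity violation and hence no lens anywhere inside $L$, while the swap of $L$ itself is exactly the forbidden one. Ruling this out requires a different move altogether (for instance, relocating $w$ to just outside $\beta$ and rerouting its two edges along the now-uncrossed stretch of $\beta$ between their former crossing points, which deletes two crossings), and this is precisely where the published proof does its real work. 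As it stands, your proposal is an accurate road map on which the hardest stretch of road is missing.
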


In figures, we use the following convention to depict edges: Uncrossed
edges are shown green, singly crossed edges are shown purple, doubly
crossed edges are shown blue, and edges for which the number of
crossings is undetermined are shown black.

\paragraph*{Connectivity.} Next let us collect some basic
properties of maximal~$k$-planar graphs and their drawings. Some of
these may be folklore, but for completeness we include the (simple)
proofs in \cref{sec:basic}. (In the PDF, the blue triangle pointing up
links to the statement in the main text and the red triangle pointing
down links to the proof in the appendix.)

\begin{restatable}{lemma}{UncrossedEdge}\label{lem:uncrossed_edge}
  \renewcommand{\mylink}{\statlink{lem:uncrossed_edge}\prooflink{PUncrossedEdge}}
  Let $D$ be a crossing-minimal $k$-plane drawing of a maximal
  $k$-planar graph $G$, and let~$u$ and~$v$ be two vertices that lie
  on (the boundary of) a common face in~$D$. Then $uv$ is an edge
  of~$G$ and it is uncrossed in~$D$.
\end{restatable}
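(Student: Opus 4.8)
The plan is to exploit the common face~$f$ by routing a curve from~$u$ to~$v$ through its interior. Since~$f$ is an open, connected (hence path-connected) region of~$\R^2\setminus\gamma(G)$ and both~$u$ and~$v$ lie on~$\partial f$, there is a simple curve~$c$ with endpoints~$u$ and~$v$ whose relative interior lies entirely inside~$f$. Concretely, one picks points~$u',v'\in f$ close to~$u$ and~$v$, joins them by a path inside~$f$, attaches short arcs from~$u$ to~$u'$ and from~$v$ to~$v'$ reaching into~$f$, and perturbs the result to be simple. Because the relative interior of~$c$ avoids~$\gamma(G)$ altogether, the curve~$c$ passes through no vertex other than its endpoints and crosses no edge of~$G$.

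First I would show that~$uv$ is an edge of~$G$. Suppose not. Then~$u\ne v$ and~$uv\notin E$, so adding~$uv$ to~$G$ yields a simple graph~$G+uv$, and drawing the new edge as the curve~$c$ extends~$D$ to a drawing of~$G+uv$. Since~$c$ is crossing-free and no existing edge gains a crossing, every edge still has at most~$k$ crossings, so this is a~$k$-plane drawing. Hence~$G+uv$ is~$k$-planar, contradicting the maximality of~$G$. Therefore~$uv\in E$.

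Next I would show that~$uv$ is uncrossed. Suppose it is crossed in~$D$. Obtain a new drawing~$D'$ from~$D$ by deleting the curve~$\gamma(uv)$ and redrawing~$uv$ as~$c$. Rerouting affects no other edge, except that edges previously crossed by~$\gamma(uv)$ lose those crossings; thus no edge gains a crossing and~$D'$ is again a~$k$-plane drawing of~$G$. However, the old curve~$\gamma(uv)$ carried at least one crossing while~$c$ carries none, so~$D'$ has strictly fewer crossings than~$D$, contradicting the crossing-minimality of~$D$. Hence~$uv$ is uncrossed in~$D$.

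The main obstacle is the topological step of producing the simple crossing-free curve~$c$ through~$f$ and verifying that it meets~$\gamma(G)$ only at~$u$ and~$v$; everything else follows from the two standard contradictions with maximality and crossing-minimality. Note that the argument uses neither the simplicity of~$D$ nor any restriction on~$k$, so it applies to all~$k\in\N$.
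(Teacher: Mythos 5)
Your proof is correct and follows exactly the paper's argument: draw $uv$ as an uncrossed curve through the common face, deriving a contradiction with maximality if $uv\notin E$ and with crossing-minimality if $uv$ is crossed. The only difference is that you spell out the topological construction of the curve in more detail than the paper, which simply asserts that an uncrossed edge can be added inside the common face.
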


\begin{restatable}{lemma}{VertexIncidentToUncrossedEdge}\label{lem:vertex_incident_to_uncrossededge}
  \renewcommand{\mylink}{\statlink{lem:vertex_incident_to_uncrossededge}\prooflink{PVertexIncidentToUncrossedEdge}}
  Let $D$ be a crossing-minimal $k$-plane drawing of a maximal $k$-planar graph on~$n$ vertices, for~$k\le 2\le n$. Then every vertex is incident to an 
  uncrossed edge in~$D$.
\end{restatable}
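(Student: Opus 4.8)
The plan is to exhibit an uncrossed edge at every vertex directly, combining \cref{thm:2planar_simple} with the local structure at a first crossing. Since $k\le 2\le 3$, \cref{thm:2planar_simple} guarantees that $D$ is simple; in particular any two edges cross at most once and adjacent edges do not cross. Fix a vertex $v$. We may assume $\deg(v)\ge 1$: otherwise $v$ is isolated, but then placing $v$ inside a face incident to any other vertex $u$ (which exists since $n\ge 2$) and joining them by a crossing-free curve yields a $2$-plane drawing of $G+vu$, contradicting maximality. If some edge incident to $v$ is uncrossed, we are immediately done, so assume every edge incident to $v$ is crossed, and fix one such edge $e=va$. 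Let $x$ be the crossing on $e$ closest to $v$, and let $g$ be the edge crossing $e$ at $x$; by the choice of $x$, the sub-arc of $e$ between $v$ and $x$ is crossing-free.

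The crucial step exploits $2$-planeness applied to the crossing partner $g$. The edge $g$ carries at most two crossings in total, one of which is $x$, so $x$ splits $g$ into two arcs of which at most one contains a further crossing. Hence at least one of these arcs---say the one ending at an endpoint $b$ of $g$---is crossing-free. Consider the face $F$ of $D$ at the corner of $x$ bounded by the crossing-free arc of $e$ from $x$ to $v$ and the crossing-free arc of $g$ from $x$ to $b$; these two branches are rotationally adjacent at the transversal crossing $x$, so they really do bound a common face corner. Following $\partial F$ along $e$ from $x$ reaches $v$, and following $\partial F$ along $g$ from $x$ reaches $b$, since both arcs are crossing-free; thus $v$ and $b$ both lie on $\partial F$. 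As $g$ crosses $e$ it is not adjacent to $e$, so $b\ne v$. By \cref{lem:uncrossed_edge}, $vb$ is therefore an edge of $G$ that is uncrossed in $D$ and incident to $v$, as desired.

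The heart of the argument---and the only place where the value $k=2$ is used---is the observation that $g$ cannot be crossed on both sides of $x$: with at most two crossings per edge, one endpoint of $g$ is always reachable from $x$ along a crossing-free arc, which is exactly what produces a face shared between $v$ and a genuine vertex. The remaining points are routine: that the branches toward $v$ and toward $b$ are rotationally adjacent at $x$ (immediate from transversality), that walking along a crossing-free arc of $\partial F$ terminates at a vertex rather than at a further crossing, and the degenerate bookkeeping for the isolated-vertex case noted above. I anticipate no genuine obstacle beyond setting up this local picture at the first crossing $x$ correctly and confirming that the two relevant arcs are indeed crossing-free.
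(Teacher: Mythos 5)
Your proof is correct and follows essentially the same route as the paper: take the first crossing $x$ along a crossed edge at $v$, use $2$-planeness to find a crossing-free arc of the crossing partner $g$ from $x$ to an endpoint $b$, conclude that $v$ and $b$ share a face, and invoke \cref{lem:uncrossed_edge}. The only cosmetic difference is that you handle the isolated-vertex case by a direct maximality argument where the paper cites its connectivity lemma (\cref{lem:connected}), which is the same reasoning.
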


\begin{restatable}{lemma}{TwoConnected}\label{thm:2connected}
  \renewcommand{\mylink}{\statlink{thm:2connected}\prooflink{PTwoConnected}}
  For~$k\le 2$, every maximal $k$-planar graph on~$n\ge 3$ vertices is $2$-connected.
\end{restatable}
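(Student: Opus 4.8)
The plan is to prove 2-connectedness in two stages: first show that a maximal $2$-planar graph on $n \ge 3$ vertices is connected, and then show it has no cut vertex. Throughout I would fix a crossing-minimal $k$-plane drawing $D$ of the maximal $k$-planar graph $G$, which by \cref{thm:2planar_simple} is simple, and I would exploit \cref{lem:uncrossed_edge}: any two vertices lying on a common face must already be adjacent (via an uncrossed edge). This lemma is the workhorse, since it converts geometric adjacency (sharing a face) into combinatorial adjacency, and it is exactly what lets us derive contradictions from the existence of edges that \emph{could} be added.

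For connectivity, suppose for contradiction that $G$ is disconnected, and consider its drawing $D$. Pick two distinct connected components $C_1$ and $C_2$. Since the drawing lives in the plane, some vertex $u$ of one component and some vertex $v$ of another component must lie on a common face of $D$ — intuitively, the outer boundary of one component's drawing faces the region containing the other component, so one can find witnesses $u \in C_1$ and $v \in C_2$ incident to a shared face. By \cref{lem:uncrossed_edge}, $uv$ is then an edge of $G$, contradicting that $u$ and $v$ lie in different components. Hence $G$ is connected. The step requiring care here is the \emph{topological} claim that two distinct components always have vertices on a common face; I would justify it by taking a point $p$ in the interior of the face of $D$ containing (the drawing of) $C_2$ that also touches $C_1$, or more cleanly by a continuity/shrinking argument: walk along a curve from a vertex of $C_1$ to a vertex of $C_2$ avoiding all other vertices, and observe that the first and last faces it meets share vertices from distinct components.

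For the absence of cut vertices, suppose $w$ is a cut vertex, so $G - w$ splits into components, at least two of which, say $A$ and $B$, are nonempty. In the drawing $D$, removing $w$ and its incident edges disconnects $A$ from $B$ combinatorially, but the plane is still connected, so there should again be a face of $D$ incident to a vertex $a \in A$ and a vertex $b \in B$ (with $a, b \ne w$). Applying \cref{lem:uncrossed_edge} to $a$ and $b$ yields an edge $ab$ of $G$, which contradicts the separation since any $A$–$B$ path would have to pass through $w$. The delicate point is ensuring the shared face avoids being ``blocked'' solely by $w$: I would argue that since each of $A$ and $B$ has a vertex on the boundary of the face(s) surrounding $w$, and these boundaries are traversed without passing through a second cut, one can locate $a$ and $b$ on a common face distinct from merely being adjacent to $w$. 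Here the hypothesis $n \ge 3$ guarantees there is enough structure (at least two nontrivial sides) for the argument to bite.

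The main obstacle I anticipate is making the topological face-sharing arguments rigorous rather than merely intuitive. The assertion that two separated parts of a plane drawing must meet on a common face is geometrically obvious but requires a careful formalization — for instance, via the Jordan curve theorem applied to a cycle or closed walk bounding a region, or via a local rotation-system argument tracing the boundary walk of the relevant face. In particular for the cut-vertex case, one must rule out the degenerate possibility that the only ``contact'' between $A$ and $B$ is through $w$ itself, so that the witnesses $a$ and $b$ genuinely share a face not pinched off by $w$. I would handle this by examining the faces incident to $w$: if $w$ separates $A$ from $B$, then in the cyclic order of edges around $w$ (or around the faces bounding the two sides), there is a face whose boundary walk visits both an $A$-vertex and a $B$-vertex, and \cref{lem:uncrossed_edge} then forces the forbidden edge. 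Once this geometric lemma about shared faces is pinned down, both parts of the proof follow uniformly and quickly.
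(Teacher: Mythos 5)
There is a genuine gap, and it is exactly at the point you flag as ``delicate.'' Your argument rests on the topological claim that in a fixed drawing~$D$ of~$G$, two components (or the two sides of a cut vertex~$w$) must have vertices on a common face. This is false for drawings with crossings: the boundary of a face of a $2$-plane drawing can consist entirely of arcs of crossed edges and crossing points, with no vertices on it at all, so one side of the cut can be enclosed by a ``ring'' of doubly crossed edges and share no face with the other side. (The paper's own upper-bound construction exploits precisely this: non-adjacent vertices separated by a cycle of doubly crossed edges.) Your fallback via the rotation at~$w$ does not repair this, because the face lying between two consecutive edges in the rotation at~$w$ need not have any vertex other than~$w$ on its boundary. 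Note also that your argument never uses the hypothesis~$k\le 2$, which should be a warning sign; the restriction is essential to the paper's proof.

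The paper's proof avoids the false claim by \emph{redrawing} rather than analyzing a fixed drawing of~$G$. For a cut vertex~$v$ with one component~$A$ of~$G-v$ and~$B=V\setminus A$, it takes separate crossing-minimal $k$-plane drawings~$D_0$ of~$G[A\cup\{v\}]$ and~$D_1$ of~$G[B]$, invokes \cref{lem:vertex_incident_to_uncrossededge} (this is where~$k\le 2$ enters) to find an uncrossed edge~$vu_i$ in each~$D_i$, and then glues the two drawings at~$v$ inside private halfplanes so that both~$vu_0$ and~$vu_1$ lie on the outer face of the combined drawing. The uncrossed edge~$u_0u_1$ can then be added, contradicting maximality. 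Connectivity (\cref{lem:connected}) is handled the same way: one component's drawing is placed inside a chosen face of the other's. To fix your proof you would need to replace the shared-face claim with this kind of explicit reconstruction of a drawing in which the desired pair of vertices provably shares a face.
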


\section{The Lower Bound}
\label{sec:low_deg_vertices}

In this section we develop our lower bound on the edge density of
maximal $2$-planar graphs by analyzing the distribution of vertex
degrees. As we aim for a lower bound of~$2n$ edges, we want to show
that the average vertex degree 
is at least four. Then, the density bound follows by the handshaking
lemma.  However, maximal $2$-planar graphs may contain vertices of
degree less than four. By \cref{thm:2connected} we know that the
degree of every vertex is at least two. But degree-two vertices,
so-called \emph{hermits}, may exist, as well as vertices of degree
three.

In order to lower bound the average degree by four, we employ a
charging scheme where we argue that every \emph{low-degree} vertex,
that is, every vertex of degree two and three claims a certain number
of halfedges at an adjacent \emph{high-degree} vertex, that is, a
vertex of degree at least five. Claims are exclusive, that is, every
halfedge at a high-degree vertex can be claimed at most once. We use
the term \emph{halfedge} because the claim is not on the whole edge
but rather on its incidence to one of its high-degree endpoints. The
incidence at the other endpoint may or may not be claimed
independently (by another vertex). For an edge~$uv$ we denote
by~$\half{uv}$ the corresponding halfedge at~$v$ and by~$\half{vu}$
the corresponding halfedge at~$u$. A halfedge~$\half{uv}$ inherits the
properties of its \emph{underlying} edge~$uv$, such as being crossed
or uncrossed in a particular drawing.
Vertices of degree four have a special role, as they are neither low--
nor high-degree. However, a vertex of degree four that is adjacent to
a hermit is treated like a low-degree vertex. More precisely, our
charging scheme works as follows:
\begin{enumerate}[label=(C\arabic*),left=\labelsep]
\item\label{c:1} Every hermit claims two halfedges at each high-degree
  neighbor.
\item\label{c:2} Every degree-three vertex claims three halfedges at
  some high-degree neighbor.
\item\label{c:3} Every degree four vertex that is adjacent to a
  hermit~$h$ claims two halfedges at some
  neighbor~$v$ of degree $\geq 6$. Further, the vertices~$h$ and~$v$ are adjacent, 
  so~$h$ also claims two halfedges at~$v$
  by~\ref{c:1}. If~$\deg(v)=6$, then~$v$ is adjacent to exactly one
  hermit.
\item\label{c:4} At most one vertex claims (one or more) halfedges at a degree five
  vertex.
\end{enumerate}

The remainder of this section is organized as follows. First, we
present the proof of \cref{thm:main} in \cref{sec:LP}. Then we prove
the validity of our charging scheme along with some useful properties
of low-degree vertices in
\cref{sec:remarks}--\ref{sec:charge}. Specifically, we will use the
following statements in the proof of \cref{thm:main} below.

\begin{restatable}{lemma}{EfficientHermitNeighbors}\label{lem:efficient_hermit_neighbors}
  \renewcommand{\mylink}{\statlink{lem:efficient_hermit_neighbors}\prooflink{PEfficientHermitNeighbors}}
  Let~$G$ be a maximal $2$-planar graph on~$n\ge 5$ vertices, let~$h$
  be a hermit, and let~$x,y$ be the neighbors of~$h$ in~$G$. Then we
  have~$\deg(x)\ge 4$ and~$\deg(y)\ge 4$.
\end{restatable}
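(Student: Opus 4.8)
By the symmetry of the roles of~$x$ and~$y$, it suffices to prove~$\deg(x)\ge 4$; assume for contradiction that~$\deg(x)\le 3$. Since~$G$ is $2$-connected by \cref{thm:2connected}, we have~$\deg(x)\ge 2$, so~$\deg(x)\in\{2,3\}$. Fix a crossing-minimal $2$-plane drawing~$D$ of~$G$; by \cref{thm:2planar_simple} it is simple. The single tool I would use throughout is the contrapositive of \cref{lem:uncrossed_edge}: if two vertices lie on a common face of~$D$ then they must be adjacent in~$G$ (and joined by an uncrossed edge). Hence every contradiction will be produced by exhibiting two \emph{non}-adjacent vertices that share a face, or an adjacent pair that shares a face but whose connecting edge is crossed. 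By \cref{lem:vertex_incident_to_uncrossededge}, at least one of the two edges~$hx,hy$ is uncrossed, so I may assume~$hx$ is uncrossed and split into cases according to the crossing pattern of~$hy$ (uncrossed, singly crossed, doubly crossed).

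The plan is to dispose of the uncrossed case first, as it is clean and already forces the key structural obstruction. If both~$hx$ and~$hy$ are uncrossed, then a face~$f$ incident to~$h$ in one of the two wedges between these edges has both~$x$ and~$y$ on its boundary (following~$\partial f$ from~$h$ along the two uncrossed edges reaches~$x$ and~$y$). By \cref{lem:uncrossed_edge}, $xy$ is then an edge. Now I use that~$\deg(x)$ is small: writing the second neighbor of~$x$ as~$z$ (when~$\deg(x)=2$), adjacency of~$x$ and~$y$ forces~$y=z$; but~$y=z$ would make~$\{h,x,y\}$ a triangle with~$h,x$ of degree two, so~$y$ would be a cut vertex for~$n\ge 4$, contradicting \cref{thm:2connected}. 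For~$\deg(x)=3$ I would extract the analogous constraint, namely that the neighbourhood of~$x$ is forced to be~$\{h,y,\cdot\}$, and again combine it with the faces visible around~$x$ (each containing only neighbours of~$x$) to reach a contradiction with the existence, guaranteed by~$n\ge 5$, of a vertex not adjacent to~$x$.

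It remains to handle the cases where~$hy$ is crossed. Here I would trace the boundary of the wedge-face~$f$ at~$h$ past the one or two crossings on~$hy$: the crossing edges emanate from other vertices of~$G$, and walking along~$\partial f$ identifies a vertex on the same face as~$x$ (or as~$h$) that, because~$\deg(x)$ is small, cannot be a neighbour of~$x$; \cref{lem:uncrossed_edge} then yields the contradiction. Equivalently, one argues directly from maximality that an edge could be inserted into~$f$ with at most two crossings. I expect the doubly-crossed subcase to be the main obstacle: tracing two consecutive crossings requires pinning down the far endpoints of both crossing edges and verifying that the relevant portions of the boundary bound a single face, which forces a careful local analysis of the at most four edges participating in the two crossings. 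Once the reachable vertices on~$f$ are identified, the small size of~$\deg(x)$ (and of~$\deg(h)=2$) leaves too few admissible neighbours, and the uncrossed-edge lemma closes every branch.
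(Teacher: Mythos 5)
Your overall skeleton (reduce to $x$ by symmetry, assume $\deg(x)\le 3$, work in a crossing-minimal simple drawing) matches the paper, and your disposal of the $\deg(x)=2$ sub-case via a cut vertex at $y$ is essentially the paper's implicit argument. But there is a genuine gap at the heart of the proof: your declared ``single tool'' --- the contrapositive of \cref{lem:uncrossed_edge} --- cannot close the $\deg(x)=3$ case. If $N(x)=\{h,y,a\}$, the observation that every face incident to $x$ carries only neighbours of $x$ is \emph{not} contradicted by the existence of a vertex non-adjacent to $x$; non-neighbours simply do not appear on those faces, and perfectly consistent local pictures of this kind do occur (e.g.\ around T3-1 vertices in valid drawings). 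The paper's proof instead first invokes \cref{lem:hermit_uncrossed} to get that $hx$, $hy$ \emph{and} $xy$ are all uncrossed (so your case split on the crossings of $hy$ is subsumed by a prior lemma you never cite), then shows the third edge $e=xa$ must be crossed, uses \cref{lem:crneighbor} to pin the first crossing edge of $e$ as an edge $yb$ incident to $y$ with an uncrossed arc to the crossing $\chi$, and finally performs a \emph{redrawing}: relocate $x$ and $h$ next to $y$, redraw the triangle $xyh$ uncrossed, and reroute $e$ along $yb$ up to $\chi$. This strictly decreases the number of crossings, contradicting crossing-minimality of $D$ --- not maximality, and not \cref{lem:uncrossed_edge}.

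Your fallback suggestion (``argue directly from maximality that an edge could be inserted'') also does not obviously work here, since the local configuration need not admit any new edge with at most two crossings. Likewise, your plan for the cases where $hy$ is singly or doubly crossed is only a sketch of ``tracing the boundary''; the paper handles these uniformly in \cref{lem:hermit_uncrossed}, again by crossing-reducing redrawings rather than by exhibiting non-adjacent vertices on a common face. In short: the missing idea is that the contradiction for $\deg(x)=3$ (and for crossed hermit edges) comes from a crossing-minimality redrawing, and without it the proposal does not go through.
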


\begin{restatable}{lemma}{HermitDeg}\label{prop:hermit_deg}
  \renewcommand{\mylink}{\statlink{prop:hermit_deg}\prooflink{PHermitDeg}}
  Let~$G$ be a maximal $2$-planar graph on~$n\ge 5$ vertices. Then a
  vertex of degree~$i$ in~$G$ is adjacent to at
  most~$\lfloor i/3\rfloor$ hermits.
\end{restatable}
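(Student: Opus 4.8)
The plan is to fix a crossing-minimal $2$-plane drawing $D$ of $G$, which is simple by \cref{thm:2planar_simple}, and to analyze the cyclic rotation of edges around the degree-$i$ vertex $v$. Write $H$ for the set of hermit neighbors of $v$; the goal is to show $|H|\le\lfloor i/3\rfloor$. Call an edge incident to $v$ a \emph{hermit edge} if its other endpoint lies in $H$. I would reduce the statement to the following combinatorial claim about the rotation at $v$: between any two hermit edges that are consecutive among the hermit edges in the cyclic order at $v$, there are at least two non-hermit edges. Granting this, the $i$ edges at $v$ split into the $|H|$ hermit edges together with at least $2|H|$ further edges distributed among the $|H|$ cyclic gaps between them, so $i\ge 3|H|$ and hence $|H|\le\lfloor i/3\rfloor$, as desired. (This is consistent with \cref{lem:efficient_hermit_neighbors}, which already forbids hermit neighbors at vertices of degree below four.)

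To prove the claim I would argue by contradiction, assuming two hermit edges $vh_1$ and $vh_2$ are consecutive among hermit edges but separated by at most one non-hermit edge in the rotation at $v$. The core idea is to inspect the face(s) of $D$ inside the wedge at $v$ bounded by $vh_1$ and $vh_2$ and to show that $h_1$ and $h_2$ end up on the boundary of a common face. By \cref{lem:uncrossed_edge} this would make $h_1h_2$ an uncrossed edge of $G$. But $h_1$ and $h_2$ are hermits already adjacent to $v$, so $h_1h_2\in E$ forces their neighborhoods to be $\{v,h_2\}$ and $\{v,h_1\}$; then $v$ is a cut vertex separating $\{h_1,h_2\}$ from the remaining $n-3\ge 2$ vertices, contradicting \cref{thm:2connected}.

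The face analysis itself I would split according to the number of edges between $vh_1$ and $vh_2$. If there is none, the wedge at $v$ is locally a single face $f$; I would trace $\partial f$ outward from $v$ along $vh_1$ and along $vh_2$ and argue that the two traces reach $h_1$ and $h_2$, respectively, on the same face. If there is exactly one non-hermit edge $e$ between them, it subdivides the wedge into two sub-wedges, and I would run the same tracing argument in the sub-wedge incident to each $h_j$, again arriving at a common face. Throughout, the degree-two property of the hermits and \cref{lem:vertex_incident_to_uncrossededge} (each hermit is incident to an uncrossed edge) are what keep the relevant faces from closing off prematurely at $h_1$ or $h_2$.

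The main obstacle is exactly this face-tracing in the presence of crossings: a priori, edges crossing into the wedge could subdivide $f$ and separate $h_1$ from $h_2$ before the boundary reaches them. The key leverage is that $D$ is $2$-plane and crossing-minimal, so each of $vh_1$, $vh_2$, and the hermits' second edges carries at most two crossings, which sharply limits how such edges can enter and partition the wedge; combining these crossing bounds with the simplicity of $D$ (no two edges cross twice) should rule out every configuration in which $h_1$ and $h_2$ fail to share a face. I expect the bulk of the work, and the case distinctions, to concentrate here.
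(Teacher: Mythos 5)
Your counting framework is sound in outline, but the combinatorial claim you reduce to is strictly stronger than what is needed, and it is exactly the strengthening that breaks. You require that \emph{every} cyclic gap between consecutive hermit edges at~$v$ contain at least two non-hermit edges. The paper's own analysis only yields this on average: it normalizes the drawing so that for each hermit~$h$ the host edge~$vx$ lies adjacent to~$vh$ in the rotation (possible because both edges at~$h$ are uncrossed by \cref{lem:hermit_uncrossed}, and each edge hosts at most one hermit by \cref{lem:one_hermit_per_edge}), and then shows that each such uncrossed \emph{pair}~$vh,vx$ is flanked on both sides by crossed edges. This gives three edges per hermit by a per-hermit count, but it is perfectly consistent with a rotation such as~$vx_1,\,vh_1,\,c,\,vh_2,\,vx_2,\,c'$ (degree six, two hermits, $c,c'$ crossed): here one gap between the hermit edges contains a single crossed edge~$c$, so your claim fails for that gap while the lemma's bound~$\lfloor 6/3\rfloor=2$ still holds. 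Nothing in your argument rules this configuration out.

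Correspondingly, your proof of the claim collapses in the ``exactly one intervening edge'' case when that edge~$e$ is crossed. Locally at~$v$ the edge~$e$ separates the wedge into two faces, one containing~$h_1$ and one containing~$h_2$ on its boundary, and there is no reason these two faces coincide globally: the crossings on~$e$ can close each face off (indeed, since every vertex on a face containing a hermit must be a neighbor of that hermit by \cref{lem:uncrossed_edge}, each of these faces is confined to~$\{v,h_j,x_j\}$ plus crossing points). So you cannot invoke \cref{lem:uncrossed_edge} to make~$h_1h_2$ an edge. Your zero-edge case is fine (two uncrossed edges consecutive in the rotation do put~$h_1,h_2$ on a common face), and the case where the intervening edge is uncrossed can be handled (its far endpoint would have to host both hermits, contradicting \cref{lem:one_hermit_per_edge}), but the crossed-edge case is a genuine hole. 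To repair the argument you should switch to the paper's decomposition: charge to each hermit the triple consisting of~$vh$, the host edge~$vx$, and one crossed edge separating its pair from the next, rather than demanding two edges in every gap.
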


\subsection{Proof of \cref{thm:main}}\label{sec:LP}

Let~$G$ be a maximal $2$-planar graph on~$n\ge 5$ vertices, and
let~$m$ denote the number of edges in~$G$. We denote by~$v_i$ the
number of vertices of degree~$i$ in~$G$. By \cref{thm:2connected} we
know that~$G$ is $2$-connected and, therefore, we
have~$v_0 = v_1 = 0$. Thus, we have
\begin{equation}\label{eq:n}
n = \sum_{i=2}^{n-1} v_i
\;\;\text{and by the handshaking lemma}\;\;
2m = \sum_{i=2}^{n-1} i \cdot v_i.
\end{equation}

Vertices of degree four or higher can be adjacent to
hermits. Let~$v_i^{\mathrm{h}j}$ denote the number of vertices of
degree~$i$ incident to~$j$ hermits in~$G$. By~\cref{prop:hermit_deg}
we have
\begin{equation}\label{eq:degsum}
  v_i = \sum_{j=0}^{\lfloor i/3 \rfloor} v_i^{\mathrm{h}j}
  \hspace{1cm} \text{for all } i \ge 3.
\end{equation}

By \cref{lem:efficient_hermit_neighbors}, both neighbors of a hermit
have degree at least four.
Thus, double counting the edges between hermits and their neighbors we
obtain
\begin{align}\label{eq:deg2}
  2 v_2 \leq v_4^{\mathrm{h}1} + v_5^{\mathrm{h}1} +
  v_6^{\mathrm{h}1} + 2 v_6^{\mathrm{h}2} +
  v_7^{\mathrm{h}1} + 2 v_7^{\mathrm{h}2}
  + 2v_8 + v_9^{\mathrm{h}1} + 2 v_9^{\mathrm{h}2} +  \nonumber \\ 
  3 v_9^{\mathrm{h}3} +
  \sum_{i=10}^{n-1}
  \lfloor
  i/3 \rfloor v_i.
\end{align}

If a vertex~$u$ claims halfedges at a vertex~$v$, we say that~$v$
\emph{serves}~$u$. According to~\ref{c:2}, every vertex of degree
three claims three halfedges at a high-degree neighbor. Every degree
four vertex that is adjacent to a hermit together with this hermit
claims four halfedges at a high-degree neighbor by~\ref{c:3}. We sum
up the number of these claims and assess how many of them can be
served by the different types of high-degree vertices.

In general, a high-degree vertex of degree~$i\ge 5$ can serve at
most~$\lfloor i/3\rfloor$ such claims. For~$i\in\{5,6,7,9\}$, we make
 a more detailed analysis, taking into
account the number of adjacent hermits. Specifically, by~\ref{c:3} and~\ref{c:4} a
degree five vertex serves at most one low-degree vertex, which is
either a hermit or a degree-three vertex. A degree six vertex can
serve two degree-three vertices but only if it is not adjacent to a
hermit. If a degree six vertex serves a degree four vertex, it is
adjacent to exactly one hermit by~\ref{c:3}. In particular, a degree
six vertex that is adjacent to two hermits does not serve any degree
three or degree four vertex. Altogether we obtain the following inequality:
\begin{align}\label{eq:deg3}
  v_3 + v_4^{\mathrm{h}1} \leq v_5^{\mathrm{h}0} + 2
  v_6^{\mathrm{h}0} + v_6^{\mathrm{h}1} + 2v_7^{\mathrm{h}0} +
  2v_7^{\mathrm{h}1} + v_7^{\mathrm{h}2} + 2 v_8 + 3 v_9^{\mathrm{h}0} + \nonumber  \\ 
  2 v_9^{\mathrm{h}1} + 2v_9^{\mathrm{h}2} +v_9^{\mathrm{h}3} +
  \sum_{i=10}^{n-1} \lfloor i/3 \rfloor v_i. 
\end{align}
The combination~$(\eqref{eq:deg2}+\eqref{eq:deg3})/2$ together with
\eqref{eq:degsum} yields
\begin{equation}\label{eq:lowbound}
  v_2 + \frac12 v_3 \le \frac12 v_5 + v_6 + \frac32 v_7 + 2v_8 + 2v_9
  + \sum_{i=10}^{n-1} \lfloor i/3 \rfloor v_i.
\end{equation}
Now, using these equations and inequalities, we can prove that
$m - 2n \geq 0$, to complete the proof of \cref{thm:main}. Let us
start from the left hand side, using
\eqref{eq:n}. 
\begin{eqnarray*}
  m - 2n &=& \frac12 \sum_{i=2}^{n-1} i v_i - 2\sum_{i=2}^{n-1} v_i
             \;\;=\;\; \sum_{i=2}^{n-1}\frac{i-4}{2}v_i\\
         &=& -v_2 -\frac12 v_3 + \frac12 v_5 + v_6 + \frac32 v_7 +
             2v_8 + \frac52 v_9 + \sum_{i=10}^{n-1}\frac{i-4}{2}v_i
\end{eqnarray*}
By \eqref{eq:lowbound} the right hand side is nonnegative, quod erat
demonstrandum.\bigskip

\subsection{Admissible Drawings}\label{sec:remarks}

So far we have worked with the abstract graph~$G$. In order to discuss
our charging scheme, we also use a suitably chosen drawing
of~$G$. Specifically, we consider a maximal $2$-planar graph~$G$
on~$n\ge 5$ vertices and a crossing-minimal $2$-plane drawing~$D$
of~$G$ that, among all such drawings, minimizes the number of doubly
crossed edges. We refer to a drawing with these properties as an
\emph{admissible} drawing of~$G$. By \cref{thm:2planar_simple} we know
that~$D$ is simple.

\subsection{Hermits and degree four vertices}\label{sec:hermit}

\begin{restatable}{lemma}{HermitUncrossed}\label{lem:hermit_uncrossed}
  \renewcommand{\mylink}{\statlink{lem:hermit_uncrossed}\prooflink{PHermitUncrossed}}
  Let~$h$ be a hermit and let~$x,y$ be its neighbors in~$G$. Then~$x$
  and~$y$ are adjacent in~$G$ and all three edges~$xy,hx,hy$ are
  uncrossed in~$D$.
\end{restatable}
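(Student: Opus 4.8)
The plan is to reduce the entire statement to a single geometric claim: that the three vertices $h$, $x$, and $y$ all lie on the boundary of one common face of~$D$. Once this is established, I apply \cref{lem:uncrossed_edge} to each of the three pairs $\{h,x\}$, $\{h,y\}$, and $\{x,y\}$; each application yields that the corresponding pair is joined by an edge of~$G$ that is uncrossed in~$D$. Since $D$ is simple by \cref{thm:2planar_simple}, this is exactly the assertion that $xy,hx,hy$ are edges and all uncrossed. So the whole argument concentrates on producing this common face.

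To locate it, I first use that $\deg(h)=2$: the two edges $hx$ and $hy$ split a small neighborhood of~$h$ into two angular wedges $W_1,W_2$, lying in faces $f_1,f_2$ respectively (a priori $f_1$ and~$f_2$ may coincide). Each of $hx$ and~$hy$ borders both wedges near~$h$. By \cref{lem:vertex_incident_to_uncrossededge}, $h$ is incident to an uncrossed edge, so at least one of $hx,hy$ is uncrossed; assume without loss of generality that $hx$ is uncrossed. An uncrossed edge borders the same two faces along its entire length, so $x$ lies on the boundary of both $f_1$ and~$f_2$; together with $h\in\partial f_1\cap\partial f_2$ this already places $h$ and~$x$ on a common face. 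It then remains only to show that $hy$ is uncrossed as well: in that case, walking from~$h$ along~$hy$ reaches~$y$ without leaving $\partial f_1$, so $y\in\partial f_1$, and $h,x,y$ share the face~$f_1$, completing the reduction.

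The crux, and the step I expect to be the main obstacle, is therefore proving that $hy$ cannot be crossed. I would argue by contradiction, exploiting the admissibility of~$D$ (crossing-minimal and, among those, minimizing the number of doubly crossed edges) together with the strong freedom afforded by $\deg(h)=2$. Suppose $hy$ is crossed, and let~$p$ be its first crossing encountered when leaving~$h$, where $hy$ crosses some edge~$e$. The plan is to reroute locally near~$h$: because $hx$ is uncrossed and borders the wedge-face~$f_1$, it supplies a crossing-free corridor along which the edges incident to~$h$ have room to be redrawn. Concretely, I would either slide the portion of~$hy$ between~$h$ and~$p$ through~$h$'s wedge, or reroute~$e$ across that wedge, so as to remove the crossing at~$p$ while keeping every edge within its budget of two crossings. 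This produces a $2$-plane drawing with either strictly fewer crossings, or the same number of crossings but strictly fewer doubly crossed edges, either of which contradicts the admissibility of~$D$.

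The delicate points in this last step are (i)~verifying that the rerouted drawing remains $2$-plane, in particular handling the subcase where $hy$ carries two crossings and the subcase where the corridor along~$hx$ is itself bordered by other crossed edges, and (ii)~ensuring that the rerouting strictly improves the admissibility objective rather than merely trading one crossing for another. I expect these cases to be resolved by choosing, according to which of the two wedges at~$h$ is less congested, whether to reroute~$hy$ itself or the edge~$e$ it crosses, and by invoking crossing-minimality of~$D$ to bound the crossings that the new curve can encounter in the corridor.
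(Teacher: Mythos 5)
Your reduction is fine: if both $hx$ and $hy$ are uncrossed, then $h$, $x$, $y$ do lie on a common face and \cref{lem:uncrossed_edge} finishes the job (the paper only needs that lemma for $xy$, since $hx,hy$ are already edges of $G$, but that difference is cosmetic). The gap is in the crux you yourself single out: showing that $hy$ cannot be crossed. A transversal crossing between $hy$ and an edge $e$ cannot be removed by ``sliding'' the uncrossed initial arc of $hy$ through a wedge at $h$ --- to eliminate it you must reroute one of the two curves around an endpoint of the other, and for that you need an uncrossed corridor leading to such an endpoint. Your sketch never establishes that such a corridor exists, and ``choosing the less congested wedge'' does not produce one; this is precisely the point where the argument currently fails.

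The missing ingredient is the paper's \cref{lem:crneighbor}: since $e$ has at most two crossings, one of its two arcs at the first crossing $\chi$ of $hy$ is uncrossed, so the endpoint of $e$ on that side shares a face with $h$ and is therefore, by \cref{lem:uncrossed_edge} and maximality, joined to $h$ by an uncrossed edge; as $\deg(h)=2$ and simplicity excludes $y$ (an edge cannot cross an adjacent edge in a simple drawing), this endpoint is $x$, and the edge $hx$ is uncrossed. This pins down exactly the corridor you were looking for: reroute $e$ starting from $x$ along the uncrossed edge $hx$, around $h$, then along the uncrossed arc $h\chi$ of $hy$ to $\chi$, and from there along its original drawing. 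This removes the crossing at $\chi$ and introduces no new one, contradicting crossing-minimality alone --- the secondary admissibility criterion (number of doubly crossed edges) is never needed here. Without identifying that $e$ is incident to $x$ with an uncrossed arc $x\chi$, neither of your two proposed reroutings can be justified, so as written the proof does not go through.
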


\noindent
We refer to the edge~$xy$ as the \emph{base} of the hermit~$h$, which
\emph{hosts}~$h$.

\begin{restatable}{lemma}{OneHermitPerEdge}\label{lem:one_hermit_per_edge}
  \renewcommand{\mylink}{\statlink{lem:one_hermit_per_edge}\prooflink{POneHermitPerEdge}}
  Every edge of $G$ hosts at most one hermit.
\end{restatable}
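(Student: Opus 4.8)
The plan is to prove the contrapositive flavor of the statement: suppose some edge $xy$ hosts two hermits $h_1$ and $h_2$, and derive a contradiction. By \cref{lem:hermit_uncrossed}, for each hermit $h_i$ the three edges $xy$, $h_ix$, $h_iy$ are all present in $G$ and uncrossed in the admissible drawing $D$. First I would examine the local picture around the uncrossed edge $xy$. Since $xy$ is uncrossed, it bounds two faces of $D$ (thinking of the drawing restricted to a neighborhood of $xy$). Each hermit, together with its two uncrossed edges to $x$ and $y$, forms an uncrossed triangle $xyh_i$, and the three uncrossed edges bound a triangular region that must be a face of $D$: indeed, if any vertex or edge crossing lay inside the triangle $xyh_i$, then $h_i$ (having degree exactly two) could not reach it without creating a crossing on one of its already-uncrossed edges, and \cref{lem:uncrossed_edge} forces any vertex sharing this face with $h_i$ to be adjacent to $h_i$, contradicting $\deg(h_i)=2$ unless the region is exactly the empty triangular face bounded by $x,y,h_i$.

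Next I would use the orientation of the two hermits relative to the edge $xy$. Each hermit's triangle sits on one of the two sides of the uncrossed curve $xy$. If $h_1$ and $h_2$ lie on opposite sides of $xy$, then $xy$ has the empty triangular face $xyh_1$ on one side and $xyh_2$ on the other, so $x$ and $y$ both lie on the common boundary of these two faces. Here the key observation is that $h_1$ and $h_2$ then both lie on faces incident to the uncrossed edge $xy$, and one checks that $h_1$ and $h_2$ share a common face: the face structure around $xy$ consists solely of the two empty triangles, so the region is a quadrilateral $h_1 x h_2 y$ formed by four uncrossed edges with $xy$ as a diagonal. Applying \cref{lem:uncrossed_edge} to the pair $h_1,h_2$ — if they share a face — would force the edge $h_1h_2$ to exist, contradicting $\deg(h_1)=\deg(h_2)=2$ (as each hermit's only neighbors are $x$ and $y$). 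The main obstacle is to argue cleanly that $h_1$ and $h_2$ do in fact lie on a common face, i.e. that nothing separates the two triangular faces along $xy$; this follows because $xy$ is uncrossed and thus borders exactly two faces, each of which must be one of the empty hermit-triangles.

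The remaining case is when $h_1$ and $h_2$ lie on the same side of $xy$. Then the two empty triangles $xyh_1$ and $xyh_2$ both occupy the same side of the uncrossed curve $xy$, but each is supposed to be a face incident to $xy$ on that side. Since $xy$ borders exactly one face on each side, these two faces must coincide, which is impossible as one contains $h_1$ on its boundary and the other $h_2$, and a triangular face $xyh_1$ cannot also have $h_2$ on its boundary unless $h_1=h_2$. This contradiction disposes of the same-side case.

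I expect the main technical obstacle to be the careful verification that each hermit-triangle is genuinely an \emph{empty face} of $D$ rather than a region that encloses other parts of the drawing. This is where \cref{lem:uncrossed_edge} and \cref{lem:hermit_uncrossed} do the heavy lifting: any vertex inside or on the boundary of the region bounded by $xy$, $h_ix$, $h_iy$ would share a face with the degree-two vertex $h_i$ and hence be forced to be adjacent to it, which is impossible beyond $x$ and $y$ themselves. Once emptiness is established, the contradiction in both the opposite-side and same-side configurations follows from the fact that the uncrossed edge $xy$ is incident to exactly two faces, leaving no room for two distinct hermit-triangles on the same side and forcing a shared face (and thus a forbidden edge $h_1h_2$) in the opposite-side configuration.
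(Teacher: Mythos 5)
There is a genuine gap, concentrated in your opposite-sides case. Even if one grants that the two triangles $xyh_1$ and $xyh_2$ are faces lying on opposite sides of $xy$, the vertices $h_1$ and $h_2$ do \emph{not} lie on a common face: $h_1$ is on the boundary of the face $xyh_1$ and $h_2$ on the boundary of the distinct face $xyh_2$, and these two faces are separated by the uncrossed edge $xy$ itself. Your own description concedes this when you call $xy$ a \emph{diagonal} of the quadrilateral $h_1xh_2y$ --- a diagonal means the quadrilateral is not a face. So \cref{lem:uncrossed_edge} cannot be invoked for the pair $h_1,h_2$, and the static face analysis cannot close this case. What is needed (and what the paper does) is a \emph{redrawing} step: since $xh_2$ and $yh_2$ are uncrossed by \cref{lem:hermit_uncrossed}, one can move $h_2$ arbitrarily close to $h_1$ and reroute $xh_2$ and $yh_2$ alongside $xh_1$ and $yh_1$ without creating any crossing; in the resulting admissible drawing $h_1$ and $h_2$ bound a common (empty quadrilateral) region and an uncrossed edge $h_1h_2$ can be added, contradicting maximality of the abstract graph $G$. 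This single redrawing handles both of your cases uniformly and makes the side distinction unnecessary.

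A secondary weakness is your emptiness claim. The region bounded by the uncrossed triangle $xyh_i$ need not be a face: edges emanating from $x$ and $y$ may enter that region and cross each other, enclosing further vertices that do \emph{not} lie on any face incident to $h_i$. \cref{lem:uncrossed_edge} only forces vertices on the boundary of the two faces incident to $h_i$ to be adjacent to $h_i$; it says nothing about vertices deeper inside the triangle. So the same-side case, which relies on both triangles being faces incident to $xy$, is also not fully justified as written. Both issues disappear once you switch to the redrawing argument, which never needs the triangles to be faces --- only the (established) fact that all four edges $xh_1,yh_1,xh_2,yh_2$ are uncrossed.
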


By \cref{lem:efficient_hermit_neighbors} both neighbors of a hermit
have degree at least four. 
A vertex is of type \emph{T4-H} if it has degree exactly four and it
is adjacent to a hermit. The following lemma characterizes these
vertices and ensures that every hermit has at least one high-degree
neighbor.

\begin{restatable}{lemma}{DegreeFourHermit}\label{lem:degree4_hermit}
  \renewcommand{\mylink}{\statlink{lem:degree4_hermit}\prooflink{PDegreeFourHermit}}
  Let~$u$ be a T4-H vertex with neighbors~$h,v,w,x$ in~$G$ such
  that~$h$ is a hermit and~$v$ is the second neighbor of~$h$. Then
  both~$uw$ and~$ux$ are doubly crossed in~$D$, and the two faces
  of~$D\setminus h$ incident to~$uv$ are triangles that are bounded by
  (parts of) edges incident to~$u$ and doubly crossed edges incident
  to~$v$.  Furthermore, we have~$\deg(v)\ge 6$, and if~$\deg(v)=6$,
  then~$h$ is the only hermit adjacent to~$v$ in~$G$.
\end{restatable}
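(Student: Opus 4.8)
The plan is to work entirely in the fixed admissible drawing $D$ and to read off the local structure around $u$, $v$, and $h$ from repeated use of \cref{lem:uncrossed_edge} (two vertices on a common face are joined by an uncrossed edge) together with crossing-minimality. By \cref{lem:hermit_uncrossed} the three edges $uv$, $hu$, $hv$ are uncrossed and bound a triangle $T$ hosting $h$. Since the uncrossed boundary of $T$ is crossing-free and $h$ is saturated, the interior of $T$ is empty; hence $uh$ and $uv$ are consecutive in the rotation at $u$, and the remaining edges $uw,ux$ leave $u$ on the side of $uv$ opposite to $h$. Deleting $h$ merges $T$ with the wedge of $h$ on its far side, so in $D\setminus h$ the edge $uv$ bounds exactly two faces: the ``$h$-side'' face $F$, bounded at $u$ by $uv$ and one of $uw,ux$ (say $ux$), and the other face $f_1$, bounded at $u$ by $uv$ and $uw$. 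By \cref{lem:uncrossed_edge}, the only vertices on $F$ or on $f_1$ are $u$ and $v$.

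Next I would show that $uw$ and $ux$ are crossed, which is the clean part and uses maximality directly. If $ux$ were uncrossed, then $F$ would be the empty triangle $uvx$ with $h$ in its interior, and the edge $hx$ could be drawn inside this triangle without any crossing, contradicting maximality of $G$. If $uw$ were uncrossed, then $f_1$ would be the empty triangle $uvw$ (its third side $vw$ being uncrossed by \cref{lem:uncrossed_edge}), and $hw$ could be routed from $h$ across $uv$ into $f_1$ to $w$ with a single crossing, again contradicting maximality. Hence both $uw$ and $ux$ are crossed.

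The technical heart is to upgrade ``crossed'' to ``doubly crossed'' and to pin down the triangular shape of $F$ and $f_1$. Since $u$ and $v$ are the only vertices on $f_1$, its boundary consists of $uv$, an arc of $uw$ ending at $u$, and an arc of some edge $g$ incident to $v$, the remaining corners being crossings. I would argue, using crossing-minimality to exclude a reducible configuration (a face boundary carrying two or more crossings can be shortcut to save a crossing), that $\partial f_1$ carries a single crossing $c_1 = uw\cap g$; thus $f_1$ is the triangle $u\,v\,c_1$ and, symmetrically, $F$ is a triangle with corner $c_2 = ux\cap g'$ for an edge $g'$ at $v$. Feeding the same rerouting arguments to $g,g'$ and to $uw,ux$ beyond their first crossings---where a second crossing is forced because these edges must still escape the region walled off by $v$'s incident edges---shows that $g$, $g'$, $uw$, and $ux$ are all doubly crossed. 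This step, balancing \cref{lem:uncrossed_edge} against crossing-minimality, is where I expect the real work and the bulk of the case analysis.

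Finally I would bound $\deg(v)$ and settle uniqueness. The edges $vu$ and $vh$ are uncrossed, while $g$ and $g'$ are distinct doubly-crossed edges at $v$ (distinct since they bound faces on opposite sides of $vu$), giving four distinct neighbors; analysing the second crossings of $g,g'$ and the faces adjacent to the two triangles across their $v$-arcs, again via \cref{lem:uncrossed_edge}, yields at least two further edges at $v$, so $\deg(v)\ge 6$. For $\deg(v)=6$ the six incident edges are exactly $vu,vh$ (uncrossed) and four doubly-crossed edges; a second hermit $h'$ adjacent to $v$ would, by \cref{lem:hermit_uncrossed} and \cref{lem:one_hermit_per_edge}, need its own uncrossed base edge and uncrossed triangle at $v$ on an edge other than $vh$, which the all-crossed remaining edges cannot provide without forcing a seventh edge at $v$. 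Hence $h$ is the only hermit adjacent to $v$, completing the proof.
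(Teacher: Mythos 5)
Your setup --- the empty uncrossed triangle $uvh$ from \cref{lem:hermit_uncrossed}, the freedom to place $h$ on either side of $uv$, and the maximality argument that $uw$ and $ux$ must be crossed --- matches the paper and is sound. But each of the three hard claims is left with a genuine gap. For the triangular faces, you assert that the third boundary arc of $f_1$ belongs to an edge $g$ incident to $v$ and that $\partial f_1$ carries a single crossing, appealing to a principle that ``a face boundary carrying two or more crossings can be shortcut to save a crossing.'' No such general principle holds, and it is not how the paper argues: there, \cref{lem:crneighbor} shows that the first edge crossing $ux$ must be incident to $v$ (its uncrossed-arc endpoint is an uncrossed neighbour of $u$, hence $v$ or $h$, and $h$ is excluded), \cref{lem:credge} rules out one of the two possible crossing configurations, and \cref{thm:2connected} shows the face is empty. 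Likewise, your reason why $uw$ and $ux$ are \emph{doubly} crossed --- that they ``must still escape the region walled off by $v$'s incident edges'' --- does not work, since nothing forces $w$ or $x$ to lie outside that region. The paper instead assumes, say, that $ux$ is only singly crossed and then constructs a \emph{new} edge $hp$ (after possibly rerouting the edge $vc$), spending the spare crossing on $ux$ and contradicting maximality; this is a different and essential idea absent from your sketch.

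The degree bound and the hermit-uniqueness claim are also not established. ``Analysing the second crossings of $g,g'$ \dots\ yields at least two further edges at $v$'' is unjustified, and I do not see how to make it work: by \cref{lem:credge} the second crossing of the doubly crossed edge at $v$ does not even lie on a common face with $v$, so it produces no new neighbour of $v$ directly. The paper obtains $\deg(v)\ge 5$ and then $\ge 6$ by explicit redrawings (moving $v$ and $h$ across the uncrossed path formed by $u$ and the uncrossed arcs of $ux$ and $uw$) that strictly reduce the number of crossings unless enough further, suitably crossed edges at $v$ exist. For the final claim, you assume that when $\deg(v)=6$ the four edges at $v$ other than $vu,vh$ are all crossed; but the configuration you must exclude --- a second hermit $h'$ hosted on an edge $vy$ --- is precisely one in which $vh'$ and $vy$ are \emph{uncrossed} by \cref{lem:hermit_uncrossed}, so your argument assumes exactly what it needs to prove. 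The paper excludes this configuration by a redrawing that keeps the drawing $2$-plane and then admits an uncrossed edge $hh'$, contradicting maximality.
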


\noindent\begin{minipage}[b]{.82\linewidth}
  \hspace*{1.5em} In our charging scheme, each
  hermit~$h$ claims two halfedges at each high-degree neighbor~$v$:
  the halfedge~$\half{hv}$ and the halfedge~$\half{uv}$, where~$uv$
  denotes the edge that hosts~$h$. Each T4-H vertex~$u$ claims the two
  doubly crossed halfedges at~$v$ that bound the triangular faces
  incident to~$uv$ in~$D$.
\end{minipage}
\begin{minipage}[b]{.18\linewidth}
  \hfill\includegraphics[page=4]{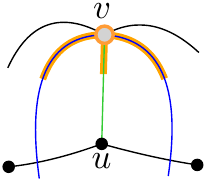}
\end{minipage}

\subsection{Degree-three vertices}\label{sec:degreethree}

We distinguish four different types of degree-three vertices in~$G$,
depending on their neighborhood and on the crossings on their incident
edges in~$D$. Consider a degree-three vertex~$u$ in~$G$. By
\cref{lem:vertex_incident_to_uncrossededge} every vertex is incident
to at least one uncrossed edge in~$D$.

\paragraph{\textbf{T3-1: exactly one uncrossed edge.}} The two other edges
incident to~$u$ are crossed.

\begin{restatable}{lemma}{DegreeThreeTwoCrossed}\label{lem:degree3_2crossed_neighbordeg5}
  \renewcommand{\mylink}{\statlink{lem:degree3_2crossed_neighbordeg5}\prooflink{PDegreeThreeTwoCrossed}}
  Let~$u$ be a T3-1 vertex with neighbors~$v,w,x$ in~$G$ such that the
  edge~$uv$ is uncrossed in~$D$. Then 
  the two faces of~$D$ incident to~$uv$ are
  triangles that are bounded by (parts of) edges incident to~$u$ and
  doubly crossed edges incident to~$v$. Furthermore, we
  have~$\deg(v)\ge 5$.
\end{restatable}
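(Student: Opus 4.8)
The plan is to analyze the local structure around the uncrossed edge $uv$ incident to the T3-1 vertex $u$. Since $u$ has degree three with exactly one uncrossed edge $uv$, both remaining edges $uw$ and $ux$ are crossed. First I would study the two faces of $D$ incident to $uv$ and identify their boundaries. Because $uv$ is uncrossed, each of these two faces has $u$ and $v$ consecutively on its boundary. I would then invoke \cref{lem:uncrossed_edge}: if any vertex $z$ other than $u,v$ lay on the boundary of a face incident to $uv$ together with $u$ (or together with $v$), then $uz$ (or $vz$) would be an uncrossed edge of $G$; since $u$ has exactly one uncrossed edge already, this would force $z$ to be a neighbor of $u$, constraining the face boundaries to involve only the edges incident to $u$, namely $uw$ and $ux$.

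Next I would argue that each of the two faces incident to $uv$ is a triangle. The key is that the portion of the boundary emanating from $u$ (other than along $uv$) must travel along one of $u$'s crossed edges, $uw$ or $ux$, until it reaches a crossing point, after which it continues along some edge incident to $v$. Using \cref{lem:uncrossed_edge} again to forbid extra vertices on the face boundary, I would show the face closes up after exactly one crossing, so it is a triangle bounded by a part of an edge at $u$, a part of an edge at $v$, and the uncrossed edge $uv$ itself. The crossing edges incident to $v$ that bound these triangles should then be shown to be doubly crossed. Here I would exploit admissibility (minimizing doubly crossed edges among crossing-minimal drawings): if such a bounding edge at $v$ were only singly crossed, I would expect to rule this out either by a local rerouting/flip argument that reduces crossings or the number of doubly crossed edges, contradicting the choice of $D$, or by an application of \cref{lem:uncrossed_edge} forcing an unwanted uncrossed edge.

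Finally, to establish $\deg(v)\ge 5$, I would count the edges forced to be incident to $v$. The uncrossed edge $uv$ is one, and the two triangular faces on either side contribute two distinct doubly crossed edges at $v$ (the bounding edges identified above). Since each such doubly crossed edge has two crossings, I would track the edges of $u$ ($uw$ and $ux$) crossing these edges at $v$ and use a parity or incidence argument showing that the doubly crossed edges at $v$ cannot coincide and must be accompanied by enough additional incidences, pushing $\deg(v)$ up to at least five. I would likely argue that $v$ is incident to at least the uncrossed edge plus at least four edge-ends arising from the two doubly crossed edges and the structure they impose.

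The main obstacle I anticipate is proving that the two bounding edges at $v$ are genuinely doubly crossed and distinct, and converting this local picture into the clean degree bound $\deg(v)\ge 5$. The subtle point is excluding degenerate configurations---for instance, where $w$ or $x$ coincides with $v$'s neighbors in a way that collapses the count, or where a single edge at $v$ plays the role of both triangle sides. Ruling these out will rely crucially on the admissibility of $D$ (crossing-minimality together with the minimization of doubly crossed edges) and repeated use of \cref{lem:uncrossed_edge} to forbid spurious uncrossed edges, so the heart of the argument is a careful case analysis of how the crossed edges $uw,ux$ interact with the edges around $v$.
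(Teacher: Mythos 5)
Your outline of the first half---identifying the faces incident to $uv$, showing each closes up as a triangle after a single crossing, and arguing that the bounding edges at $v$ are doubly crossed---is essentially the paper's route, although the paper gets there slightly more directly: the first edge $ab$ crossing $ux$ must, by \cref{lem:crneighbor}, have an endpoint joined to $u$ by an uncrossed edge, which forces that endpoint to be $v$ (since $uv$ is the \emph{only} uncrossed edge at $u$); if $vb$ were merely singly crossed then $u$ and $b$ would share a face and \cref{lem:uncrossed_edge} would force $b=v$, a contradiction with simplicity, so no appeal to the minimization of doubly crossed edges is needed here; and the triangle shape follows from \cref{lem:credge} (the second crossing of $vb$ cannot share a face with $v$, which pins down the orientation of the crossing) together with $2$-connectivity to exclude vertices inside the face. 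These are refinements of what you sketch, not a different approach.

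The genuine gap is in the degree bound. The local structure you build certifies only \emph{three} edges at $v$: the uncrossed edge $uv$ and the two doubly crossed edges $vb$ and $vc$ bounding the triangles. No counting, parity, or ``edge-end'' argument gets you from three to five: the second crossings of $vb$ and $vc$ live far from $v$ and contribute nothing to $\deg(v)$. The paper instead rules out $\deg(v)=3$ and $\deg(v)=4$ by two separate contradiction arguments. For $\deg(v)=3$ one moves $v$ to the other side of the uncrossed path $\alpha u\gamma$ and reroutes $vb,vc$ so that they no longer cross $ux,uw$, strictly reducing the number of crossings. For $\deg(v)=4$ one examines the fourth edge $vy$: it cannot be uncrossed (a similar redrawing would again reduce crossings), so it is crossed, and by \cref{lem:crneighbor} the first edge crossing it has an endpoint adjacent to $v$ via an uncrossed edge; that endpoint can only be $u$, so the crossing edge is $uw$ or $ux$ --- but the first crossings of $uw$ and $ux$ (starting from $u$) are already occupied by $vc$ and $vb$, so no uncrossed arc from $u$ to the new crossing exists, a contradiction. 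Without these two redrawing steps your argument stalls at $\deg(v)\ge 3$.
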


\noindent\begin{minipage}[b]{.82\linewidth}
  \hspace*{1.5em} In our charging scheme, each
  T3-1 vertex~$u$ claims three halfedges at its adjacent high-degree
  vertex~$v$: the uncrossed halfedge~$\half{uv}$ along with the two
  neighboring halfedges at~$v$, which are doubly crossed by
  \cref{lem:degree3_2crossed_neighbordeg5}.\par\medskip
\end{minipage}
\begin{minipage}[b]{.18\linewidth}
  \hfill\includegraphics[page=1]{summary_small}
\end{minipage}

\paragraph{\textbf{T3-2: exactly two uncrossed edges.}}  The third edge
incident to~$u$ is crossed.

\begin{restatable}{lemma}{CRadJ}\label{lem:cradj} 
  \renewcommand{\mylink}{\statlink{lem:cradj}\prooflink{PCRadJ}}
  Let~$u$ be a T3-2 vertex in~$D$ with neighbors~$v,w,x$ such
  that~$uv$ is crossed at a point~$\alpha$. Then~$\alpha$ is the only
  crossing of~$uv$ in~$D$. Further, the edge that crosses~$uv$ is
  doubly crossed, it is incident to~$w$ or~$x$, and its part
  between~$w$ or~$x$ and~$\alpha$ is uncrossed.
\end{restatable}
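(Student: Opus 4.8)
The plan is to fix an admissible drawing $D$ of $G$ (which is simple by \cref{thm:2planar_simple}) and prove the four assertions---that $uv$ is singly crossed, that the crossing edge is doubly crossed, that it is incident to $w$ or $x$, and that the corresponding part of it is uncrossed---by a local analysis of the faces around $u$ and the crossing $\alpha$, with \cref{lem:uncrossed_edge} as the main tool. Throughout, $e$ denotes the edge crossing $uv$ at $\alpha$, and I write $\overline{u\alpha}$ for the part of $uv$ between $u$ and $\alpha$; this arc is uncrossed because $\alpha$ is the crossing of $uv$ closest to $u$.

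I would first settle the two incidence claims. Since $D$ is $2$-plane, $e$ has at most two crossings, so at least one of the two arcs into which $\alpha$ splits $e$ is uncrossed; let $p$ be the endpoint of $e$ reached by such an uncrossed arc $\overline{\alpha p}$. Because $uv$ and $e$ cross transversally at $\alpha$, the rays $\overline{u\alpha}$ and $\overline{\alpha p}$ are consecutive in the rotation at $\alpha$ and hence bound a common face $\phi$, with both $u$ and $p$ on $\partial\phi$. By \cref{lem:uncrossed_edge}, $up$ is an uncrossed edge, and since the only uncrossed edges at $u$ are $uw$ and $ux$, we get $p\in\{w,x\}$. Relabelling if necessary, assume $p=w$; then $e$ is incident to $w$ and its part $\overline{w\alpha}$ is uncrossed, and $\phi$ is the triangle $f_1$ bounded by $uw$, $\overline{w\alpha}$, and $\overline{u\alpha}$.

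Next I would show that $e$ is doubly crossed. The key observation is that $w$ and $x$ both lie on the boundary of the face $f_2$ occupying the wedge at $u$ between $uw$ and $ux$: following these two uncrossed edges from $u$ reaches $w$ and $x$ on $\partial f_2$. Hence, by \cref{lem:uncrossed_edge}, $wx$ is an uncrossed edge and $f_2$ is the triangle $(u,w,x)$. Now suppose, for contradiction, that $e$ is only singly crossed. Then both arcs of $e$ at $\alpha$ are uncrossed, and applying the argument of the previous paragraph to each of them places both endpoints of $e$ in $\{w,x\}$; being distinct, they are exactly $w$ and $x$, so $e=wx$. But then the edge $wx$ is crossed at $\alpha$, while $w$ and $x$ are cofacial on $f_2$---contradicting \cref{lem:uncrossed_edge}, which forces edges between cofacial vertices to be uncrossed. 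Therefore $e$ is doubly crossed, and, by the same cofaciality argument, its second endpoint is different from $x$.

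The main obstacle is the first assertion, that $\alpha$ is the only crossing of $uv$. Here I would argue by contradiction: suppose $uv$ is crossed a second time, at a point $\beta$ nearer to $v$, by an edge $e'$. The plan is to exploit the empty triangles $f_1=(u,w,\alpha)$ and $f_2=(u,w,x)$ together with the admissibility of $D$---crossing-minimal and, among such drawings, minimizing the number of doubly crossed edges---to reroute $uv$ into a drawing of the same graph with either fewer crossings or fewer doubly crossed edges, contradicting the choice of $D$. The difficulty is that $uv$ is linked with both $e$ and $e'$, so a purely local reroute merely relocates the crossings; the delicate point is to use the uncrossed edges $uw$, $ux$, $wx$ at the degree-three vertex $u$ to route $uv$ around the crossing it should avoid, and to verify that the modified drawing remains simple and $2$-plane and that no other edge acquires a new crossing. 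This rerouting step is where I expect the real work of the proof to lie.
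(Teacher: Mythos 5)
Your treatment of the incidence claims and of the ``doubly crossed'' claim is sound and matches the paper's: the cofaciality argument you use is exactly what the paper packages as \cref{lem:crneighbor}, and your contradiction via the uncrossed edge $wx$ is the paper's argument almost verbatim. But the central assertion---that $\alpha$ is the \emph{only} crossing of $uv$---is not actually proved: you only announce a plan to reroute $uv$ and explicitly defer ``the real work''. As you yourself diagnose, rerouting $uv$ is the wrong move, since $uv$ is engaged with two crossing edges and its far endpoint $v$ is not under local control. The paper's idea is to reroute the \emph{other} edge $e=wb$ instead. Its arc from $w$ to $\alpha$ is uncrossed, $w$ is tied to $u$ by the uncrossed edge $uw$, and $u$ has a second uncrossed edge $ux$; hence $e$ can be redrawn from $w$ so as to approach $\alpha$ from \emph{either} side of $uv$: on one side by hugging $wu$ and then the uncrossed arc $u\alpha$, on the other by additionally swinging around $u$ across the previously uncrossed edge $ux$. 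One chooses the side from which $e$ can continue along its old trace to $b$ without crossing $uv$. This deletes the crossing at $\alpha$, adds at most one crossing on $ux$ (which thereby becomes at most singly crossed), leaves every other edge untouched, and drops $uv$ from two crossings to one.

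The punchline is that the total number of crossings does not increase while the number of doubly crossed edges strictly decreases, contradicting the \emph{admissibility} of $D$. Note that crossing-minimality alone does not suffice here: the new crossing on $ux$ may exactly compensate the one removed at $\alpha$, and it is the secondary minimization of doubly crossed edges (built into the definition of an admissible drawing in \cref{sec:remarks}) that delivers the contradiction. Your sketch does name ``fewer crossings or fewer doubly crossed edges'' as the target invariant, but without the actual rerouting of $wb$ the proof of the first assertion is missing. Two minor points besides: you silently assume $\alpha$ is the crossing of $uv$ nearest to $u$---harmless, since proving the claim for the first crossing implies it for an arbitrary one, but it should be said; and your claim that the face between $uw$ and $ux$ \emph{is} the triangle $uwx$ is both unjustified and unneeded---only the cofaciality of $w$ and $x$ enters the argument.
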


By the following lemma, we are free to select which of the two
edges~$uv$ and~$ux$ incident to a T3-2 vertex are singly crossed; see
\cref{fig:t3-2stuff}~(left and middle).

\begin{restatable}{lemma}{TTTChoose}\label{prop:t32-choose}
  \renewcommand{\mylink}{\statlink{prop:t32-choose}\prooflink{PTTTChoose}}
  Let~$u$ be a T3-2 vertex in~$D$, and let the neighbors of~$u$ be
  $v,w,x$ such that the edge~$uv$ is (singly) crossed by a (doubly
  crossed) edge~$wb$. Then there exists an admissible drawing~$D'$
  of~$G$ such that (1)~$D'$ is identical to~$D$ except for the
  edge~$wb$ and (2)~the edge~$wb$ crosses the edge~$ux$ in~$D'$.
\end{restatable}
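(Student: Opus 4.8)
The plan is to reroute the edge~$wb$ by a purely local surgery near~$u$, pushing its first crossing from~$uv$ onto~$ux$ while leaving the rest of~$D$ untouched. Since~$u$ is a T3-2 vertex and~$uv$ is its crossed edge, the other two edges~$uw$ and~$ux$ are uncrossed. By \cref{lem:cradj} the point~$\alpha$ is the only crossing of~$uv$, and the portion of~$wb$ from~$w$ to~$\alpha$ is uncrossed; in particular the sub-arc of~$uv$ from~$u$ to~$\alpha$ is uncrossed. As~$u$ has degree three, its three incident edges split a neighbourhood of~$u$ into three angular sectors, and I label the three faces of~$D$ meeting~$u$ in these sectors: let~$T_2$ be the face in the sector between~$uv$ and~$ux$, and let~$T_0$ be the face in the sector between~$ux$ and~$uw$. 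Because~$uw$ and~$ux$ are uncrossed, their full arcs lie on the boundaries of the sector faces they bound; hence~$T_0$ is incident to the vertex~$w$ (via~$uw$) and to the edge~$ux$, while~$T_2$ is incident to~$ux$ as well. Finally, tracing the~$ux$-side of the uncrossed arc of~$uv$ from~$u$ to~$\alpha$ shows that this side is the single face~$T_2$ throughout, so the part of~$wb$ immediately after~$\alpha$ enters~$T_2$.

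Now I perform the reroute. Fix a point~$p$ on~$wb$ just beyond~$\alpha$, so that~$p$ lies in~$T_2$, and keep the portion of~$wb$ from~$p$ to~$b$ unchanged; note that this kept portion carries the second crossing of~$wb$ and does not touch~$ux$, since~$ux$ is uncrossed in~$D$. I replace the arc of~$wb$ from~$w$ to~$p$ by a new simple arc that leaves~$w$ into~$T_0$, runs inside~$T_0$ to a point of~$ux$ near~$u$, crosses~$ux$ transversally once into~$T_2$, and then runs inside~$T_2$ to~$p$, avoiding the short kept sub-arc of~$wb$. Each sub-arc of the new route stays inside a single face (first~$T_0$, then~$T_2$), so no edge other than~$ux$ is crossed, and~$ux$ is crossed exactly once. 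Call the resulting drawing~$D'$. By construction~$D'$ coincides with~$D$ except on the edge~$wb$, and~$wb$ crosses~$ux$ in~$D'$, which gives conclusions~(1) and~(2).

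It remains to verify that~$D'$ is admissible. The surgery removes exactly the crossing of~$wb$ with~$uv$ and creates exactly the crossing of~$wb$ with~$ux$, and changes nothing else; hence the total number of crossings is the same as in~$D$, so~$D'$ is again a crossing-minimal $2$-plane drawing (and therefore simple, by \cref{thm:2planar_simple}). Moreover the per-edge crossing counts change only for~$uv$ (from~$1$ to~$0$) and for~$ux$ (from~$0$ to~$1$), whereas~$wb$ remains doubly crossed. Thus the number of doubly crossed edges is unchanged, so among crossing-minimal drawings~$D'$ still minimizes it, and~$D'$ is admissible.

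The crux of the argument is the legitimacy of the reroute in the first two paragraphs, namely that~$T_0$ and~$T_2$ are genuine faces free of other edges and that the detour must cross~$ux$ exactly once. That~$T_0\neq T_2$ (so that the two sides of~$ux$ at~$u$ really are distinct faces) is forced by crossing-minimality: if~$T_0$ and~$T_2$ were the same face, one could join~$w$ to~$p$ by an arc lying entirely inside it and crossing no edge, turning~$wb$ into a singly crossed edge and strictly decreasing the total number of crossings, contradicting the crossing-minimality of~$D$. Given this, the remaining geometric steps are routine, since any two boundary points of a face can be connected by a simple arc inside it.
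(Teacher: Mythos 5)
Your proposal is correct and follows essentially the same route as the paper's proof: both reroute~$wb$ from~$w$ through the face flanking~$uw$ on the~$ux$ side, cross the uncrossed edge~$ux$ near~$u$, and rejoin the original drawing of~$wb$ at (or just beyond) the crossing~$\alpha$, then observe that only~$uv$ and~$ux$ change their crossing counts so that admissibility is preserved. Your write-up is in fact somewhat more explicit about the face structure than the paper's one-paragraph argument (e.g.\ the justification that the two sides of~$ux$ are distinct faces), and the one step you state without full justification---that the continuation of~$wb$ beyond~$\alpha$ borders the face between~$uv$ and~$ux$, which ultimately rests on \cref{lem:credge}---is left equally implicit in the paper's own proof.
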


\begin{restatable}{lemma}{DegreeThreeOneCrossed}\label{lem:degree3_1crossed_degrees555}
  \renewcommand{\mylink}{\statlink{lem:degree3_1crossed_degrees555}\prooflink{PDegreeThreeTwoOneCrossed}}
  Let $u$ be a T3-2 vertex with neighbors~$v,w,x$ 
  s.t.~the edge~$uv$ is singly crossed by a doubly crossed edge~$wb$
  in~$D$.
  Then~$\deg(w)\ge 5$ and $\min\{\deg(v),\deg(x)\}\ge 4$.
\end{restatable}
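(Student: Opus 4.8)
The plan is to fix the local drawing around $u$ with the help of \cref{lem:cradj}, extract a small cluster of forced uncrossed edges among $\{u,v,w,x\}$ by repeatedly applying \cref{lem:uncrossed_edge} to the faces incident to $u$ and to the crossing point, and only then push the degrees of $w$, $v$, and $x$ up to the claimed values; the last push is where the real work lies.

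First I would set up notation. By \cref{lem:cradj} the edge $wb$ crosses $uv$ at a single point $\alpha$, the subcurve $w\alpha$ of $wb$ is uncrossed, and since $wb$ is doubly crossed it has exactly one further crossing, say $\beta$, on the subcurve $\alpha b$. As $\alpha$ is the only crossing of $uv$, the subcurves $u\alpha$ and $\alpha v$ are uncrossed as well. Hence the region with corners $u$, $w$, $\alpha$ bounded by the three uncrossed pieces $uw$, $u\alpha$, $w\alpha$ is an empty triangular face $f_1$: any edge meeting its interior would have to cross one of these uncrossed pieces, and a vertex in its interior could not be joined to the rest of $G$ without crossing the boundary, contradicting $2$-connectivity (\cref{thm:2connected}). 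Since $w\alpha$ reaches $\alpha$ without crossing $uw$ or $ux$, this also fixes the rotation at $u$ as $uw,uv,ux$, with $f_1$ sitting in the sector between $uw$ and $uv$.

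Next I would locate four forced neighbors of $w$. The face across $uw$ from $f_1$ is the sector $f_3$ of $D$ at $u$ between $ux$ and $uw$; as $uw$ and $ux$ are uncrossed, both $w$ and $x$ lie on $\partial f_3$, so \cref{lem:uncrossed_edge} yields that $wx$ is an uncrossed edge. The face across $w\alpha$ from $f_1$ is, by the same argument at the crossing $\alpha$, a triangle with corners $w$, $v$, $\alpha$ bounded by $w\alpha$, $\alpha v$ and an edge $wv$, so \cref{lem:uncrossed_edge} gives that $wv$ is an uncrossed edge. Together with $uw$ and $wb$ this exhibits four distinct neighbors $u,v,x,b$ of $w$ (distinctness follows since $uw,wv,wx$ are uncrossed while $wb$ is doubly crossed), so $\deg(w)\ge 4$, and $\{u,v,w,x\}$ already carries the uncrossed edges $uw,ux,wv,wx$ with only $uv$ crossed. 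To upgrade this to $\deg(w)\ge 5$ I would rule out $\deg(w)=4$: in that case the rotation at $w$ is exactly $wx,uw,wb,wv$, the single remaining face between $wv$ and $wx$ carries both $v$ and $x$, so $vx$ is an uncrossed edge and $\{u,v,w,x\}$ induces a $K_4$ drawn with every edge uncrossed except $uv$, which is crossed once by $wb$. I expect the contradiction to come from the minimality properties of an admissible drawing: using the two empty triangles $uw\alpha$ and $vw\alpha$, one reroutes either $uv$ or the initial part of $wb$ so as to delete the crossing $\alpha$ without creating new crossings, thereby lowering either the total number of crossings (contradicting crossing-minimality) or, if that count is unchanged, the number of doubly crossed edges (contradicting admissibility, and possibly invoking the freedom of \cref{prop:t32-choose}). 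Proving that this reroute is genuinely crossing-free — it must avoid all edges incident to $w$ and $v$ as well as the second crossing $\beta$ — is the main obstacle, and I anticipate it requiring a careful, possibly multi-case, analysis of the rotations at $w$ and $v$.

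Finally, for $\min\{\deg(v),\deg(x)\}\ge 4$ I would reuse the cluster. Neither $v$ nor $x$ is a hermit: each is adjacent to $u$, which has degree $3$, so a hermit here would violate \cref{lem:efficient_hermit_neighbors} (and for $v$ this also follows since $uv$ is crossed, whereas hermit edges are uncrossed by \cref{lem:hermit_uncrossed}); thus $\deg(v),\deg(x)\ge 3$. Having already produced the neighbors $u,w$ of each, it remains to exclude $\deg(v)=3$ and $\deg(x)=3$. I would do this by inspecting the faces incident to the uncrossed segment $\alpha v$ (one of which is the triangle $vw\alpha$) and to the uncrossed edges $wv$, $ux$, $wx$, again applying \cref{lem:uncrossed_edge} to harvest further neighbors, and ruling out the residual tight configurations by the same reroute/admissibility argument as for $w$ — in spirit parallel to how \cref{lem:degree3_2crossed_neighbordeg5} forces a large degree at the uncrossed neighbor of a T3-1 vertex. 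The delicate bookkeeping here is establishing distinctness: I would need to verify that the vertices found on the various faces are genuinely different from one another and from $u$ and $w$, which is where most of the remaining care is required.
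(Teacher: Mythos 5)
Your setup matches the paper's: \cref{lem:cradj} pins down the single crossing $\alpha$ of $uv$, the uncrossed arcs $u\alpha$ and $w\alpha$, and repeated use of \cref{lem:uncrossed_edge} on the faces $uw\alpha$, $vw\alpha$ and $xuw$ forces the uncrossed edges $wx$ and $wv$, giving the four distinct neighbors $u,v,x,b$ of $w$ and hence $\deg(w)\ge 4$. But the two steps that actually carry the lemma are left unproved, and you acknowledge as much. For $\deg(w)\ge 5$, the paper does not reroute a single edge: it \emph{swaps the positions of $u$ and $w$} in the drawing (possible because when $\deg(w)=4$ we have $N(u)\setminus\{w\}=\{v,x\}\subseteq N(w)$, so $u$ can reuse the uncrossed curves $wv,wx$ and $w$ can reuse $uv,ux$ and reach $b$ via the old arc from $\alpha$), which deletes the crossing at $\alpha$ outright and contradicts crossing-minimality. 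Your proposed reroute of ``$uv$ or the initial part of $wb$'' is not a substitute as stated: rerouting $wb$ to cross $ux$ instead of $uv$ is exactly \cref{prop:t32-choose} and changes no crossing counts, so it yields no contradiction; a working single-edge reroute exists (send $uv$ through $wx$, reducing the number of doubly crossed edges) but it has to be identified precisely, and you do not.

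The larger gap is $\min\{\deg(v),\deg(x)\}\ge 4$, which is the bulk of the paper's proof and for which you offer only ``the same reroute/admissibility argument as for $w$.'' The actual argument is structurally different: after reducing to $v$ by the symmetry of \cref{prop:t32-choose}, one assumes $\deg(v)=3$ with third neighbor $y$, shows $y\ne x$ and that $vy$ is crossed, then uses \cref{lem:crneighbor} to show that the first edge crossing $vy$ is a \emph{doubly crossed} edge $wt$ incident to $w$, and finally performs a coordinated redrawing of $wb$ and $wt$ whose outcome splits into two cases: if $vy$ was doubly crossed the new drawing has strictly fewer doubly crossed edges (contradicting admissibility, i.e.\ the secondary minimization), and if $vy$ was singly crossed the redrawing frees $vy$ to be uncrossed so that an uncrossed edge $uy$ can be added (contradicting maximality). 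None of these ingredients --- the vertex $y$, the edge $wt$, the case split on the crossing count of $vy$, or the essential use of the ``minimize doubly crossed edges'' property of admissible drawings --- appears in your sketch, so this part of the statement is not established.
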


\begin{figure}[htbp]
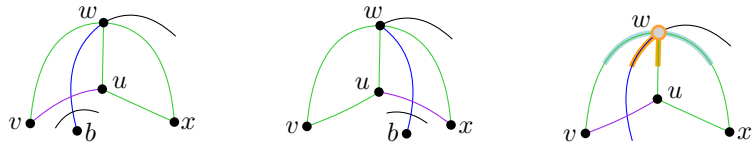

  \centering\hfill%
  \includegraphics[page=5]{summary_small}\hfill 
  \includegraphics[page=6]{summary_small}\hfill 
  \includegraphics[page=3]{summary_small}\hfill\hfill  
  \caption{Illustration of \cref{prop:t32-choose} (left and middle);
    halfedges claimed (marked orange) and assessed (marked lightblue)
    by a T3-2 vertex~$u$ (right).}\label{fig:t3-2stuff}
\end{figure}

A halfedge~$\half{wx}$ is \emph{peripheral} for a vertex~$u$ of~$G$ if
(1)~$u$ is a common neighbor of~$w$ and~$x$; (2)~$\deg(w)\ge 5$; and
(3)~$\deg(x)\ge 4$. In our charging scheme, every T3-2 vertex~$u$
claims three halfedges at the adjacent high-degree vertex~$w$: the
halfedge~$\half{uw}$, the doubly crossed halfedge~$\half{bw}$, and one
of the uncrossed peripheral halfedges~$\half{vw}$ or~$\half{xw}$; see
\cref{fig:t3-2stuff}~(right). While the former two are closely tied
to~$u$, the situation is more complicated for the latter two
halfedges.  Eventually, we need to argue that~$u$ can exclusively
claim (at least) one of the two 
peripheral halfedges. But for the time being we say that it
\emph{assesses} both of them.

\paragraph{\textbf{T3-3: all three incident edges uncrossed.}}
We say that such a vertex is of type T3-3. As an immediate consequence
of \cref{lem:uncrossed_edge} each T3-3 vertex~$u$ together with its
neighbors~$\mathrm{N}(u)$ induces a plane~$K_4$ in~$D$. We further
distiguish two subtypes of T3-3 vertices.

The first subtype accounts for the fact that there may be two adjacent
T3-3 vertices in~$D$. We refer to such a pair as an \emph{inefficient
  hermit}.
Observe that two T3-3 vertices~$z,z'$ that form an inefficient hermit
have the same neighbors in~$G\setminus\{z,z'\}$ by
\cref{lem:uncrossed_edge}. A T3-3 vertex that is part of an
inefficient hermit is called a \emph{T3-3 hermit}.

\begin{restatable}{lemma}{DegThreeHermit}\label{lem:deg3_inefficient_hermit}
  \renewcommand{\mylink}{\statlink{lem:deg3_inefficient_hermit}\prooflink{PDegThreeHermit}}
  Let~$z$ be a T3-3 vertex in~$D$, and let~$z'$ be a neighbor of~$z$
  in~$G$ with~$\deg(z')\le 3$. Then~$z'$ is also a T3-3 vertex, that
  is, the pair~$z,z'$ forms an inefficient hermit in~$D$.
\end{restatable}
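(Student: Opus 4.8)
The plan is to exploit the rigid plane $K_4$ that surrounds any T3-3 vertex. Recall the observation made just above (an immediate consequence of \cref{lem:uncrossed_edge}): the T3-3 vertex $z$ together with its three neighbors $\mathrm{N}(z)=\{a,b,c\}$ induces a plane $K_4$ in $D$. In particular, all four vertices $z,a,b,c$ are pairwise adjacent in $G$, and all six edges among them are uncrossed in $D$. This is the single structural fact that will do all the work.

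Next I would locate $z'$ inside this $K_4$. Since $z'$ is a neighbor of $z$, it coincides with one of $a,b,c$; without loss of generality say $z'=a$. As a vertex of the $K_4$, $z'$ is adjacent to each of the remaining three vertices $z,b,c$, so $\deg(z')\ge 3$. Combined with the hypothesis $\deg(z')\le 3$, this forces $\deg(z')=3$ and $\mathrm{N}(z')=\{z,b,c\}$: the degree bound leaves no room for any neighbor outside the $K_4$.

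It then remains to check the crossing condition. The three edges incident to $z'$, namely $z'z$, $z'b$, and $z'c$, are precisely three of the six edges of the plane $K_4$, hence all uncrossed in $D$. Thus $z'$ is a degree-three vertex all of whose incident edges are uncrossed, i.e.\ a T3-3 vertex. Since $z$ and $z'$ are then two adjacent T3-3 vertices, they form an inefficient hermit, which is exactly the claim.

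There is no substantial obstacle in this argument: once the plane $K_4$ structure is invoked, everything follows from the rigidity of that configuration. The only point requiring a moment of care is the degree-counting step, where $\deg(z')\le 3$ must be used to conclude that $\mathrm{N}(z')$ is confined to the three other $K_4$ vertices, so that all of $z'$'s incident edges are guaranteed to lie in the uncrossed $K_4$.
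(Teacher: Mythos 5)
Your proof is correct, but it takes a genuinely different route from the paper's. The paper proceeds by elimination over its taxonomy of low-degree vertices: it rules out $z'$ being a hermit via \cref{lem:efficient_hermit_neighbors} (both neighbors of a hermit have degree at least four), rules out T3-1 via \cref{lem:degree3_2crossed_neighbordeg5} (the neighbor along the unique uncrossed edge of a T3-1 vertex has degree at least five, but $\deg(z)=3$), and rules out T3-2 via \cref{lem:degree3_1crossed_degrees555} (all neighbors of a T3-2 vertex have degree at least four), leaving T3-3 as the only option. You instead argue directly from the plane $K_4$ induced by $z$ and $\mathrm{N}(z)$ (the paper's stated consequence of \cref{lem:uncrossed_edge}): membership in that $K_4$ forces $\deg(z')\ge 3$, the hypothesis caps it at $3$, so all three edges at $z'$ lie in the uncrossed $K_4$ and $z'$ is T3-3. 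Your argument is more elementary and self-contained, depending only on \cref{lem:uncrossed_edge} rather than on three structural lemmas about the other vertex types, and as a bonus it directly yields that $z$ and $z'$ share the same neighbors outside $\{z,z'\}$ --- a fact the paper records separately afterwards. The paper's version, in exchange, fits naturally into its classification-by-type narrative and requires no case distinction on which neighbor of $z$ the vertex $z'$ is.
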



\begin{restatable}{lemma}{IneffHermitThree}\label{lem:deg3hermit_neighbors}
  \renewcommand{\mylink}{\statlink{lem:deg3hermit_neighbors}\prooflink{PIneffHermitThree}}
  Let $z,z'$ be an inefficient hermit in~$D$, and let~$x,y$ be their
  (common) neighbors in~$G$. Then~$xy$ is an uncrossed edge in~$D$,
  and the degree of~$x$ and~$y$ 
  is at least five each.
\end{restatable}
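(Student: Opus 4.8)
The plan is to dispatch the claim about $xy$ immediately and then reduce the degree bound to a local analysis of the rotation at $x$ and~$y$. For the edge $xy$: since $z$ is a T3-3 vertex, the set $\{z\}\cup N(z)=\{z,z',x,y\}$ induces a plane $K_4$ in~$D$ (the consequence of \cref{lem:uncrossed_edge} recorded above). In particular $xy$ is present and, as an edge of a plane $K_4$, uncrossed. This also fixes the local picture: all six edges among $z,z',x,y$ are uncrossed, $N(z)=\{z',x,y\}$, and $N(z')=\{z,x,y\}$.

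Next I would record two structural facts. First, $\deg(x),\deg(y)\ge 4$: if, say, $\deg(x)\le 3$, then \cref{lem:deg3_inefficient_hermit} applied to the T3-3 vertex $z$ and its neighbor $x$ makes $x$ a T3-3 vertex forming an inefficient hermit with~$z$; then $z,z',x$ are three pairwise adjacent degree-three vertices all adjacent to~$y$, so all their edges stay inside $\{z,z',x,y\}$, and removing $y$ separates $\{z,z',x\}$ from the (non-empty, as $n\ge 5$) rest of $G$, contradicting \cref{thm:2connected}. Second, the triangles $(z,z',x)$ and $(z,z',y)$ are empty faces of~$D$: they are bounded by uncrossed edges, and any vertices inside $(z,z',x)$ are joined to the rest of $G$ only through~$x$ (since $z,z'$ have degree three and the bounding edges are uncrossed), which would make $x$ a cut vertex, again contradicting 2-connectivity. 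Consequently $xz,xz'$ are consecutive in the rotation at~$x$ (with the empty triangle between them), and likewise $yz,yz'$ at~$y$.

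For the degree bound I would argue by contradiction, treating $x$ and $y$ symmetrically; suppose $\deg(x)=4$ and let $q$ be the unique neighbor of $x$ outside $\{z,z',y\}$. Since $xz,xz'$ are consecutive and $x$ has only one further edge $xy$ besides $xq$, the edge $xq$ is cyclically adjacent to $xz$ or to $xz'$; say to~$xz$. Let $h$ be the face of $D$ in the wedge between $xz$ and $xq$ at~$x$. Because $xz$ is uncrossed, $z\in\partial h$, and then \cref{lem:uncrossed_edge} forces every vertex on $\partial h$ to be adjacent to~$z$, i.e.\ to lie in $\{z',x,y\}$. If $xq$ is uncrossed, then $q\in\partial h$, whence $q\in\{z',x,y\}$, contradicting that $q$ is a genuinely new neighbor. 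This already rules out $\deg(x)=4$ whenever the extra edge is uncrossed, and symmetrically yields $\deg(y)\ge 5$.

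The main obstacle is the case where $xq$ is crossed: then $\partial h$ turns at the first crossing of $xq$, so $q$ need not lie on $\partial h$ and the clean argument stalls. Here the vertex reached by $\partial h$ along the crossing edge $g$ still lies in $\{z',x,y\}$; it cannot be $z'$ (which carries only its uncrossed $K_4$-edges) nor $x$, so it is~$y$, meaning $g$ is a $y$-edge crossing the single $x$-edge $xq$ that leaves the empty ``pocket'' at~$x$. I expect to close this case by exploiting admissibility, namely crossing-minimality together with the minimality of the number of doubly crossed edges: one reroutes $g$ (or $xq$) through the empty triangular faces surrounding the $K_4$, thereby either reducing the number of crossings or doubly crossed edges, or exhibiting an addable edge, contradicting the choice of $D$ or the maximality of~$G$. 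This crossing analysis, rather than the rotation bookkeeping, is the crux of the proof, and by the $x$–$y$ symmetry the same argument gives $\deg(y)\ge 5$.
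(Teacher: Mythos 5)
Your treatment of the easy parts is sound and close to the paper's: $xy$ uncrossed via the plane $K_4$, the lower bound $\deg(x)\ge 4$ from $2$-connectivity, the emptiness of the triangles $xzz'$ and $yzz'$, and the observation that if the single extra edge $xq$ at a degree-four vertex $x$ were uncrossed then $q$ would lie on a face with $z$ and hence be adjacent to $z$, a contradiction. (One small point you gloss over: you should note that neither $xzz'$ nor $yzz'$ can be the \emph{outer} face of the $K_4$ subdrawing, since otherwise the other of $x,y$ would be trapped behind uncrossed edges and could not reach its neighbor in $V\setminus\{x,y,z,z'\}$; only then are these triangles genuinely empty and $xz,xz'$ consecutive at $x$.)

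However, there is a genuine gap at exactly the point you yourself flag as the crux: the case where the extra edge $e=xq$ is crossed. You correctly deduce that the first edge $g$ crossing $e$ has an uncrossed arc ending at $y$, but then you only state that you ``expect'' to close the case by some rerouting through the empty triangles, without exhibiting one. A local reroute of $g$ or of $e$ does not obviously eliminate the crossing: $g$ must still get from $y$ to the far side of $e$, and $e$ must still reach $q$. The paper's argument here needs two further ideas. First, the closed curve $C=x\chi y$ (along $xy$, the arc $x\chi$ of $e$, and the uncrossed arc $\chi y$ of $g$) is uncrossed, so by $2$-connectivity all of $V\setminus\{x,y,z,z'\}$ lies on one side of $C$; this forces $g$ to be one of the two specific edges $f,f'$ at $y$ that bound the faces containing the path $xz'y$ and the edge $xy$, respectively. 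Second, the contradiction is obtained not by rerouting $g$ or $e$ but by \emph{relocating} the vertices $x,z,z'$ (with their uncrossed $K_4$) into the face on the other side of $f$ and redrawing $e$ to follow $f$ up to $\chi$, which removes the crossing $\chi$ without creating any new one, contradicting crossing-minimality. Without this relocation step (or an equivalent), your proof of $\deg(x)\ge 5$ is incomplete.
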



In particular, \cref{lem:deg3hermit_neighbors} implies that every T3-3
hermit is part of exactly one inefficient hermit. In our charging
scheme, each T3-3 hermit claims three halfedges at one of its (two)
adjacent high-degree vertices. More precisely, let~$z,z'$ be an
inefficient hermit and let~$x,y$ be its neighbors in~$G$. Then the
vertices~$x,y,z,z'$ induce a plane~$K_4$ subdrawing~$Q$ of~$D$. The
vertex~$z$ claims the three halfedges of~$Q$ at~$x$, and~$z'$ claims
the three halfedges of~$Q$ at~$y$.

The second subtype is formed by those T3-3 vertices that are not T3-3
hermits; we call them \emph{T3-3 minglers}.
By \cref{lem:deg3_inefficient_hermit} all neighbors of a T3-3 mingler
have degree at least four.

\begin{restatable}{lemma}{DegreeThreeUncrossed}\label{lem:degree_three_uncrossed_neighbors}
  \renewcommand{\mylink}{\statlink{lem:degree_three_uncrossed_neighbors}\prooflink{PDegreeThreeUncrossed}}
  Let~$u$ be a T3-3 mingler in~$D$,
  and let~$v,w,x$ be its neighbors.
  Then each of~$v,w,x$ has degree at least four. Further, at least one
  vertex among~$v,w,x$ has degree at least six, or at least two
  vertices among~$v,w,x$ have degree at least five.
\end{restatable}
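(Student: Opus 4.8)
The first claim, that $v,w,x$ all have degree at least four, is immediate: since $u$ is a T3-3 mingler it has no neighbor of degree at most three, for otherwise \cref{lem:deg3_inefficient_hermit} would make $u$ part of an inefficient hermit and hence a T3-3 hermit rather than a mingler. The substance of the lemma is the second claim, which I would prove by contradiction. Combined with the first claim, the negation of the conclusion says that all of $v,w,x$ have degree in $\{4,5\}$ with at most one of them of degree five; that is, the degree multiset of $\{v,w,x\}$ is either $\{4,4,4\}$ or $\{5,4,4\}$. I would assume one of these two patterns and derive a contradiction.

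Before the case analysis I would pin down the local picture. As noted in the text, $\{u,v,w,x\}$ induces a plane $K_4$ by \cref{lem:uncrossed_edge}. I would strengthen this to: the three faces of $D$ incident to $u$ are exactly the triangles $uvw$, $uwx$, $uxv$, so that all six edges of the $K_4$ are uncrossed and $u$ is the unique vertex inside the triangle $vwx$. The key step is again \cref{lem:uncrossed_edge}: the face in the sector at $u$ between the uncrossed edges $uv$ and $uw$ cannot contain any boundary vertex other than $u,v,w$, since such a vertex would be adjacent to $u$ via an uncrossed edge, contradicting $\deg(u)=3$; hence $v$ and $w$ lie on a common face, so $vw$ is an uncrossed edge bounding the triangle $uvw$, and likewise for $wx$ and $xv$. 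Thus the entire analysis takes place on the outside of the empty triangle $vwx$, and each degree-four vertex among $v,w,x$ contributes exactly one \emph{outer} edge leaving the triangle.

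With this setup, I would organize the argument around the three outer faces $g_{vw},g_{wx},g_{xv}$ bordering the triangle from outside, together with the number of crossings (zero, one, or two) on each outer edge. Applying \cref{lem:uncrossed_edge} to an outer face forces any two of its boundary vertices to be adjacent via an uncrossed edge, which sharply limits the possibilities: if an outer edge at $v$ reaches its endpoint uncrossed and lands on the same face as $w$, then that endpoint must be a (scarce) common neighbor of $v$ and $w$, which in the tight degree patterns collapses the outer neighbors of $v,w,x$ to a single vertex $z$ adjacent to all three. I would then close each case using one of three tools. First, \cref{thm:2connected}: when the forced structure makes all edges of $u,v,w,x$ stay inside $\{u,v,w,x,z\}$, the vertex $z$ becomes a cut vertex (or the five vertices form a component), contradicting $2$-connectivity. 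Second, maximality: in the residual small cases where the forced adjacencies complete a $K_5$, a missing edge such as $uz$ can be inserted with a $2$-plane drawing, contradicting maximality. Third, admissibility of $D$: when an outer edge is crossed, a local rerouting of it across the empty triangle strictly decreases the number of crossings or of doubly crossed edges, contradicting the choice of $D$.

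I expect the main obstacle to be the sheer number of subcases and, within them, the geometric bookkeeping: one must track the rotation at each of $v,w,x$, the up to two crossings on each outer edge, and especially the coincidences among the three outer neighbors, which is exactly where the $2$-connectivity and edge-insertion contradictions are most delicate to make rigorous (one has to actually exhibit the addable edge, respectively the crossing-reducing reroute). The degree-five neighbor in the $\{5,4,4\}$ pattern needs separate attention, since its two extra edges give the drawing more freedom and spawn additional subcases; but the two tightly constrained degree-four neighbors should still pin down enough of the surrounding structure that the same three tools dispatch these subcases as well.
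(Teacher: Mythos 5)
Your first claim and the reduction to the two degree patterns $\{4,4,4\}$ and $\{5,4,4\}$ match the paper, and the tools you list (maximality, $2$-connectivity, crossing-minimality/admissibility of~$D$) are the right ones. But the core of your argument is a plan rather than a proof: you announce that you would ``organize the argument around the three outer faces'' and ``close each case using one of three tools,'' and you yourself flag the subcase explosion as the main obstacle --- yet none of the subcases is carried out, and it is not evident that the enumeration you describe (rotations at $v,w,x$, zero/one/two crossings per outer edge, coincidences among outer neighbors) terminates in a contradiction. In particular, the scenario you single out --- an outer edge reaching its endpoint uncrossed and landing on a face shared with another triangle vertex, collapsing everything onto a common neighbor $z$ --- never arises in the paper's proof, because the paper first shows that \emph{every} outer edge must be crossed: if the outer edge $e=vz$ were uncrossed, one could add an edge $uz$ leaving $u$ inside the triangle $uvw$, crossing $vw$ once and then following $e$ to $z$, contradicting maximality.

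The two structural claims that make the paper's proof short, and that are missing from your sketch, are: (a)~no two of the outer edges cross --- proved by swapping the positions of the two triangle vertices involved and rerouting their outer edges along each other up to the crossing, which strictly reduces the crossing count; and (b)~by \cref{lem:crneighbor}, the first edge crossing an outer edge $e$ at $v$ must have an endpoint joined to $v$ by an uncrossed edge, hence must itself be one of the other outer edges. Together, (a) and (b) are immediately contradictory in the $\{4,4,4\}$ case, and in the $\{5,4,4\}$ case they force the two extra edges at the degree-five vertex to be exactly the crossing partners of the outer edges at $w$ and $x$; the resulting unique local picture is then eliminated by flipping $u$ and $v$ across $wx$, saving two crossings. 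Without (a) and (b) your case analysis has no mechanism to close. A minor additional point: your claim that the three faces at $u$ are exactly the triangles of the $K_4$ does not follow from \cref{lem:uncrossed_edge} alone --- a doubly crossed edge could form a lens inside one of those triangles without placing any new vertex on the face boundary --- so, as the paper does, you need a preliminary redrawing of a $K_4$ edge along an uncrossed path to assume this without loss of generality.
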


Let~$Q$ denote the plane~$K_4$ induced by~$u,v,w,x$ in~$D$. In our
charging scheme, the T3-3 mingler~$u$ claims the three halfedges
of~$Q$ at one of its high-degree neighbors. That is, the vertex~$u$
assesses all of its (up to six) peripheral halfedges at high-degree
neighbors.

\subsection{The charging scheme}\label{sec:charge}

In this section we argue that our charging scheme works out, that is,
all claims made by low-degree vertices and T4-H vertices can be served
by adjacent high-degree vertices. \cref{fig:summary_low_deg} presents
a summary of the different types of vertices and their claims.

\begin{figure}[htbp]
  \hfill 
  \centering
  \subfigure[T3-1]{
    \includegraphics[page=1]{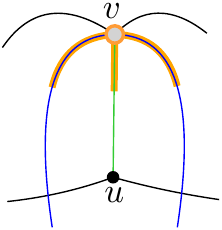}
    \label{fig:summary_low_deg1}}
  \hfill
  \subfigure[T3-2]{
    \includegraphics[page=3]{summary_low_deg}
    \label{fig:summary_low_deg2}
  }
  \hfill
  \subfigure[T3-3]{
    \includegraphics[page=2]{summary_low_deg}
    \label{fig:summary_low_deg3}
  }
  \hfill
  \subfigure[T4-H]{
    \includegraphics[page=4]{summary_low_deg}
    \label{fig:summary_low_deg4}
  }
  \hfill\hfill
  \caption{A vertex~$u$ with~$\deg(u)\in\{3,4\}$ and an adjacent
    high-degree vertex~$v$ at which~$u$ claims halfedges. Claimed
    halfedges are marked orange. Assessed halfedges are marked
    lightblue: A T3-2 vertex claims one of the two lightblue
    peripheral edges, and a T3-3 vertex claims a triple of halfedges
    at one of its high-degree neighbors.}
  \label{fig:summary_low_deg}
\end{figure}

For some halfedges it is easy to see that they are claimed at most
once; these halfedges are shown orange in
\cref{fig:summary_low_deg}. In particular, it is clear that a halfedge
that is incident to the vertex that claims it is claimed at most once.
We also need to consider the claims by hermits, which are not shown in
the figure (except for the hermit adjacent to a T4-H vertex).

\begin{restatable}{lemma}{ChargeHermit}\label{lem:c-hermits}
  \renewcommand{\mylink}{\statlink{lem:c-hermits}\prooflink{PChargeHermit}}
  Every halfedge claimed by a hermit is claimed by this hermit only.
\end{restatable}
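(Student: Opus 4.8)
The plan is to prove Lemma~\ref{lem:c-hermits} by carefully enumerating the halfedges that a hermit claims and arguing that none of them can be claimed by a second vertex. By the charging rule~\ref{c:1}, a hermit~$h$ with neighbors~$x,y$ claims, at each high-degree neighbor, the two halfedges $\half{hv}$ and $\half{uv}$ described in the paragraph following \cref{lem:degree4_hermit}, where $uv$ is the edge hosting~$h$. By \cref{lem:hermit_uncrossed}, the neighbors $x,y$ of $h$ are adjacent and all three edges $xy, hx, hy$ are uncrossed in $D$; moreover $xy$ is the unique base of~$h$. So the two halfedges claimed by $h$ at a high-degree neighbor $v\in\{x,y\}$ are precisely the halfedge $\half{hv}$ of the (uncrossed) spoke $hv$ and the halfedge $\half{wv}$ of the (uncrossed) base edge $wv=xy$, where $w$ is the other endpoint of the base.

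First I would dispose of the spoke halfedge $\half{hv}$. Since $h$ is a hermit (degree two), its only incident edges are $hx$ and $hy$; the halfedge $\half{hv}$ is the incidence of the edge $hv$ at its high-degree endpoint $v$. The key observation is that this halfedge is ``tied to $h$'': the only vertex whose charging rule could touch a halfedge of the form $\half{hv}$ is a vertex that is itself adjacent to $h$ via the charging rules. A quick scan of rules~\ref{c:1}--\ref{c:4} shows that halfedges of spokes incident to a hermit are claimed only by that hermit (rule~\ref{c:1}), since all the other rules (\ref{c:2}, \ref{c:3}, and the T3-x / T4-H claim descriptions) claim halfedges whose underlying edge is incident to the claiming low-degree vertex or lies on the boundary of the relevant triangular faces, and no low-degree vertex other than $h$ is incident to the edge $hv$. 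Hence $\half{hv}$ is claimed by $h$ alone.

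The more delicate part is the base halfedge $\half{wv}$, where $wv=xy$ is the base of~$h$. I would argue as follows. The only other type of claimant that could claim an uncrossed halfedge at $v$ is a T3-1, T3-2, or T3-3 vertex (degree-three vertices claim an uncrossed halfedge $\half{uv}$ in each case) or another hermit (rule~\ref{c:1}). To rule out a clash, I would use that the base edge $xy$ hosts exactly one hermit by \cref{lem:one_hermit_per_edge}, so a second hermit cannot claim $\half{wv}$ via its own base. For a degree-three claimant $u$, its claimed uncrossed halfedge has underlying edge $uv$ incident to $u$; but the base edge $xy$ is incident only to the two high-degree vertices $x,y$ (since $\deg(x),\deg(y)\ge 4$ by \cref{lem:efficient_hermit_neighbors}), neither of which is a degree-three or T4-H vertex, so $xy$ cannot be the claimed uncrossed edge of any such $u$. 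The one genuinely subtle case is the peripheral halfedge assessed by a T3-2 or T3-3 vertex, which need not be incident to the claiming vertex; here I would invoke the geometry in \cref{lem:degree4_hermit} and \cref{lem:hermit_uncrossed}, namely that the faces incident to the base $xy$ on the side of $h$ are bounded by $h$ and are not of the triangular doubly-crossed form that a T3-2 peripheral claim requires, so the base halfedge cannot simultaneously serve as a peripheral halfedge for another vertex.

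I expect the main obstacle to be exactly this last point: a peripheral halfedge is assessed rather than exclusively claimed, so I must verify that the uncrossed base halfedge $\half{wv}$ is never among the peripheral halfedges that a T3-2 or T3-3 vertex assesses at $v$. The clean way to handle this is to note that $\half{wv}=\half{xy}$ lies on the base edge, which by \cref{lem:hermit_uncrossed} is uncrossed and bounds a face containing the hermit $h$; by the face structure established for T3-2 vertices in \cref{lem:cradj} and \cref{lem:degree3_2crossed_neighbordeg5}, any peripheral halfedge assessed at $v$ is associated with a configuration of crossed/triangular faces incompatible with the hermit-hosting face. Since the assessment step is only resolved later (each T3-2/T3-3 vertex ultimately claims only \emph{one} of its peripheral halfedges), it suffices for this lemma to show that the base halfedge claimed by $h$ is not one that any other vertex \emph{can} claim; the assessment overlap, if any, is resolved in favor of $h$ when the peripheral claims are finalized in the subsequent argument.
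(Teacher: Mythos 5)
Your decomposition matches the paper's: the spoke halfedge $\half{hv}$ is unproblematic, other hermits are excluded via \cref{lem:one_hermit_per_edge}, and T3-1/T4-H vertices are excluded because their claimed uncrossed halfedge is incident to themselves (resp.\ they claim no uncrossed halfedge). The gap is exactly at the step you flag as subtle, and your proposed resolution there does not work. A peripheral halfedge assessed by a T3-2 or T3-3 vertex $z$ is an \emph{uncrossed} halfedge $\half{wx}$ whose only requirements are that $z$ is a common neighbor of $w$ and $x$ with $\deg(w)\ge 5$ and $\deg(x)\ge 4$; there is no ``triangular doubly-crossed form'' attached to it (you are conflating this with the structure of T3-1 and T4-H claims). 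The hermit's base $uv$ is uncrossed and its endpoints have degree at least four, so nothing in the definitions prevents a T3-2 or T3-3 vertex $z$, sitting on the opposite side of $uv$ from $h$, from being a common neighbor of $u$ and $v$ and assessing $\half{uv}$. The asserted ``incompatibility of face structures'' is simply false as stated, so this case is not closed.

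The paper closes it with a redrawing-plus-maximality argument you do not have: if such a $z$ existed, then (after applying \cref{prop:t32-choose} when $z$ is T3-2) the path $uzv$ is uncrossed, as is the path $uhv$; redrawing $uhv$ to run alongside $uzv$ creates a quadrilateral face $uhvz$ into which the uncrossed edge $hz$ can be added, contradicting the maximality of $G$. So no such $z$ exists at all --- the configuration is impossible, which is strictly stronger than what you try to establish. Your fallback of deferring the overlap to the later resolution of peripheral claims is also circular: the greedy procedures in \cref{lem:t3-2claims}, \cref{lem:t3-3-easy} and \cref{lem:t3-3-tricky} presuppose that the only competitors for a peripheral halfedge are other T3-2/T3-3 vertices (via \cref{lem:claims}), and it is precisely the present lemma that removes hermits from that competition. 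To repair your proof you need the redrawing argument, not a refinement of the bookkeeping.
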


The next lemma settles the validity of our charging scheme for T3-1
and T4-H vertices.

\begin{restatable}{lemma}{ClaimDouble}\label{lem:double}
  \renewcommand{\mylink}{\statlink{lem:double}\prooflink{PClaimDouble}}
  Every doubly crossed halfedge is claimed at most once.
\end{restatable}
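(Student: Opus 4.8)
The plan is to first restrict attention to the vertex types that actually claim doubly crossed halfedges. Hermits claim only the uncrossed halfedges $\half{hv}$ and $\half{uv}$ (both uncrossed by \cref{lem:hermit_uncrossed}), and every halfedge claimed by a T3-3 vertex lies in a plane $K_4$ and is therefore uncrossed. Hence the only vertices that can claim a doubly crossed halfedge are T3-1 vertices (two such halfedges each, by \cref{lem:degree3_2crossed_neighbordeg5}), T4-H vertices (two each, by \cref{lem:degree4_hermit}), and T3-2 vertices (the single halfedge $\half{bw}$, by \cref{lem:cradj} and \cref{lem:degree3_1crossed_degrees555}). It therefore suffices to show that no doubly crossed halfedge is claimed by two of these vertices.

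Next I would extract a single local certificate common to all three cases. I claim that whenever a vertex $u$ claims a doubly crossed halfedge $\half{e}$ at a high-degree vertex $b$, the uncrossed end-segment of $e$ at $b$ (from $b$ to the first crossing $c$ of $e$ encountered from $b$) bounds a triangular face whose three corners are the vertex $b$, the vertex $u$, and the crossing $c$; moreover $ub$ is uncrossed and, at $c$, the edge $e$ crosses an edge incident to $u$. For T3-1 and T4-H this is exactly the triangle described in \cref{lem:degree3_2crossed_neighbordeg5} and \cref{lem:degree4_hermit} (for T4-H after deleting the incident hermit, working in $D\setminus h$). For T3-2 the triangle is $u,w,\alpha$: its sides $uw$, the part of $wb$ between $w$ and $\alpha$, and the part of $uv$ between $u$ and $\alpha$ are all uncrossed, by \cref{lem:cradj} and the definition of a T3-2 vertex, so it is indeed a triangular face, and $\alpha$ is the first crossing of $wb=e$ encountered from $w=b$.

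The certificate immediately yields uniqueness per side: a triangular face of this kind has exactly two vertex-corners, one of which is $b$, so the claimer $u$ is the unique remaining vertex-corner. Consequently, two distinct vertices can claim the same halfedge $\half{e}$ at $b$ only if the uncrossed end-segment of $e$ at $b$ bounds such a triangle on \emph{both} of its sides. Both triangles then share the segment and the corner $c$ (the first crossing of $e$ from $b$ is unique); since the two edges meeting at $c$ are $e$ and the edge crossing it there, the two third-corners $u,u'$ must be the two endpoints of one edge $uu'$ that $e$ crosses at $c$. I thus reduce the lemma to excluding the configuration in which $b$ is joined to $u$ and to $u'$ by uncrossed edges, $uu'$ is crossed by $e$ exactly at $c$, and the part of $e$ between $b$ and $c$ lies inside the triangle $b\,u\,u'$ whose interior meets the drawing only in that segment.

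Ruling out this last configuration is the main obstacle. The idea is that $e$ is forced to be doubly crossed only because it enters the empty triangle $b\,u\,u'$ at the corner $b$ and exits through the opposite side $uu'$; rerouting the initial part of $e$ so that it leaves $b$ on the far side of $ub$ (or of $u'b$) and hugs the uncrossed edges bounding the triangle should let it avoid the crossing at $c$, contradicting either crossing-minimality of $D$ or, if the total number of crossings is preserved, the minimality of the number of doubly crossed edges that defines an admissible drawing. Making this precise is delicate: one has to verify that the rerouted curve remains simple and acquires no new crossings as it passes the vertices $u$ or $u'$, which is where the detailed face structure guaranteed by \cref{lem:degree3_2crossed_neighbordeg5}, \cref{lem:degree4_hermit}, and the simplicity of $D$ (\cref{thm:2planar_simple}) has to be invoked. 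Once this configuration is excluded, each doubly crossed halfedge admits a claiming triangle on at most one side, hence is claimed at most once, which completes the proof.
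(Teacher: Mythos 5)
Your setup is sound: restricting to T3-1, T3-2 and T4-H claimers, attaching to each claim a triangular face incident to the uncrossed end-segment of~$e$ at~$b$, and reducing to the configuration where this segment is flanked by two such triangles~$buc$ and~$bu'c$ with~$uu'$ singly crossed at~$c$ is all correct (and, after the degree observations you could extract from \cref{lem:degree4_hermit} and \cref{lem:degree3_1crossed_degrees555}, this configuration can only arise with \emph{two T3-1 claimers}, since a T4-H claim needs the crossing partner at~$c$ to be doubly crossed, and a T3-2 claim forces the other endpoint of the crossed edge to have degree at least four). The gap is in the final step, and it is not merely a matter of delicacy: the proposed rerouting does not produce a contradiction. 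To avoid the crossing at~$c$, the initial segment of~$e$ must reach the far side of~$uu'$ by going around~$u$ (or~$u'$), and since~$\deg(u)=3$ with rotation~$ub,uu',uw$, this forces a new crossing on the third edge~$uw$. If~$uw$ is doubly crossed the reroute is simply infeasible (and the symmetric reroute around~$u'$ can fail for the same reason). If~$uw$ is singly crossed, the reroute trades the crossing of~$e$ with~$uu'$ for a crossing of~$e$ with~$uw$: the total number of crossings is unchanged, so crossing-minimality is not violated, and the number of doubly crossed edges \emph{increases} ($uw$ becomes doubly crossed while $uu'$ merely drops from singly crossed to uncrossed), so admissibility is not violated either. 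Neither of the two minimality criteria you invoke is contradicted.

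The paper closes this case by a different device: it uses the \emph{second} crossing~$\beta$ of the doubly crossed edge~$e$. The edge~$pq$ crossing~$e$ at~$\beta$ has an endpoint~$p$ whose arc~$p\beta$ is uncrossed, and by walking along the uncrossed arcs~$p\beta$, $\beta c$ (of~$e$) and~$cu$ or~$cu'$ (of~$uu'$) one finds that~$p$ shares a face with~$u$ or~$u'$; \cref{lem:uncrossed_edge} then makes~$p$ an uncrossed neighbor of a T3-1 vertex, whose only uncrossed neighbor is~$b$, so~$p=b$ and~$pq$ crosses the adjacent edge~$e$, contradicting simplicity of~$D$. Some such use of the second crossing (or another idea of comparable strength) is needed; the local redrawing around~$c$ alone cannot finish the proof.
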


It remains to argue about the claims to peripheral halfedges by~T3-2
and~T3-3 vertices. Every T3-2 vertex assesses two peripheral halfedges
of which it needs to claim one, and every T3-3 vertex assesses three
pairs of halfedges of which it needs to claim one. In order to find a
suitable assignment of claims for these vertices it is crucial that
not too many vertices compete for the same sets of
halfedges. Fortunately, we can show that this is not the case. We say
that an edge of~$G$ is \emph{assessed} by a low-degree vertex~$u$ if
(at least) one of its corresponding halfedges is assessed by~$u$.

\begin{lemma}\label{lem:claims}
  Every edge is assessed by at most two vertices.
\end{lemma}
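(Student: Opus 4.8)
The plan is to fix an assessed edge $e=ab$ and bound the number of vertices that assess it by controlling the two sides of $e$ in the drawing~$D$. First I would record the structure common to all assessors. By the description of the charging scheme, only T3-2 vertices and T3-3 minglers ever assess a halfedge (hermits, T3-1, T4-H, and T3-3 hermits only \emph{claim}). Hence every assessor $u$ of $e$ is a degree-three vertex adjacent to both $a$ and $b$, and since every assessed halfedge is peripheral and uncrossed, $e$ itself is uncrossed in~$D$. I would then label the endpoints so that $a$ is a high-degree endpoint ($\deg(a)\ge 5$) and $\deg(b)\ge 4$. A short inspection of the two types shows that the edge $ua$ from an assessor to the high-degree endpoint is always uncrossed: for a T3-3 mingler all three edges at $u$ are uncrossed, and for a T3-2 vertex the assessed peripheral halfedge sits at its high-degree neighbor $w=a$, whose incident edge $uw$ is one of the two uncrossed edges at $u$ (the singly crossed edge of a T3-2 vertex runs to the lower-degree endpoint, and is governed by \cref{lem:cradj} and \cref{lem:degree3_1crossed_degrees555}). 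Thus for every assessor both $e=ab$ and $ua$ are uncrossed, while $ub$ is uncrossed for T3-3 minglers and singly crossed for T3-2 vertices.

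Since $e$ is uncrossed it bounds exactly two faces $f_1,f_2$ of~$D$, one on each side, and each assessor $u$ lies strictly on one side of~$e$. The plan is to assign each assessor to the face $f_i$ on its side and to show that each $f_i$ receives at most one assessor, giving the bound of two. The tool for this is the uncrossed triangle $T_u$ spanned by $ab$, $au$, and (the relevant part of) $ub$: because its boundary consists of uncrossed pieces, no edge of~$D$ can enter its interior across a side, so any vertex interior to $T_u$ can only attach to $a$, $b$, $u$, and interior vertices. For a T3-3 mingler, \cref{lem:uncrossed_edge} and \cref{lem:degree_three_uncrossed_neighbors} force the plane $K_4$ on $u,a,b,x$, so the only vertex that can sit inside $T_u$ is the fourth neighbor $x$, which has degree at least four and is therefore not a degree-three assessor. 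Comparing two candidate assessors on the same side via the uncrossed edges $au_1$, $au_2$, $ab$ then shows that one apex would have to lie in the other's triangle, which the emptiness analysis of $T_u$ rules out.

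The step I expect to be the main obstacle is making this at-most-one-per-side argument robust against two phenomena. The first is \emph{nesting}: for a T3-3 mingler the fourth vertex $x$ may lie inside $T_u$, in which case the face of~$D$ incident to $e$ on $u$'s side is the inner triangle $abx$ rather than the triangle containing $u$, so the naive face assignment breaks and I must instead argue via the induced $K_4$ that $u$ is still the unique degree-three apex reachable on that side. The second is the \emph{crossed side} of a T3-2 vertex, where $ub$ carries a single crossing by a doubly crossed edge incident to $a$; here I would use \cref{lem:cradj} and \cref{lem:degree3_1crossed_degrees555} to pin down precisely how that edge enters near $a$ and to identify the incident face, showing that the crossing cannot ``hide'' a second apex. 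The genuinely mixed cases—one T3-2 and one T3-3 assessor, or two T3-2 assessors sharing a side of $e$—then reduce to checking that the forced local configuration of the incident face is unique, so that a second assessor on the same side must coincide with the first. Combining the bound of one assessor per side over the two sides of $e$ yields that $e$ is assessed by at most two vertices.
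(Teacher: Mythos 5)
Your setup matches the paper's: every assessor of $e=ab$ is a T3-2 vertex or a T3-3 mingler, hence a degree-three common neighbor of $a$ and $b$, and (after applying \cref{prop:t32-choose} to T3-2 assessors) all edges from assessors to $a$ and $b$, as well as $e$ itself, may be taken uncrossed. The gap is in the contradiction mechanism. Your plan is to show that two assessors on the same side of $e$ are impossible because the inner apex ``would have to lie in the other's triangle, which the emptiness analysis of $T_u$ rules out.'' It does not: the emptiness of $T_u$ only forbids edges from \emph{crossing} its uncrossed boundary, so a second assessor $u'$ nested inside $T_u$, together with its own third neighbor $x'$ also placed inside $T_u$, attaches only to $a$, $b$ and $x'$ --- exactly what your analysis permits. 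The three-assessor configuration (a plane $K_{2,3}$ between $\{a,b\}$ and $\{u_0,u_1,u_2\}$ plus the edge $ab$, each $u_i$ with one private extra neighbor) is perfectly realizable as a plane, let alone $2$-plane, drawing. There is no local obstruction to kill it, and your proposal never invokes the one hypothesis that does kill it, namely the \emph{maximality} of $G$. (A secondary issue: with three assessors the ``middle'' one is not incident to either face of $D$ incident to $e$, so the per-side bookkeeping is not well defined without further work.)

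The paper's proof supplies exactly the missing global step. From the plane $K_{2,3}$ it deduces that $G\setminus\{a,b\}$ is disconnected --- each $u_i$ has only one neighbor besides $a,b$, whose edge enters only one of the two incident $K_{2,3}$-faces, so some assessor $u_0$ is separated from $u_1,u_2$. It then cuts out the entire subdrawing of $u_0$'s component (together with its edges to $a$ and $b$), re-pastes it into the face flanking the uncrossed path $au_1b$ that receives no edge from $u_1$, and flips it so that a quadrilateral face $au_1bu_0$ appears; an uncrossed edge $u_0u_1$ can then be added, contradicting the maximality of $G$. This cut-and-paste redrawing, followed by an edge insertion, is the heart of the argument, and it is precisely the kind of move that also disposes of your nesting and ``hidden apex'' cases, which cannot be resolved by inspecting the faces incident to $e$ alone.
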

\begin{proof}
  For a contradiction consider three vertices~$u_0,u_1,u_2$ of
  type~T3-2 or~T3-3 that assess one of the halfedges of an
  edge~$uv$. Then the edge~$uv$ is uncrossed in~$D$, and all
  of~$u_0,u_1,u_2$ are common neighbors of~$u$ and~$v$
  in~$G$. Moreover, we may suppose that all edges
  between~$u_0,u_1,u_2$ and~$u,v$ are uncrossed in~$D$: For T3-3
  vertices all incident edges are uncrossed, anyway, and for T3-2
  vertices this follows by \cref{prop:t32-choose}. In other words, we
  have a plane~$K_{2,3}$ subdrawing~$B$ in~$D$ between~$u_0,u_1,u_2$
  and~$u,v$. Let~$\phi_0,\phi_1,\phi_2$ denote the three faces of~$B$
  such that~$\partial\phi_i=uu_ivu_{i\oplus 1}$, where~$\oplus$
  denotes addition modulo~$3$.

  Consider some~$i\in\{0,1,2\}$. As~$\deg(u_i)=3$, there is exactly
  one vertex~$x_i\notin\{u,v\}$ that is adjacent to~$u_i$ in~$G$. The
  edge from~$u_i$ to~$x_i$ in~$D$ enters the interior of exactly one
  of~$\phi_i$ or~$\phi_{i\oplus 2}$. In other words, 
  for exactly one of~$\phi_i$ or~$\phi_{i\oplus 2}$,
  no edge incident to~$u_i$ enters its interior.
  It follows that~$G^-:=G\setminus\{u,v\}$ is
  disconnected, in particular, the vertices~$u_0,u_1,u_2$ are split
  into at least two components. Suppose without loss of generality
  that~$u_0$ is separated from both~$u_1$ and~$u_2$ in~$G^-$, and
  let~$C_0$ denote the component of~$G^-$ that contains~$u_0$.
  Let~$D_0$ denote the subdrawing of~$D$ induced by~$C_0$ along with
  all edges between~$C_0$ and~$u,v$. Observe that~$uu_0v$ is an
  uncrossed path along the outer face of~$D_0$.

  We remove~$D_0$ from~$D$ and put it back right next to the uncrossed
  path~$uu_1v$, in the face ($\phi_0$ or~$\phi_1$) incident to~$u_1$
  that is not entered by any edge incident to~$u_1$; see
  \cref{fig:claims} for illustration. Furthermore, we flip~$D_0$ with
  respect to~$u,v$ if necessary so as to ensure that the two uncrossed
  paths~$uu_1v$ and~$uu_0v$ appear consecutively in the circular order
  of edges incident to~$u$ and~$v$, respectively, in the resulting
  drawing~$D'$, effectively creating a quadrilateral
  face~$uu_1vu_0$. The drawing~$D'$ is an admissible drawing of~$G$,
  to which we can add an uncrossed edge~$u_0u_1$ in the
  face~$uu_1vu_0$, a contradiction to the maximality
  of~$G$. Therefore, no such triple~$u_0,u_1,u_2$ of vertices exists
  in~$G$.
\end{proof}

\begin{figure}[htbp]
  \hfill 
   \centering
    \subfigure[]{
    \includegraphics[page=1]{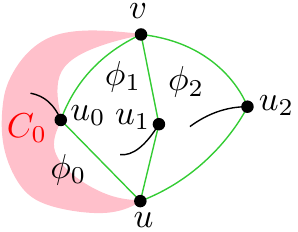}
 }
  \hfill
  \subfigure[]{
    \includegraphics[page=2]{claims}
  }
  \hfill\hfill
  \caption{Redrawing 
    in case that three vertices~$u_0,u_1,u_2$ claim a halfedge of the
    edge~$uv$.}
  \label{fig:claims}
\end{figure}

Note that \cref{lem:claims} settles the claims by T3-3 hermits, as
they come in pairs that assess the same halfedges. By
\cref{lem:claims} no other vertex assesses these halfedges, so our
scheme of assigning the halfedges at one endpoint to each works
out. It remains to consider T3-2 vertices and T3-3 minglers. Let us
start with the T3-2 vertices.
Consider an edge or halfedge~$e$ that is assessed by a low-degree
vertex~$u$. We say that~$e$ is \emph{contested} if there exists
another low-degree vertex~$u'\ne u$ that also assesses~$e$. An edge or
halfedge that is not contested is \emph{uncontested}.

\begin{lemma}\label{lem:t3-2claims}
  The claims of all T3-2 vertices can be resolved in a greedy manner.
\end{lemma}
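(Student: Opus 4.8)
The plan is to recast the resolution of the T3-2 claims as a bounded-degree orientation problem and to solve it component by component. Each T3-2 vertex~$u$ with high-degree neighbor~$w$ claims two ``safe'' halfedges at~$w$, namely~$\half{uw}$ and the doubly crossed~$\half{bw}$, which are claimed by~$u$ alone (the latter by \cref{lem:double}, both shown orange in \cref{fig:summary_low_deg}). So the only undetermined claim is which of the two uncrossed peripheral halfedges~$\half{vw}$,~$\half{xw}$ that~$u$ assesses it actually takes, and it is these choices we must make consistent.

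First I would build an auxiliary multigraph~$M$ whose nodes are the peripheral halfedges assessed by at least one T3-2 vertex and whose edges are the T3-2 vertices themselves, the edge for~$u$ joining~$\half{vw}$ to~$\half{xw}$. These two halfedges are distinct (since~$v\ne x$) and both uncrossed, hence different from~$\half{uw}$ and~$\half{bw}$; so every edge of~$M$ is a genuine edge and no T3-2 vertex conflicts with its own safe halfedges. The crucial point is that every node of~$M$ has degree at most two: a peripheral halfedge assessed by three T3-2 vertices would be assessed by three low-degree vertices in total, contradicting \cref{lem:claims}. Therefore~$M$ has maximum degree two and decomposes into vertex-disjoint paths and cycles (two T3-2 vertices that happen to share both peripheral halfedges merely form a length-two cycle).

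Resolving the claims is then equivalent to orienting every edge of~$M$ toward the endpoint it claims so that each node has in-degree at most one. This is where the path/cycle structure is used: on a path I orient all edges toward one fixed end, and on a cycle I orient the edges consistently around it; in either case every node receives in-degree at most one, so each peripheral halfedge is claimed by at most one T3-2 vertex. Phrased greedily, one repeatedly lets any T3-2 vertex incident in~$M$ to a node of degree one claim that node and then deletes it---peeling the paths off from their endpoints---and finishes each remaining cycle with a single consistent orientation.

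The main obstacle to keep in mind is that a naive greedy in an arbitrary order can get stuck on an even cycle of~$M$, where both peripheral halfedges of some T3-2 vertex have already been taken; the remedy is exactly to exploit~$\Delta(M)\le 2$ and orient whole components instead of deciding locally. Beyond that, the only substantive ingredient is the degree bound, which \cref{lem:claims} supplies. For the later T3-3 step I would also record that a peripheral halfedge shared with a T3-3 mingler has~$M$-degree one and hence lies at the end of a path, so the orientation retains the freedom to leave such a halfedge unclaimed, as the subsequent argument will require.
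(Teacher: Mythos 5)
Your core argument is correct and is essentially the paper's proof in different clothing: the paper runs a greedy chain starting from an arbitrary T3-2 vertex, where each newly claimed contested halfedge hands control to the unique other vertex assessing it (\cref{lem:claims}), and these chains are exactly the paths and cycles of your multigraph~$M$ restricted to T3-2 vertices. Your degree bound $\Delta(M)\le 2$ is the same application of \cref{lem:claims}, and the consistent orientation of each component is the same resolution the paper's chain produces. Phrased either way, this correctly ensures that no peripheral halfedge is claimed by two T3-2 vertices, and the conflicts with hermits, T3-1 and T4-H vertices are already excluded by \cref{lem:c-hermits} and \cref{lem:double}. One small benefit of your formulation is that it makes termination and the impossibility of getting stuck on a cycle completely transparent.

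The genuine divergence is the treatment of T3-3 minglers, and this is where your proposal has a gap. The paper's chain does \emph{not} stop when the contested partner of a claimed halfedge~$\half{w_iv_i}$ is a T3-3 mingler: it immediately resolves that mingler by letting it claim the opposite halfedge~$\half{v_iw_i}$ together with a second halfedge~$\half{x_{i+1}w_i}$ at the same high-degree vertex, and then continues the chain from~$\half{x_{i+1}w_i}$. You instead defer all minglers, and your closing claim---that a halfedge shared with a mingler has $M$-degree one, lies at the end of a path, and so ``the orientation retains the freedom to leave such a halfedge unclaimed''---is not true in general: a path of~$M$ may have \emph{both} endpoints shared with T3-3 minglers, and any orientation with in-degree at most one must direct the last edge into one of the two ends, so at least one such shared halfedge \emph{is} claimed by a T3-2 vertex. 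This does not immediately sink the overall scheme (the proof of \cref{lem:t3-3-easy} explicitly tolerates the other assessor having claimed one halfedge of a shared edge), but the circular-sequence argument in \cref{lem:t3-3-tricky} relies on every contested peripheral edge of a remaining tricky mingler being contested by another tricky mingler, which the paper's on-the-fly resolution of minglers inside the T3-2 chains helps guarantee. If you want to keep your deferred version, you need to either prove that every mingler sharing a halfedge with a T3-2 vertex becomes easy after the T3-2 phase, or mimic the paper and let your orientation procedure pass through minglers, assigning them the opposite halfedge plus a second one at the same vertex and continuing the component from there.
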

\begin{proof}
  Let~$u$ be a T3-2 vertex in~$D$, and let~$\half{wv}$ and~$\half{xv}$
  denote the halfedges that~$u$ assesses.
  We start a sequence of greedy selections for the claims of
  vertices~$u_1,u_2,\ldots,u_k$ by letting~$u_1:=u$ claim one
  of~$\half{wv}$ and~$\half{xv}$ arbitrarily, say, let~$u$
  claim~$\half{wv}$ (and withdraw its assessment of~$\half{xv}$).
  More generally, at the~$i$-th step of our selection procedure we
  have a vertex~$u_i$ that has just claimed one of its assessed
  halfedges~$\half{w_iv_i}$. By \cref{lem:claims} there is at most one
  other vertex~$u_{i+1}$ that also assesses~$\half{w_iv_i}$. If no
  such vertex~$u_{i+1}$ exists, then we are done and the selection
  procedure ends here with~$i=k$.  Otherwise, we consider two cases.

  \begin{case}[$u_{i+1}$ is a T3-2 vertex]
    Then there is only one other (than~$\half{w_iv_i}$) halfedge
    that~$u_{i+1}$ assesses, denote it by~$\half{x_{i+1}v_i}$. We
    let~$u_{i+1}$ claim~$\half{x_{i+1}v_i}$ and proceed with the next
    step.
  \end{case}

  \begin{case}[$u_{i+1}$ is a T3-3 mingler]
    Then~$u_{i+1}$ also assesses~$\half{v_iw_i}$, which is uncontested
    now, and it also assesses a second halfedge~$\half{x_{i+1}w_i}$
    at~$w_i$. We let~$u_{i+1}$ claim both~$\half{v_iw_i}$
    and~$\half{x_{i+1}w_i}$ and then proceed with the next step.
  \end{case}

  For the correctness of the selection procedure it suffices to note
  that at every step exactly one halfedge is claimed that is (still)
  contested, and the claims of the (unique) vertex that assesses this
  halfedge are resolved in the next step. 
  In particular, at the end of the 
  procedure, all (still) assessed edges are unclaimed. As long as
  there exists another T3-2 vertex in~$D$ that has not claimed one of
  the two halfedges it requires, we start another selection procedure
  from there. Thus, eventually the claims of all T3-2 vertices are
  resolved.
\end{proof}

At this point it only remains to handle the claims of the remaining T3-3
minglers. They are more tricky to deal with compared to the T3-2
vertices because they require \emph{two} halfedges at a single
high-degree vertex. We may restrict our attention to a
subclass of T3-3 minglers which we call tricky, as they assess a
directed $3$-cycle of contested halfedges. Consider a T3-3
mingler~$u$, and let~$v,w,x$ be its neighbors in~$G$.  We say that~$u$
is \emph{tricky} if (1)~it assesses all six halfedges among its
neighbors and (2)~all of the halfedges~$\half{vw},\half{wx},\half{xv}$
or all of the halfedges~$\half{vx},\half{xw},\half{wv}$ (or both sets)
are contested. A T3-3 mingler that is not tricky is \emph{easy}.


\begin{lemma}\label{lem:t3-3-easy}
  The claims of all easy T3-3 minglers can be resolved in a
  postprocessing step.
\end{lemma}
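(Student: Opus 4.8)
The plan is to resolve the easy T3-3 minglers greedily in a final pass, after the claims of all other low-degree vertices---the hermits, T4-H vertices, T3-1, T3-2, T3-3 hermits, and the tricky minglers, which are handled by the separate (harder) argument---have already been fixed; any halfedge claimed by one of these is simply treated as unavailable. First I would record the situation inherited from \cref{lem:t3-2claims}: once the T3-2 vertices are processed, every remaining contested peripheral halfedge is shared with another T3-3 mingler, and by \cref{lem:claims} it is shared with exactly one. So the only competition an easy mingler faces is with other minglers over these doubly-assessed halfedges, where each such mingler~$u$ with neighbors~$v,w,x$ must commit to a single high-degree neighbor and take the induced pair of halfedges there---one from each of the two directed triangles $F=\{\half{vw},\half{wx},\half{xv}\}$ and $R=\{\half{vx},\half{xw},\half{wv}\}$ on its neighborhood.

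Next I would translate ``easy'' into two usable structural consequences. If $u$ does not assess all six halfedges, then by \cref{lem:degree_three_uncrossed_neighbors} at most two of its neighbors are high-degree, so $u$ has at most two candidate neighbors and correspondingly fewer shared halfedges to avoid. If $u$ does assess all six, then neither $F$ nor $R$ is fully contested, so each of the two triangles contains an uncontested halfedge; these two uncontested halfedges serve as \emph{anchors}. In either case $u$ possesses a candidate pair in which at least one halfedge is uncontested, hence already free once all other claims are placed.

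The core of the argument is a greedy selection over the unresolved easy minglers. I would build a dependency digraph whose vertices are these minglers, with an arc $u\to u'$ whenever every currently feasible pair of $u$ is blocked by a halfedge that $u$ shares (is contested) with~$u'$. Whenever a mingler has a pair of available halfedges, it claims that pair; claiming frees the partner's copy of each shared halfedge, and by the anchor property the displaced partner still retains a pair containing an uncontested halfedge. The key point to verify is acyclicity: tracing a directed cycle through the shared halfedges would force, around the neighborhood of some single mingler, a fully contested directed triangle on $F$ or~$R$---which is exactly the configuration excluded by the definition of \emph{tricky}. Hence among easy minglers the digraph is acyclic, a source exists at every step, and processing sources in order resolves all claims.

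The main obstacle I anticipate is precisely this acyclicity step: I must argue carefully that no chain of forced conflicts among easy minglers closes into a cycle, isolating the fully-contested-triangle obstruction and showing it can arise only from a tricky mingler. Once that is established, the remaining bookkeeping---that each claimed pair occupies two halfedges at a neighbor of degree at least five, and that the displaced competitor always falls back on its anchored pair---follows routinely from \cref{lem:claims} and the peripheral-halfedge conditions.
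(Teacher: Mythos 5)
There is a genuine gap at the crux of your argument. You correctly isolate the two ingredients that make easy minglers tractable---the easiness condition gives an uncontested ``anchor'' halfedge in each of the two directed triangles $F$ and $R$, and \cref{lem:claims} limits the competition on each peripheral edge to one other vertex---but you then route everything through a dependency digraph whose acyclicity you explicitly leave unproven. The justification you sketch for it does not work: a directed cycle in your digraph passes through \emph{several distinct} easy minglers, each blocked at a different high-degree neighbor, and there is no reason such a cycle would collapse to a fully contested directed triangle ($F$ or $R$) in the neighborhood of a \emph{single} mingler, which is the only configuration that the tricky/easy dichotomy excludes. As written, the proposal could fail exactly where you predict it might.

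The point you are missing is that no ordering or dependency argument is needed at all. Two invariants hold throughout any processing order: (i)~an uncontested halfedge is assessed by $u$ alone, so no other vertex ever claims it and it stays available forever; and (ii)~since every vertex claims halfedges at one vertex only, the unique other assessor of an edge $vx$, $xw$, or $wv$ can claim at most one of that edge's two halfedges, so at least one halfedge of each such edge is always unclaimed. Given one uncontested anchor in $F$ (say $\half{vw}$) and one in $R$, a short case analysis on where the second anchor sits ($\half{vx}$, $\half{xw}$, or $\half{wv}$) produces in every case an unclaimed pair of halfedges at a single neighbor that $u$ can take---for instance, if the second anchor is $\half{wv}$ and both $\half{xw}$ and $\half{xv}$ happen to be claimed, then by (ii) both $\half{wx}$ and $\half{vx}$ are unclaimed and form a valid pair at $x$. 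This is what the paper does: the easy minglers are simply processed one by one in arbitrary order, and each always succeeds. Your ``anchor'' observation is the right starting point, but you need to carry it through this case analysis rather than defer to an acyclicity claim that you cannot establish.
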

\begin{proof}
  Let~$M$ denote the set of easy T3-3 minglers in~$D$. We remove~$M$
  along with all the corresponding assessments from consideration, and
  let all other (that is, tricky) T3-3 minglers make their claims. We
  make no assumption about preceding claims, other than that every
  vertex (1)~claims edges incident to one vertex only and (2)~claims
  only edges it assesses. After all tricky T3-3 minglers have made
  their claims, we process the vertices from~$M$, one by one, in an
  arbitrary order. In the following, the terms \emph{assessed} and
  \emph{(un)contested} refer to the initial situation, before any
  claims were made. The current state of a halfedge is described as
  either \emph{claimed} or \emph{unclaimed}. 
  
  Consider a vertex $u\in M$. If not all six halfedges are assessed by
  $u$, then not all of its neighbors are high-degree,
  in which case, at most one peripheral edge of $u$ is contested.
  Thus, there always exists one pair of halfedges that is unclaimed and
  can be claimed by $u$.
  In the other case, let~$H$ denote the set of six
  halfedges that are assessed by~$u$. By~(2) every uncontested
  halfedge in~$H$ is unclaimed. By \cref{lem:claims} every edge is
  assessed by at most two vertices. Thus, for each of the
  edges~$vx,xw,wv$ at most one vertex other than~$u$ assesses this
  edge. This other vertex may have claimed a corresponding halfedge,
  but by~(1) for every edge~$vx,xw,wv$ at least one of its two
  corresponding halfedges is unclaimed.

  As~$u$ is easy, at least one of~$\half{vw},\half{wx},\half{xv}$ and
  at least one of~$\half{vx},\half{xw},\half{wv}$ is
  uncontested. Suppose without loss of generality that~$\half{vw}$ is
  uncontested. We conclude with three cases.

  \begin{case}[$\half{vx}$ is uncontested] 
    At least one of~$\half{xw}$ or~$\half{wx}$ is unclaimed. Thus, we
    can let~$u$ claim one of the pairs~$\half{xw},\half{vw}$
    or~$\half{wx},\half{vx}$.
  \end{case}
  
  \begin{case}[$\half{xw}$ is uncontested]
    Then we let~$u$ claim~$\half{vw},\half{xw}$.
  \end{case}  

  \begin{case}[$\half{wv}$ is uncontested]
    If one of~$\half{xw}$ or~$\half{xv}$ is unclaimed, then we let~$u$
    claim it together with the matching halfedge of the edge~$vw$,
    which is uncontested by assumption. Otherwise, both~$\half{xw}$
    and~$\half{xv}$ are claimed. Then both~$\half{wx}$ and~$\half{vx}$
    are unclaimed, and so~$u$ can safely claim these two
    halfedges.  
  \end{case}
  %
\end{proof}

It remains to resolve the claims of tricky T3-3 minglers. Note that
the classification tricky vs.~easy depends on the other T3-3
minglers. For instance, a T3-3 mingler that is tricky initially may
become easy after removing another easy T3-3 mingler. Here we have to
deal with those T3-3 minglers only that remain tricky after all easy
T3-3 minglers have been iteratively removed from consideration.

\begin{lemma}\label{lem:t3-3-tricky}
  The claims of all tricky T3-3 minglers can be resolved in a greedy manner.
\end{lemma}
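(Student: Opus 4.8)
The plan is to set up a combinatorial structure that captures the competition among tricky T3-3 minglers for peripheral halfedges, and then resolve all their claims greedily by exploiting the cyclic structure forced by trickiness. A tricky mingler~$u$ with neighbors~$v,w,x$ assesses all six halfedges~$\half{vw},\half{wx},\half{xv}$ and~$\half{vx},\half{xw},\half{wv}$, and (by definition) at least one of the two directed triangles of halfedges is entirely contested. Each directed triangle corresponds to orienting the edges of the triangle~$vwx$ consistently, and claiming one such triangle means claiming one halfedge at each of~$v,w,x$ along a common rotational direction. The key observation I would exploit is that each contested halfedge is contested by \emph{exactly one} other low-degree vertex (by \cref{lem:claims}, every edge is assessed by at most two vertices, hence each halfedge by at most two). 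So the dependencies among tricky minglers form a structure of bounded degree, which is what makes a greedy resolution possible.

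First I would build a \emph{conflict digraph}~$\Gamma$ on the tricky T3-3 minglers (together with any other low-degree vertices still competing for the relevant halfedges, such as T3-3 minglers or T3-2 vertices whose claims were left unresolved). I would direct an arc from~$u$ to~$u'$ whenever~$u$, by claiming one of its two directed triangles, would claim a halfedge that~$u'$ also assesses; since each contested halfedge has at most one other assessor, each of the six halfedges of~$u$ points to at most one other vertex, so~$\Gamma$ has bounded out-degree. The heart of the argument is that a tricky mingler always has \emph{two} candidate triangles to choose from, and committing to one of them blocks at most one competitor per shared halfedge. By a discharging or orientation argument on~$\Gamma$, I would show that we can always process the minglers in an order (analogous to the selection procedures in \cref{lem:t3-2claims} and \cref{lem:t3-3-easy}) so that each mingler, when processed, still has at least one of its two directed triangles fully unclaimed.

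Concretely, I would run a greedy procedure: start at some tricky mingler~$u$, let it claim one of its two contested triangles arbitrarily, and then follow the chain of conflicts—each halfedge it just claimed may have been assessed by exactly one other vertex, and we resolve that vertex next by forcing it onto its \emph{other} triangle (the one not touching the just-claimed halfedges). Because each mingler has two edge-disjoint triangle options and each claim invalidates at most one option of a neighboring mingler, the ``other triangle'' always remains available, exactly as the alternating selection in the T3-2 proof guarantees a free halfedge at each step. When a chain closes up or dead-ends (reaching a vertex with no further uncontested successor), we terminate that phase and restart from any still-unresolved tricky mingler, mirroring the repeated-selection structure of \cref{lem:t3-2claims}.

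The main obstacle I anticipate is handling the \emph{cyclic} dependencies: unlike the T3-2 case, where each vertex needs only a single halfedge (so conflicts form simple paths that cannot create an unresolvable cycle), a tricky mingler needs a whole triangle, so a cycle of minglers each blocking the next could in principle force a vertex back onto a triangle that has already become partly claimed. The crux of the proof is therefore to show that the two-triangle freedom of trickiness, combined with the at-most-two-assessors bound of \cref{lem:claims}, rules out such an unresolvable cycle—i.e.\ that even around a cycle each mingler retains a fully free triangle. I expect this to come down to a parity or alternation argument: along any conflict cycle the forced-triangle choices alternate orientation, so no halfedge is demanded twice; making this alternation precise, and verifying it also absorbs the residual T3-2 and easy-mingler interactions already partially resolved in the earlier lemmas, is the delicate part of the argument.
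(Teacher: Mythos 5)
There is a genuine gap here, and it starts with a misreading of what a T3-3 mingler must claim. In the paper's charging scheme a degree-three vertex claims \emph{three halfedges at a single high-degree neighbor} (cf.~\ref{c:2} and the description at the end of \cref{sec:degreethree}): for a mingler~$u$ with neighbors~$v,w,x$ the three admissible options are the pair of peripheral halfedges~$\half{wv},\half{xv}$ at~$v$ (together with~$\half{uv}$), or the analogous pairs at~$w$ or at~$x$. Your proposal instead has~$u$ claim a ``directed triangle'', i.e.\ one halfedge at each of~$v,w,x$. The two directed $3$-cycles~$\half{vw},\half{wx},\half{xv}$ and~$\half{vx},\half{xw},\half{wv}$ appear in the paper only as the obstruction pattern in the \emph{definition} of tricky (they encode that at every neighbor at least one of the two needed peripheral halfedges is contested); they are not the objects being claimed. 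A resolution that spreads~$u$'s claim over three different vertices would break the degree accounting in inequality~\eqref{eq:deg3}, which charges each served degree-three vertex as three halfedges at one vertex, and it also violates the invariant ``every vertex claims edges incident to one vertex only'' that \cref{lem:t3-3-easy} relies on.

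Independently of that, the step you yourself identify as the crux --- that around a conflict cycle ``the other triangle always remains available'' --- is precisely what needs to be proved, and the proposal only gestures at ``a parity or alternation argument''. The paper's proof supplies the missing structure: because every peripheral edge of a tricky mingler is contested by another \emph{tricky} mingler, and any such contester of an edge~$vw$ is again a common neighbor of~$v$ and~$w$, the tricky minglers sharing edges with~$u$ at a fixed neighbor~$v$ form a \emph{circular sequence}~$u_1,\ldots,u_k$ around~$v$ in~$D$. On this cycle the paper lets the odd-indexed minglers claim their two halfedges at~$v$, which automatically renders the even-indexed ones easy (both of their halfedges at~$v$ become claimed, so the reverse halfedges become uncontested, destroying the directed $3$-cycle of contested halfedges); the odd-$k$ case needs an extra ``bold'' claim at a vertex other than~$v$, which is then discharged by rerunning the greedy chain of \cref{lem:t3-2claims}. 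Without locating the conflicts around a common vertex, your generic conflict digraph does not rule out a mingler having halfedges from both of its options consumed by two different competitors, so the greedy procedure as described can get stuck.
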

\begin{proof}
  Let~$u$ be a tricky T3-3 mingler, and let~$v,w,x$ be its neighbors
  in~$G$. As for each tricky T3-3 mingler all three peripheral edges
  are contested by other tricky T3-3 minglers, there exists a circular
  sequence~$u=u_1,\ldots,u_k$, with~$k\ge 2$, of tricky T3-3 minglers
  that are neighbors of~$v$ in~$G$ and whose connecting edges appear
  in this order around~$v$ in~$D$. We distinguish two cases, depending
  on the parity of~$k$.

  \begin{case}[$k$ is even]\label{case:1}
    Then we let each~$u_i$, for~$i$ odd, claim the two halfedges
    at~$v$ that it assesses. This resolves the claims for all~$u_i$,
    with~$i$ odd, and we claim that now all~$u_i$, with~$i$ even, are
    easy. To see this, consider a vertex~$u_i$, with~$i$ even. Both of
    its assessed halfedges at~$v$ are now claimed; denote these
    halfedges by~$\half{w_iv}$ and~$\half{x_iv}$. It follows that
    both~$\half{vw_i}$ and~$\half{vx_i}$ are unclaimed and~$u_i$ is
    the only vertex that still assesses them. As there is no directed
    $3$-cycle of contested halfedges among the halfedges assessed
    by~$u_i$ anymore, the vertex~$u_i$ is easy.
  \end{case}

  \begin{case}[$k$ is odd]
    Then we let each~$u_i$, for~$i<k$ odd, claim the two halfedges
    at~$v$ that it assesses. This resolves the claims for these~$u_i$
    and makes all~$u_i$, with~$i<k-1$ even, easy, as in \cref{case:1}
    above. It remains to argue about~$u_{k-1}$ and~$u_k$. Let~$x_iv$
    denote the edge assessed by both~$u_i$ and~$u_{i+1}$,
    for~$1\le i<k$, and let~$x_kv$ denote the edge assessed by
    both~$u_k$ and~$u_1$. As~$\half{x_kv}$ is claimed by~$u_1$, we can
    let~$u_k$ claim~$\half{vx_k}$ and~$\half{x_{k-1}x_k}$ along with
    it. This makes~$u_{k-1}$ easy, as both~$\half{vx_{k-2}}$
    and~$\half{vx_{k-1}}$ are uncontested now. However, we still need
    to sort out the bold claim on~$\half{x_{k-1}x_k}$ by~$u_k$. To
    this end, we apply the same greedy selection procedure as in the
    proof of \cref{lem:t3-2claims}, except that here we start with the
    selection of~$\half{x_{k-1}x_k}$, as the only contested halfedge
    that is claimed, and here we can only encounter (tricky) T3-3
    minglers over the course of the procedure.
  \end{case}

  We get rid of at least two tricky T3-3 minglers, either by resolving
  their claims or by making them easy. Thus, after a finite number of
  steps, no tricky T3-3 mingler remains.
\end{proof}

Our analysis of the charging scheme is almost complete now. However,
we still need to justify our claim about degree five vertices in
Property~\ref{c:4}. In principle it would be possible that two
halfedges at a degree five vertex are claimed by a hermit and the
remaining three by a degree-three vertex. But we can show that this is
impossible.

\begin{restatable}{lemma}{DegreeFive}\label{lem:deg5_atmost_one_low_deg}
  \renewcommand{\mylink}{\statlink{lem:deg5_atmost_one_low_deg}\prooflink{PDegreeFive}}
  At most one low-degree vertex claims halfedges at a degree five
  vertex.
\end{restatable}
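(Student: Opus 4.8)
The plan is to show that a degree-five vertex $v$ cannot simultaneously serve two distinct low-degree vertices. The only way the charging scheme could assign more than one low-degree vertex to $v$ is if $v$ is served by exactly two of them that together consume at most $\lfloor 5/3\rfloor\cdot 3 = 3$ halfedges in a compatible way. Inspecting the claim rules, the dangerous scenario is the one flagged in the text: a hermit $h$ claims two halfedges at $v$ (by~\ref{c:1}), and a degree-three vertex $u$ claims three halfedges at $v$ (by~\ref{c:2}). Since $\deg(v)=5$ there are only five halfedges at $v$, so these two sets of claims would have to be disjoint and partition all five. I would rule this out by deriving a contradiction from the simultaneous structural constraints that $h$ and $u$ each impose on the local drawing around $v$.

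\textbf{Setting up the local picture.} First I would use \cref{lem:hermit_uncrossed}: if $h$ is a hermit with neighbors $v,v'$, then $vv'$, $hv$, $hv'$ are all uncrossed, $v$ and $v'$ are adjacent, and $h$ sits in a triangular region bounded by these three uncrossed edges. The two halfedges $h$ claims at $v$ by~\ref{c:1} are $\half{hv}$ (uncrossed) and $\half{v'v}$ (the halfedge of the base edge at $v$, also uncrossed). Next I would split by the type of the degree-three vertex $u$. If $u$ is a T3-1 vertex claiming halfedges at $v$, then by \cref{lem:degree3_2crossed_neighbordeg5} the three halfedges $u$ claims at $v$ are one uncrossed halfedge $\half{uv}$ flanked by two \emph{doubly crossed} halfedges at $v$. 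If instead $u$ is a T3-2 or T3-3 mingler, the claimed triple at $v$ again has a specific crossed/uncrossed signature governed by \cref{lem:degree3_1crossed_degrees555} and \cref{lem:degree_three_uncrossed_neighbors}. In each case the five halfedges at $v$ would be forced to carry a prescribed pattern of crossed and uncrossed edges.

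\textbf{The contradiction.} The heart of the argument is a counting/rotation constraint on the five edges incident to $v$. The hermit contributes two uncrossed halfedges ($\half{hv}$ and $\half{v'v}$) that must appear \emph{consecutively} around $v$ (they bound the triangular face hosting $h$, together with $hv'$). The degree-three vertex $u$, in the T3-1 case, needs its three claimed halfedges to consist of an uncrossed $\half{uv}$ sandwiched between two doubly crossed halfedges, and these must be consecutive around $v$ as well (they bound the two triangular faces incident to the uncrossed edge $uv$). Having both of these consecutive blocks occupy disjoint portions of a cyclic sequence of only five edges forces the doubly crossed edges of $u$'s block to be adjacent in the rotation to the uncrossed block of $h$; I would then argue that this adjacency creates a face (or a pair of vertices on a common face) to which \cref{lem:uncrossed_edge} applies, yielding either an uncrossed edge contradicting the doubly crossed status of one of $u$'s claimed halfedges, or an addable edge contradicting maximality of $G$. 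The remaining subcases (where $u$ is T3-2 or a mingler) are handled the same way, using the respective structural lemmas to pin down the rotation at $v$ and again invoking \cref{lem:uncrossed_edge} or maximality.

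\textbf{The main obstacle} will be the case analysis on the type of the degree-three vertex and the precise bookkeeping of the cyclic order of the five halfedges at $v$: I must verify that in \emph{every} compatible arrangement of the hermit's two-halfedge block and the degree-three vertex's three-halfedge block, some face of $D$ is bounded in a way that forces a forbidden uncrossed chord or an addable edge. A secondary subtlety is ensuring the two low-degree vertices are genuinely distinct claimants and not, for instance, a hermit whose base endpoint is itself the degree-three vertex; I would dispatch that using \cref{lem:efficient_hermit_neighbors}, which guarantees both neighbors of a hermit have degree at least four, so the hermit's two neighbors cannot be the degree-three claimant. Once the rotation at $v$ is exhausted, the impossibility of the shared partition of five halfedges completes the proof.
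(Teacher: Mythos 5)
Your overall strategy coincides with the paper's: reduce to the scenario of a hermit claiming two halfedges and a degree-three vertex claiming three at the same degree-five vertex, note via \cref{lem:efficient_hermit_neighbors} that these two claimants cannot be adjacent, and then derive a contradiction by a case analysis on the type of the degree-three vertex using the rotation at the degree-five vertex. However, your reduction has a genuine gap: you assert that the only dangerous scenario is ``hermit plus degree-three vertex,'' but \emph{two hermits} claiming halfedges at the same degree-five vertex is numerically consistent as well (they would claim only $2+2=4$ of the five halfedges, on four distinct edges, since by \cref{lem:one_hermit_per_edge} they cannot share a base). This case must be excluded separately; the paper does so by invoking \cref{prop:hermit_deg}, which bounds the number of hermits adjacent to a degree-$i$ vertex by $\lfloor i/3\rfloor=1$ for $i=5$. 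Relatedly, your arithmetic ``together consume at most $\lfloor 5/3\rfloor\cdot 3=3$ halfedges'' is not meaningful as written --- the two claimants consume $2+3=5$ halfedges, and the point is only that exclusivity of claims forces these five to partition the rotation at $v$.

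Beyond that, your central contradiction is left as a sketch, and the hardest subcase deserves attention. For a T3-3 claimant the paper's argument is immediate (all five edges at $v$ are uncrossed, so an edge between the hermit and the T3-3 vertex can be added with a single crossing), and for T3-2 one uses \cref{prop:t32-choose} to make the hermit, an uncrossed neighbor $y$, and the T3-2 vertex consecutive around $v$ and then adds an edge crossing only the uncrossed edge $vy$. But for a T3-1 claimant the rotation at $v$ is two uncrossed hermit halfedges followed by a doubly crossed, an uncrossed, and a doubly crossed halfedge; here the paper's primary argument is not an application of \cref{lem:uncrossed_edge} but a redrawing that \emph{swaps the positions} of $v$ and the T3-1 vertex, turning all doubly crossed edges at $v$ into singly crossed ones and contradicting admissibility of $D$ (minimality of the number of doubly crossed edges), with the addable edge $v_1v_2$ offered only as an alternative. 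Your sketch gestures at the right kind of conclusion but does not identify a concrete redrawing or edge insertion for this case, so as written the proof is incomplete there.
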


\section{The Upper Bound: Proof outline of \cref{thm:upper}} \label{sec:upper_bound_sketch}

In this section we describe a construction for a family of maximal
$2$-planar graphs with few edges. We give a complete description of
this family. 
But due to space constraints we give a very rough sketch only for the
challenging part of the proof: to show that these graphs are
\emph{maximal} $2$-planar.  \cref{sec:upper_bound_full} provides a
complete version of this section, with all proofs.

The graphs can roughly be described as braided cylindrical grids. More
precisely, for a given~$k\in\N$ we construct our graph~$G_k$
on~$10k+140$ vertices as follows.
\begin{itemize}
\item Take~$k$ copies of~$C_{10}$, the cycle on~$10$ vertices, and
  denote them by~$D_1,\ldots,D_k$. Denote the vertices of~$D_i$,
  for~$i\in\{1,\ldots,10\}$, by~$v_0^i,\ldots,v_9^i$ so that the edges
  of~$D_i$ are~$\{v_j^iv_{j\oplus 1}^i\colon 0\le j\le 9\}$,
  where~$\oplus$ denotes addition modulo~$10$.
\item For every~$i\in\{1,\ldots,k-1\}$, connect the vertices of~$D_i$
  and~$D_{i+1}$ by a braided matching, as follows. For~$j$ even, add
  the edge~$v_j^iv_{j\oplus 8}^{i+1}$ to~$G_k$ and for~$j$ odd, add
  the edge~$v_j^iv_{j\oplus 2}^{i+1}$ to~$G_k$. See
  \cref{fig:nested_decagons:1}~(left) for illustration.
\item To each edge of~$D_1$ and~$D_k$ we attach a
  gadget~$X\simeq K_9\setminus(K_2+K_2+P_3)$ so as to forbid crossings
  along these edges. Denote the vertices of~$X$ by~$x_0,\ldots,x_8$
  such that~$\deg_X(x_0)=\deg_X(x_1)=8$, $\deg_X(x_8)=6$ and all other
  vertices have degree seven. Let~$x_6,x_7$ be the non-neighbors
  of~$x_8$. To an edge~$e$ of~$D_1$ and~$D_k$ we attach a copy of~$X$ so
  that~$e$ takes the role of the edge~$x_6x_7$ in this copy of~$X$. As
  altogether there are~$20$ edges in~$D_1$ and~$D_k$ and each copy
  of~$X$ adds seven more vertices, a total of~$20\cdot 7=140$ vertices
  are added to~$G_k$ with these gadgets.
\item Finally, we add the edges~$v_j^iv_{j\oplus 2}^i$, for
  all~$0\le j\le 9$ and~$i\in\{1,k\}$.
\end{itemize}
This completes the description of the graph~$G_k$. Note that~$G_k$
has~$10k+140$ vertices and~$10k+10(k-1)+20\cdot 31+2\cdot 10=20k+630$
edges. So to prove \cref{thm:upper} asymptotically it suffices to
choose~$c\ge 630-2\cdot 140=350$ and show that~$G_k$ is maximal
$2$-planar. Using some small local modifications we can then obtain
the statement for all values of~$n$.

To show that~$G_k$ is $2$-planar it suffices to give a $2$-plane
drawing of it. Such a drawing can be deduced from
\cref{fig:nested_decagons:1}: (1)~We nest the cycles~$D_1,\ldots,D_k$
with their connecting edges using the drawing depicted in
\cref{fig:nested_decagons:1}~(left), (2)~draw all copies of~$X$
attached to the edges of~$D_1$ and~$D_k$ using the drawing depicted in
\cref{fig:nested_decagons:1}~(right), and (3)~draw the remaining edges
among the vertices of~$D_1$ and~$D_k$ inside and outside~$D_1$
and~$D_k$, respectively.

\begin{figure}[htbp]
  \hfill
  \begin{minipage}[t]{.48\linewidth}
    \includegraphics[page=1]{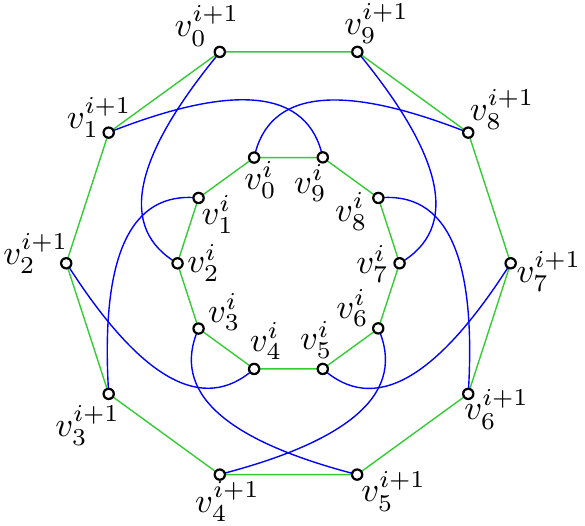}
  \end{minipage}
  \hfill
  \begin{minipage}[t]{.48\linewidth}
    \includegraphics[page=2]{decagons}
  \end{minipage}
  \hfill
  \caption{The braided matching between two consecutive ten-cycles in~$G_k$, shown in blue (left); the gadget graph~$X$ that we attach to the edges of the first and the last ten-cycle of~$G_k$ (right).\label{fig:nested_decagons:1}}
\end{figure}

It is much more challenging, though, to argue that~$G_k$ is
\emph{maximal} $2$-planar. In fact, we do not know of a direct
argument to establish this claim. Instead, we prove that~$G_k$ admits
essentially only one $2$-plane drawing, which is the one described
above. Then maximality follows by just inspecting this drawing and
observing that no edge can be added there because every pair of
non-adjacent vertices is separated by a cycle of doubly-crossed edges.

This leaves us having to prove that~$G_k$ has a unique $2$-plane
drawing.
We solve the problem in a somewhat brute-force way: by enumerating all
$2$-plane drawings of~$G_k$, using computer support.  Still, it is not
immediately clear how to do this, given that (1)~the space of (even
$2$-plane) drawings of a graph can be vast; (2)~$(G_k)$ is an infinite
family; and (3)~already for small~$k$, even a single graph~$G_k$ is
quite large.

First of all, the gadget graph~$X$ is of small constant size. So it
can be analyzed separately, and there is no need to explicitly include
these gadgets into the analysis of~$G_k$. Instead, we account for the
effect of the gadgets 
by considering the edges of~$D_1$ and~$D_k$ as uncrossable. In this
way, we also avoid counting all the variations of placing the attached
copy of~$X$ on either side of the corresponding cycle as different
drawings. In fact, the gadget~$X$ itself has a few formally different
$2$-plane drawings due to its automorphisms. But for our purposes of
arguing about the maximality of~$G_k$, these differences do not
matter. However, these variations are the reason that the $2$-plane
drawing of~$G_k$ is \emph{essentially} unique only.

We also disregard the length two edges along~$D_1$ and~$D_k$. Denote
the resulting subgraph of~$G_i$ by~$G_i^-$. We iteratively enumerate
the $2$-plane drawings of~$G_i^-$, for all~$i\in\N$, where only the
edges of the first cycle~$D_1$ are labeled uncrossable (but not the
edges of the last cycle~$D_i$). All drawings are represented as a
doubly-connected edge list (DCEL)~\cite[Chapter~2]{bkos-cgaa-08}. As a
base case, we use the unique (up to orientation, which we select to be
counterclockwise, without loss of generality) plane drawing
of~$G_1^-=D_1$. For each drawing~$\Gamma$ computed, with a specific
ten-cycle of vertices labeled as~$D$, we consider all possible ways to
extend~$\Gamma$ by adding another ten-cycle of new, labeled vertices
and connect it to~$D$ using a braided matching, as in the construction
of~$G_k$ and depicted in \cref{fig:nested_decagons:1}~(left).

So in each iteration we have a partial drawing~$\Gamma$ and a
collection~$H$ of vertices and edges still to be drawn. We then
exhaustively explore the space of simple $2$-plane drawings
of~$\Gamma\cup H$. Our approach is similar to the one used by
Angelini, Bekos, Kaufmann, and Schneck~\cite{AngeliniBKS20} for
complete and complete bipartite graphs. We consider the edges to be
drawn in some order such that whenever an edge is considered, at least
one of its endpoints is in the drawing already. When drawing an edge,
we go over (1)~all possible positions in the rotation at the source
vertex and for each such position all options to (2)~draw the edge
with zero, one or two crossings. Each option to consider amounts to a
traversal of some face incident to the source vertex, and up to two
more faces in the neighborhood.  At every step we ensure that the
drawing constructed remains $2$-plane and simple, and backtrack
whenever an edge cannot be added or the drawing is complete (that is,
it is a $2$-plane drawing of~$G_i^-$, for some~$i\in\N$).

Every drawing for~$\Gamma\cup H$ obtained in this fashion is then
tested, as described below. If the tests are successful, then the
drawing is added to the list of drawings to be processed, as a child
of~$\Gamma$, and such that the ten-cycle in~$H$ takes the role of~$D$
for future processing.

As for the testing a drawing~$\Gamma$, we are only interested in a
drawing that can---eventually, after possibly many iterations---be
extended in the same way with an uncrossable ten-cycle~$D_k$. In
particular, all vertices and edges of~$D_k$ must lie in the same face
of~$\Gamma$. Hence, we test whether there exists a suitable
\emph{potential final} face in~$\Gamma$ where~$D_k$ can be placed (see
\cref{sec:upper_bound_full} for details); if not, then we
discard~$\Gamma$ from further consideration. We also go over the faces
of~$\Gamma$ and remove \emph{irrelevant} faces and vertices that are
too far from any potential final face to ever be able to interact with
vertices and edges to be added in future iterations. Finally, we check
whether the resulting \emph{reduced} drawing has already been
discovered by comparing it to all the already discovered drawings (by
testing for an isomorphism that preserves the cycle~$D$). If not, then
we add it to the list of valid drawings.

For each drawing for which we found at least one child drawing, we
also test whether there exists a similar extension where the cycle
in~$H$ is uncrossable. Whenever such an extension is possible, we
found a $2$-plane drawing of~$G_i$, for some~$i\in\N$. The algorithm
for~$G_i^-$ runs for about~$1.5$ days and discovers~$86$ simple
$2$-plane drawings of~$G_i^-$. In only one of these drawings the last
ten-cycle is uncrossed: the drawing described above (see
\cref{fig:nested_decagons:1}). The algorithm for the gadget~$X$ runs
for about $3$min.~and discovers $32$ simple $2$-plane drawings, as
expected. The full source code is available in our
repository~\cite{our-repo}.

\section{Conclusions}\label{sec:conclusions}

We have obtained tight bounds on the number of edges in maximal
$2$-planar graphs, up to an additive constant. Naturally, one would
expect that our approach can also be applied to other families of
near-planar graphs, specifically, to 
maximal~$1$- and $3$-planar graphs. Intuitively, 
for $k$-planar graphs the challenge with increasing~$k$ is that the
structure of the drawings gets more involved, whereas with
decreasing~$k$ we aim for a higher bound.

\bibliographystyle{splncs04}
\bibliography{arxiv.bib}

\begin{thebibliography}{10}
\providecommand{\url}[1]{\texttt{#1}}
\providecommand{\urlprefix}{URL }
\providecommand{\doi}[1]{https://doi.org/#1}

\bibitem{ackerman2019topological}
Ackerman, E.: On topological graphs with at most four crossings per edge.
  Computational Geometry  \textbf{85},  101574 (2019).
  \doi{10.1016/j.comgeo.2019.101574}

\bibitem{AngeliniBKS20}
Angelini, P., Bekos, M.A., Kaufmann, M., Schneck, T.: Efficient generation of
  different topological representations of graphs beyond-planarity. J. Graph
  Algorithms Appl.  \textbf{24}(4),  573--601 (2020). \doi{10.7155/jgaa.00531},
  \url{https://doi.org/10.7155/jgaa.00531}

\bibitem{brandenburg_2planar}
Auer, C., Brandenburg, F., Glei{\ss}ner, A., Hanauer, K.: On sparse maximal
  2-planar graphs. In: Proc. 20th Int. Sympos. Graph Drawing ({GD} 2012).
  Lecture Notes Comput. Sci., vol.~7704, pp. 555--556. Springer (2012).
  \doi{10.1007/978-3-642-36763-2\_50},
  \url{https://doi.org/10.1007/978-3-642-36763-2\_50}

\bibitem{BaratT18}
Bar{\'{a}}t, J., T{\'{o}}th, G.: Improvements on the density of maximal
  1-planar graphs. J. Graph Theory  \textbf{88}(1),  101--109 (2018).
  \doi{10.1002/jgt.22187}

\bibitem{bt-s2dfe-21}
Bar{\'{a}}t, J., T{\'{o}}th, G.: Saturated $2$-planar drawings with few edges.
  CoRR  \textbf{abs/2110.12781} (2021), \url{https://arxiv.org/abs/2110.12781}

\bibitem{bekos_optimal2planargraphs}
Bekos, M.A., Kaufmann, M., Raftopoulou, C.N.: On optimal 2- and 3-planar
  graphs. In: Proc. 33rd Internat. Sympos. Comput. Geom. ({SoCG 2017}). LIPIcs,
  vol.~77, pp. 16:1--16:16. Schloss Dagstuhl - Leibniz-Zentrum f{\"{u}}r
  Informatik (2017). \doi{10.4230/LIPIcs.SoCG.2017.16},
  \url{https://doi.org/10.4230/LIPIcs.SoCG.2017.16}

\bibitem{bkos-cgaa-08}
de~Berg, M., van Kreveld, M., Overmars, M., Schwarzkopf, O.: Computational
  Geometry: Algorithms and Applications. Springer, Berlin, Germany, 3rd edn.
  (2008). \doi{10.1007/978-3-540-77974-2}

\bibitem{bsw-bsgr-83}
Bodendiek, R., Schumacher, H., Wagner, K.: Bemerkungen zu einem
  {S}echsfarbenproblem von {G.}~{R}ingel. Abhandlungen aus dem Mathemati\-schen
  Seminar der Universit\"at Hamburg  \textbf{53},  41--52 (1983).
  \doi{10.1007/BF02941309}

\bibitem{brandenburg_1planar}
Brandenburg, F., Eppstein, D., Glei{\ss}ner, A., Goodrich, M.T., Hanauer, K.,
  Reislhuber, J.: On the density of maximal 1-planar graphs. In: Proc. 20th
  Int. Sympos. Graph Drawing ({GD} 2012). Lecture Notes Comput. Sci.,
  vol.~7704, pp. 327--338. Springer (2012).
  \doi{10.1007/978-3-642-36763-2\_29},
  \url{https://doi.org/10.1007/978-3-642-36763-2\_29}

\bibitem{ckpru-eskd-21}
Chaplick, S., Klute, F., Parada, I., Rollin, J., Ueckerdt, T.: Edge-minimum
  saturated k-planar drawings. In: Proc. 29th Int. Sympos. Graph Drawing ({GD}
  2021). Lecture Notes Comput. Sci., vol. 12868, pp. 3--17. Springer (2021).
  \doi{10.1007/978-3-030-92931-2\_1},
  \url{https://doi.org/10.1007/978-3-030-92931-2\_1}

\bibitem{dlm-sgdbp-20}
Didimo, W., Liotta, G., Montecchiani, F.: A survey on graph drawing beyond
  planarity. {ACM} Comput. Surv.  \textbf{52}(1),  1--37 (2020).
  \doi{10.1145/3301281}

\bibitem{hirs-sstgfe-18}
Hajnal, P., Igamberdiev, A., Rote, G., Schulz, A.: Saturated simple and
  2-simple topological graphs with few edges. J. Graph Algorithms Appl.
  \textbf{22}(1),  117--138 (2018). \doi{10.7155/jgaa.00460},
  \url{https://doi.org/10.7155/jgaa.00460}

\bibitem{our-repo}
Hoffmann, M., Reddy, M.M.: The number of edges in maximal 2-planar
  graphs---code repository.
  \url{https://github.com/meghanamreddy/Sparse-maximal-2-planar-graphs}

\bibitem{ht-bpg-20}
Hong, S.H., Tokuyama, T. (eds.): Beyond Planar Graphs. Springer, Singapore
  (2020). \doi{10.1007/978-981-15-6533-5}

\bibitem{hms-pm1pg-12}
Hud{\'a}k, D., Madaras, T., Suzuki, Y.: On properties of maximal 1-planar
  graphs. Discussiones Mathematicae  \textbf{32},  737--747 (2012).
  \doi{10.7151/dmgt.1639}

\bibitem{k-mn1pg-08}
Korzhik, V.P.: Minimal non-1-planar graphs. Discrete Math.  \textbf{308}(7),
  1319--1327 (2008). \doi{10.1016/j.disc.2007.04.009}

\bibitem{KYNCL2015295}
Kyn\v{c}l, J., Pach, J., Radoi\v{s}i\'{c}, R., T\'{o}th, G.: Saturated simple
  and k-simple topological graphs. Computational Geometry  \textbf{48}(4),  295
  -- 310 (2015). \doi{10.1016/j.comgeo.2014.10.008}

\bibitem{PachRTT06}
Pach, J., Radoi{\v{c}}i{\'{c}}, R., Tardos, G., T{\'{o}}th, G.: Improving the
  crossing lemma by finding more crossings in sparse graphs. Discrete Comput.
  Geom.  \textbf{36}(4),  527--552 (2006). \doi{10.1007/s00454-006-1264-9}

\bibitem{PachT97}
Pach, J., T{\'{o}}th, G.: Graphs drawn with few crossings per edge.
  Combinatorica  \textbf{17}(3),  427--439 (1997). \doi{10.1007/BF01215922},
  \url{https://doi.org/10.1007/BF01215922}

\bibitem{Ringel65}
Ringel, G.: Ein {S}echsfarbenproblem auf der {K}ugel. Abhandlungen aus dem
  Mathematischen Seminar der Universit\"at Hamburg  \textbf{29},  107--117
  (1965). \doi{10.1007/BF02996313}

\bibitem{schaefer2012graph}
Schaefer, M.: The graph crossing number and its variants: {A} survey. The
  Electronic Journal of Combinatorics  \textbf{20} (2013). \doi{10.37236/2713},
  version~7 (April~8, 2022)

\bibitem{s-o1pgtos-10}
Suzuki, Y.: Optimal 1-planar graphs which triangulate other surfaces. Discr.
  Math.  \textbf{310}(1),  6--11 (2010). \doi{10.1016/j.disc.2009.07.016}

\end{thebibliography}

\appendix

\section{Basic properties of maximal $k$-planar graphs}\label{sec:basic}

\begin{lemma}\label{lem:connected}
  Every maximal $k$-planar graph is connected.
\end{lemma}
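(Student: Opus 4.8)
The plan is to argue by contradiction. Suppose some maximal $k$-planar graph~$G$ is disconnected, and let~$C_1,\ldots,C_t$, with~$t\ge 2$, denote its connected components. I will construct a $k$-plane drawing of a graph obtained from~$G$ by adding a single edge between two distinct components, contradicting the maximality of~$G$.

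First I would fix a $k$-plane drawing~$D$ of~$G$, which exists because~$G$ is $k$-planar. Restricting~$D$ to each component~$C_i$ yields a $k$-plane drawing~$D_i$ of~$C_i$ (every edge still has at most~$k$ crossings, since deleting edges cannot create crossings). The key geometric step is to rearrange these pieces: I would place each~$D_i$ inside its own open disk~$\Delta_i$ so that the disks~$\Delta_1,\ldots,\Delta_t$ are pairwise disjoint. Since rigid motions and scalings preserve the number of crossings on every edge, each relocated copy is still $k$-plane, and the union of all pieces is a $k$-plane drawing~$D'$ of~$G$ in which distinct components occupy disjoint regions of the plane.

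Next I would select a vertex to connect. Because each~$D_i$ is a drawing of a (nonempty) connected graph, its drawing meets the boundary of the unbounded face of~$D'$ restricted to~$\Delta_i$; in particular there is a vertex~$u\in C_1$ and a vertex~$v\in C_2$ that are each accessible from the free region $\R^2\setminus(\Delta_1\cup\cdots\cup\Delta_t)$. Since~$\Delta_1$ and~$\Delta_2$ are disjoint and the free region is connected, I can route a simple curve from~$u$ to~$v$ through this free region that avoids all disks~$\Delta_i$ except near its two endpoints, and hence crosses no edge of~$D'$. Adding this uncrossed curve realizes the edge~$uv$. As~$u$ and~$v$ lie in different components of~$G$, the pair~$uv$ is not an edge of~$G$, so the result is a $k$-plane drawing of~$G+uv$. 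This contradicts the assumption that~$G$ is maximal $k$-planar, and therefore~$G$ must be connected.

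The only genuinely delicate point is the topological step of routing the connecting curve without crossings, which rests on two facts: that a drawing of a connected graph has a vertex on the boundary of its outer face, and that the complement of finitely many pairwise disjoint disks is arcwise connected. Both are elementary, so once the components are separated into disjoint disks the construction goes through directly; I expect no further obstacle.
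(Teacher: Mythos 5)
Your overall strategy is the same as the paper's: separate the components into disjoint regions of the plane and add an uncrossed edge between two of them, contradicting maximality. However, there is a genuine gap in the step you yourself flag as delicate. You assert that ``a drawing of a connected graph has a vertex on the boundary of its outer face'' and use this to find vertices $u\in C_1$, $v\in C_2$ that are accessible from the free region outside the disks. This claim is true for crossing-free drawings, but it is false for drawings with crossings: the boundary of the unbounded face may consist entirely of arcs between crossing points. For instance, draw a hexagon $v_1\ldots v_6$ as a convex cycle and route the three long diagonals $v_1v_4$, $v_2v_5$, $v_3v_6$ as large arcs \emph{outside} the hexagon; their three pairwise crossings then form the outer boundary, and no vertex lies on the outer face. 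Since you obtain each $D_i$ merely by restricting the given drawing $D$, you have no control over which (if any) vertices are reachable from outside $\Delta_i$, so the connecting curve you describe cannot in general be routed without crossing edges of $D'$.

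The fix is standard and is exactly what the paper does: pick a vertex $b$ of one component, pick a face $f_B$ of that component's drawing incident to $b$, and apply a homeomorphism of the sphere so that $f_B$ becomes the outer face; then place this adjusted drawing inside a face of the other component's drawing that is incident to the chosen vertex $a$. After this adjustment both $a$ and $b$ lie on the boundary of a common face and the uncrossed edge $ab$ can be added. With that modification your argument goes through; without it, the routing step does not.
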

\begin{proof}\label{PConnected}
  Suppose for a contradiction that a maximal $k$-planar graph~$G$ has two components~$A,B$. Consider (separate)~$k$-plane drawings~$\gamma_A$ of~$A$ and~$\gamma_B$ of~$B$, and pick a pair of vertices~$a\in A$ and~$b\in B$. Let~$f_A$ be a face of~$\gamma_A$ incident to~$a$, and let~$f_B$ be a face of~$\gamma_B$ incident to~$b$. If necessary adjust the drawing~$\gamma_B$ so that~$f_B$ is its outer face, and then scale~$\gamma_B$ appropriately to place it inside~$f_A$. To the resulting drawing of~$G[A\cup B]$ we may add an uncrossed edge~$ab$, in contradiction to the maximality of~$G$.
\end{proof}

\UncrossedEdge*
\begin{proof}\label{PUncrossedEdge}
  An uncrossed edge~$uv$ can be added to~$D$ inside the common face. So if~$uv$ is not an edge of~$G$, this is a contradiction to the maximality of~$G$. And if~$uv$ is crossed in~$D$, we replace the original drawing with the uncrossed one. The resulting drawing~$D'$ has strictly fewer crossings than~$D$ overall, and also locally every edge has at most as many crossings in~$D'$ as it has in~$D$. In particular, the drawing~$D'$ is~$k$-plane, which is a contradiction to the crossing-minimality of~$D$.
\end{proof}

\VertexIncidentToUncrossedEdge*
\begin{proof}\label{PVertexIncidentToUncrossedEdge}
  Let $u$ be a vertex of~$D$. By \cref{lem:connected} there is at least one edge~$e=uv$ incident to~$u$ in~$D$. If~$e$ is uncrossed in~$D$, then the statement of the lemma holds.

  Hence assume that~$e$ is crossed in~$D$. Let $f=xy$ be the first edge that crosses~$e$ when traversing it starting from $u$, and denote this crossing by~$\alpha$; see \cref{fig:2connected_fig1} for illustration. By \cref{thm:2planar_simple} we know that~$D$ is simple and, therefore, the vertices~$x,y,u,v$ are pairwise distinct.
  Edge $f$ has at most two crossings, and one of its crossings is with~$e$. 
  Without loss of generality assume that the other crossing on~$f$, if there is any, lies between~$x$ and~$\alpha$. 
  Then, the edge~$f$ is uncrossed between~$\alpha$ and~$y$. 
  In addition, by definition of~$f$ the edge~$e$ is uncrossed between~$u$ and~$\alpha$. 
  Thus, the vertices~$u$ and~$y$ lie on a common face in~$D$. Then by \cref{lem:uncrossed_edge} there is an uncrossed edge~$uy$ in~$D$, as claimed.
\end{proof}

\begin{figure}[htbp]
    \centering
    \includegraphics[scale=1]
    {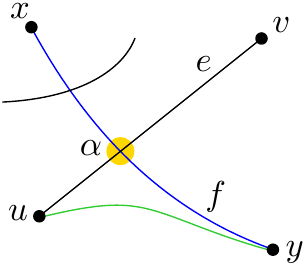}
    \caption{Proof of \cref{lem:vertex_incident_to_uncrossededge}: Vertices $u$ and $y$ lie on a common face.}
  \label{fig:2connected_fig1}
\end{figure}

\TwoConnected*
\begin{proof}\label{PTwoConnected}
  Suppose for a contradiction that~$G=(V,E)$ is a maximal $k$-planar graph that has a cut vertex~$v$. 
  Removing $v$ from $G$ divides the graph into two or more connected components. Denote one of these components by~$A$, and let~$B=V\setminus A$.
  Then both~$H_0=G[A\cup\{v\}]$ and~$H_1=G[B]$ are~$k$-planar (as subgraphs of the~$k$-planar graph~$G$).
  Let~$D_i$ be a crossing-minimal~$k$-plane drawing of~$H_i$, for~$i\in\{0,1\}$. Then by \cref{lem:vertex_incident_to_uncrossededge} there is an uncrossed edge~$vu_i$ in~$D_i$, for all~$i\in\{0,1\}$. As~$H_0$ and~$H_1$ are edge-disjoint, we have~$u_0\ne u_1$.

  We create a~$k$-plane drawing of~$G$ as follows. Place~$v$ arbitrarily and assign to each~$H_i$, for~$i\in\{0,1\}$, a private open halfplane incident to~$v$. If necessary adjust~$D_i$ so that~$vu_i$ lies on its outer face, and continuously deform~$D_i$ so that~$v$ is an extreme point (that is, so that there is a line~$\ell$ through~$v$ such that~$D_i\setminus v$ lies strictly on one side of~$\ell$). Then place a copy of~$D_i$ inside its assigned halfplane so that it attaches to~$v$, for all~$i\in\{0,1\}$. The result is a~$2$-plane drawing~$D'$ of~$G$, where both~$vu_0$ and~$vu_1$ are edges of the outer face. Thus, we can add an uncrossed edge~$u_0u_1$ to~$D'$, which is a contradiction to the maximality of~$G$.
\end{proof}


\begin{lemma}\label{lem:crneighbor}
  Let~$uv$ and~$e$ be two independent edges in a $2$-plane drawing~$D$
  of a maximal $2$-planar graph~$G$ that cross at a point~$\chi$ so
  that the arc~$u\chi$ of~$uv$ is uncrossed in~$D$. Then for at least
  one endpoint~$p$ of~$e$ the arc~$p\chi$ is uncrossed in~$D$ and~$p$
  is adjacent to~$u$ in~$G$ via an uncrossed edge in~$D$.
\end{lemma}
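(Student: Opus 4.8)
The plan is to reduce the claim to \cref{lem:uncrossed_edge} by exhibiting a single face of~$D$ whose boundary contains both~$u$ and a suitable endpoint~$p$ of~$e$. Write~$e=xy$. For the first half of the conclusion, I use that~$D$ is~$2$-plane: the edge~$e$ carries at most two crossings, one of which is~$\chi$, so at most one further crossing lies on~$e$, and that crossing lies on exactly one of the two arcs into which~$\chi$ splits~$e$. Consequently at least one of the arcs~$x\chi$ and~$y\chi$ is uncrossed, and I fix~$p\in\{x,y\}$ to be an endpoint for which~$p\chi$ is uncrossed. This already settles the part ``the arc~$p\chi$ is uncrossed in~$D$''.

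It remains to show that~$u$ and~$p$ lie on a common face of~$D$, and here I would argue locally at~$\chi$. Since crossings are proper and transversal (by the definition of a drawing), the four arc-ends meeting at~$\chi$ alternate between the two curves~$uv$ and~$e$; thus the two arc-ends of~$uv$ (towards~$u$ and~$v$) separate the two arc-ends of~$e$ (towards~$p$ and towards the other endpoint). In particular the arc-end towards~$u$ and the arc-end towards~$p$ are cyclically consecutive and hence bound a common local sector~$S$ at~$\chi$. Let~$f$ be the face of~$D$ that contains~$S$. Traversing the arc~$u\chi$ from~$\chi$ towards~$u$, this arc is uncrossed by hypothesis, so it meets the rest of~$D$ only at its endpoints, and therefore the side of it facing~$S$ stays in~$f$ until we reach the vertex~$u$; hence~$u\in\partial f$. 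Traversing the (also uncrossed) arc~$p\chi$ towards~$p$ in the same way gives~$p\in\partial f$. Therefore~$u$ and~$p$ lie on the common face~$f$.

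Finally, I would invoke \cref{lem:uncrossed_edge}: as~$u$ and~$p$ lie on a common face of~$D$, the pair~$up$ is an edge of~$G$ and it is uncrossed in~$D$, which is exactly the remaining assertion. I expect the middle step to be the main obstacle, namely verifying that following the two uncrossed arcs out of~$\chi$ never leaves the face~$f$; this rests on the transversality of the crossing at~$\chi$ (so that the~$u$- and~$p$-arc-ends are genuinely consecutive and bound one sector) and on the absence of crossings along~$u\chi$ and~$p\chi$ (so that the incident face cannot change before the endpoints are reached). Everything else is bookkeeping. Note that it is precisely the appeal to \cref{lem:uncrossed_edge} where crossing-minimality of~$D$ enters: this is what upgrades ``$u$ and~$p$ share a face'' to ``$up$ is an \emph{uncrossed} edge'', ruling out that~$up$ is present but realized with crossings elsewhere in the drawing.
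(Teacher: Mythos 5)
Your proof is correct and follows essentially the same route as the paper's: pick the endpoint~$p$ of~$e$ whose arc to~$\chi$ is uncrossed (which exists since~$e$ has at most two crossings), observe that~$u$ and~$p$ then lie on a common face, and apply \cref{lem:uncrossed_edge}. The only difference is that you spell out the local sector argument at~$\chi$ that the paper leaves implicit, and your remark that crossing-minimality enters via \cref{lem:uncrossed_edge} is accurate --- the lemma's statement omits that hypothesis, but the paper's proof relies on it exactly as yours does.
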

\begin{proof}
  As~$e$ has at most two crossings in~$D$, at least one of the two
  arcs of~$e\setminus\chi$ is uncrossed. Let~$p$ be an endpoint of~$e$
  such that~$p\chi$ is uncrossed in~$D$. Then~$p$ and~$u$ appear on a
  common face in~$D$ and, therefore, they are adjacent via an
  uncrossed edge in~$D$ by \cref{lem:uncrossed_edge}.
\end{proof}

\begin{lemma}\label{lem:credge}
  Let~$uv$ be a doubly crossed edge in a crossing-minimal $2$-plane
  drawing~$D$ of a $2$-planar graph. Denote the crossings of~$uv$
  by~$\alpha$ and~$\beta$ such that~$\alpha$ appears before~$\beta$
  when traversing~$uv$ starting from~$u$. Then~$u$ and~$\beta$ do not
  appear on a common face in~$D$.
\end{lemma}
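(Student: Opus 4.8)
The plan is to reach a contradiction with the crossing-minimality of~$D$ by rerouting the doubly crossed edge~$uv$ into a curve with at most one crossing. First I would record the two structural facts I need. Since~$D$ is crossing-minimal it is simple by \cref{thm:2planar_simple}; hence the edge~$e$ crossing~$uv$ at~$\beta$ shares no endpoint with~$uv$, and because~$\alpha,\beta$ are the only crossings of~$uv$ with~$\alpha$ before~$\beta$ (seen from~$u$), the arc of~$uv$ from~$\beta$ to~$v$ is uncrossed in~$D$.

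Assume, for contradiction, that~$u$ and~$\beta$ lie on a common face~$f$. I would first locate a quadrant~$Q$ at the crossing~$\beta$ that is contained in~$f$: near~$\beta$ the two curves~$uv$ and~$e$ split a small disk into four sectors, each lying in some face, and since~$\beta\in\partial f$ at least one such sector lies in~$f$. Locally this quadrant~$Q$ is bounded by one sub-arc of~$uv$ and one sub-arc of~$e$. Since~$u\in\partial f$ as well, I can route a simple, crossing-free curve~$c$ from~$u$ through the open face~$f$ up to a small neighborhood of~$\beta$, arriving through~$Q$. The new drawing of~$uv$ then consists of~$c$ followed by a merge into the uncrossed arc~$\beta v$, continued along it to~$v$.

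The heart of the argument is the local count near~$\beta$. If~$Q$ is bounded by the~$uv$-arc pointing toward~$v$, then~$c$ attaches directly to the arc~$\beta v$ inside~$f$ and the rerouted edge is uncrossed. Otherwise~$Q$ is bounded by the~$uv$-arc pointing toward~$u$; then to reach~$\beta v$ the rerouted edge crosses the~$e$-sub-arc bounding~$Q$ exactly once, landing in the adjacent quadrant incident to~$\beta v$, and then merges into~$\beta v$. Either way the rerouted edge is simple and crosses only~$e$, at most once. Replacing the old drawing of~$uv$ by this curve gives a drawing~$D'$ in which~$uv$ has at most one crossing, the edge crossing~$uv$ at~$\alpha$ loses its crossing with~$uv$, the edge~$e$ has at most as many crossings as before (it loses the crossing at~$\beta$ but gains at most one nearby), and all other edges are untouched. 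Hence~$D'$ is~$2$-plane but has strictly fewer crossings than~$D$ (two crossings removed, at most one added), contradicting the crossing-minimality of~$D$; so~$u$ and~$\beta$ cannot lie on a common face.

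I expect the main obstacle to be making the local rerouting at~$\beta$ rigorous: one must argue that whichever of the four quadrants at~$\beta$ the face~$f$ occupies, the new curve can be made simple and confined to cross~$e$ at most once and no other edge, so that the net crossing count strictly drops. The rest is routine bookkeeping on how many crossings each affected edge has in~$D'$. Note that this argument uses only crossing-minimality (via simplicity), not maximality, consistent with the hypotheses of the statement.
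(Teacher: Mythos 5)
Your proof is correct and takes essentially the same approach as the paper's: reroute~$uv$ from~$u$ through the common face to~$\beta$ and continue along the uncrossed arc~$\beta v$, obtaining a $2$-plane drawing with strictly fewer crossings, contradicting crossing-minimality. Your quadrant analysis at~$\beta$ is in fact slightly more careful than the paper's one-line assertion that no new crossing is introduced, since it accounts for the case where the rerouted edge must cross~$e$ once near~$\beta$.
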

\begin{proof}
  Suppose for a contradiction that~$u$ and~$\beta$ appear on a common
  face~$f$ of~$D$. Then we can redraw the edge~$uv$ starting from~$u$
  to reach~$\beta$ through the interior of~$f$ and continue from there
  along its original drawing. This eliminates at least one crossing
  (of~$uv$ at~$\alpha$) and does not introduce any crossing. Thus, the
  resulting drawing is a $2$-plane drawing of the same graph with
  strictly fewer crossings than~$D$, in contradiction to the
  crossing-minimality of~$D$.
\end{proof}

\section{Proofs from \cref{sec:hermit}: Hermits}

\HermitUncrossed*
\begin{proof}\label{PHermitUncrossed}
  For a contradiction, assume that the edge~$hy$ is crossed
  in~$D$. Let~$\chi$ be the first crossing along~$hy$ while traversing
  it starting from~$h$, and let~$f$ be the crossing edge. By
  \cref{lem:crneighbor} at least one endpoint of~$f$ is adjacent
  to~$h$ in~$G$, denote this endpoint by~$u$. As~$D$ is simple, we
  have~$u\ne y$, which implies~$u=x$.

  We claim that the edge~$hx$ is uncrossed. Otherwise, it can be
  redrawn by following~$f$ from~$x$ to~$\chi$ and then following~$hy$
  from~$\chi$ to~$h$; see \cref{fig:degree2_fig1} for
  illustration. Such a redrawing would decrease the number of
  crossings, in contradiction to the crossing-minimality of~$D$. Thus,
  the edge~$hx$ is uncrossed.  But now the edge~$f$ can be redrawn to
  follow~$hx$ from~$x$ to~$h$, then follow~$hy$ from~$h$ to~$\chi$,
  and then follow its original drawing. This redrawing eliminates the
  crossing between~$f$ and~$hy$ at~$\chi$ and does not introduces any
  crossing, which is a contradiction to the crossing-minimality
  of~$D$. Therefore, both edges incident to~$h$ are uncrossed
  in~$D$. The statement concerning the edge~$xy$ then follows by
  \cref{lem:uncrossed_edge}.
\end{proof}

\begin{figure}[htbp]
  \centering
  \includegraphics[scale=1]{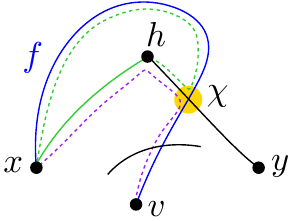}
  \caption{Edges incident to a hermit $h$ are uncrossed.}
  \label{fig:degree2_fig1}
\end{figure}

\OneHermitPerEdge*
\begin{proof}\label{POneHermitPerEdge}
  For a contradiction, assume that an edge~$xy$ of~$G$ hosts two
  hermits~$h,h'$. Then by \cref{lem:hermit_uncrossed} all four
  edges~$xh, yh, xh',yh'$ are uncrossed in~$D$. Then in~$D$ the
  vertex~$h'$ can be placed arbitrarily close to~$h$, and the
  edges~$xh'$ and~$yh'$ can be (re)drawn close to the edges~$xh$
  and~$yh$, respectively, so that an uncrossed edge can be drawn
  between~$h$ and~$h'$. But this is a contradiction to the maximality
  of~$G$. Thus, no such edge~$xy$ exists in~$G$.
\end{proof}

\EfficientHermitNeighbors*
\begin{proof}\label{PEfficientHermitNeighbors}
  By~\cref{lem:hermit_uncrossed}, the edges $hx, hy$ and $xy$ are
  uncrossed in~$D$; after redrawing---if necessary---the edge~$xy$ to
  closely follow the uncrossed path~$xhy$, we may assume that there is
  a triangular face~$\phi$ bounded by the cycle~$xyh$
  in~$D$. Let~$\psi\ne\phi$ denote the other face
  with~$h\in\partial\psi$; see \cref{fig:degree2_faces}~(left) for
  illustration. Since~$G$ is maximal $2$-planar and~$\deg(h)=2$, no
  other (than~$x,y,h$) vertex of~$G$ is on~$\partial\psi$: If there
  was such a vertex, then it would be neighbor of~$h$ by
  \cref{lem:uncrossed_edge}. As~$G$ has at least five vertices and it
  is $2$-connected by \cref{thm:2connected}, both~$x$ and~$y$ have at
  least one neighbor outside of~$x,y,h$. Assume for a contradiction
  that~$\deg(x)=3$, and let~$e=xa$ denote the edge between~$x$
  and~$a\notin\{y,h\}$.

  As~$a\notin\partial\psi$, the edge~$e$ is crossed in~$D$. Let~$f$ be
  the first edge that crosses~$e$ when traversing it starting
  from~$x$, and denote this crossing by~$\chi$. By
  \cref{lem:crneighbor} at least one endpoint~$q$ of~$f$ is adjacent
  to~$x$ in~$G$, that is, we have~$q\in\{a,y,h\}$. As~$D$ is simple
  and~$a$ is an endpoint of~$e$, which crosses~$f$, we have~$q\ne a$.
  As all edges at~$h$ are uncrossed in~$D$, whereas~$f$ is crossed, it
  follows that~$q\ne h$. Therefore, we have~$q=y$. As the part of~$f$
  between~$y$ and~$\chi$ is uncrossed, we can change the drawing as
  follows: Place~$x,h$ close to~$y$ on the side of~$f$ which the part
  of~$e$ between~$\chi$ and~$a$ attaches to. The triangle~$xyh$ can be
  drawn uncrossed, and the edge~$e$ can be drawn to follow~$f$ up
  to~$\chi$ and then continue along its original drawing. In this way,
  we eliminate the crossing at~$\chi$ without introducing any
  crossing, a contradiction to the crossing-minimality of~$D$. See
  \cref{fig:degree2_faces}~(right) for illustration.
  
  It follows that~$\deg(x)\ge 4$ and, by an analogous reasoning,
  also~$\deg(y)\ge 4$.
\end{proof}

\begin{figure}[htbp]
  \hfill\includegraphics[scale=1,page=1]{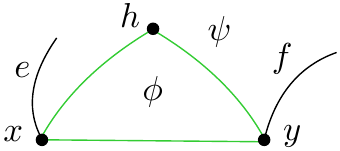}\hfill
  \includegraphics[scale=1,page=3]{degree2_faces}\hfill\hfill
  \caption{
    A hermit~$h$ and its neighborhood (left); redraw to obtain a
    contradiction (right).}
  \label{fig:degree2_faces}
\end{figure}

\HermitDeg*
\begin{proof}\label{PHermitDeg}
  By \cref{lem:efficient_hermit_neighbors} all neighbors of hermits
  have degree at least four. Hence, the statement holds for~$i\le 3$.
  Let~$v$ be a vertex that is adjacent to a hermit, and
  let~$H_v\ne\emptyset$ denote the set of hermits adjacent to~$v$
  in~$G$. Consider the circular order~$C_v$ of edges incident to~$v$
  in~$D$.

  We may assume that for every hermit~$h\in H_v$ the edge~$vh$ is
  adjacent in~$C_v$ to the edge that hosts~$h$: As long as this is not
  the case for some hermit~$h\in H_v$, we redraw the host edge
  sufficiently close to the path in~$D$ that is formed by the two
  edges incident to~$h$, which are uncrossed in~$D$ by
  \cref{lem:hermit_uncrossed}. As every edge hosts at most one hermit
  by \cref{lem:one_hermit_per_edge}, every edge is redrawn at most
  once during this process.

  Hence, each hermit~$h\in H_v$ corresponds to a pair of uncrossed
  edges~$vh,vx$ that are adjacent in~$C_v$. Note that we can adjust
  the order of~$vh,vx$ to our liking, by redrawing the uncrossed
  path~$vhx$ on either side of the edge~$vx$. We claim that both
  neighbors of~$vh,vx$ in~$C_v$ are crossed in~$D$. Suppose for a
  contradiction that, without loss of generality the edges~$vx,vh,vy$
  are consecutive in~$C_v$ and all uncrossed in~$D$. Then~$h$ and~$y$
  appear on a common face in~$D$ and, therefore, they are adjacent
  in~$G$ by \cref{lem:uncrossed_edge}. As~$x,y,h$ are pairwise
  distinct neighbors of~$v$ in~$G$, we have~$\deg(h)\ge 3$, in
  contradiction to our assumption that~$h$ is a hermit. Hence, the
  claim holds and, in particular, any two pairs of edges corresponding
  to hermits in~$C_v$ are separated by at least one crossed edge. So
  overall we have at least three edges incident to~$v$ for each hermit
  in~$H_v$, which proves the lemma.
\end{proof}

\DegreeFourHermit*
\begin{proof}\label{PDegreeFourHermit}
  By \cref{lem:hermit_uncrossed} we have an uncrossed triangle~$uvh$
  in~$D$. By redrawing---if necessary---the edge~$uv$ close to the
  path~$uhv$ we may assume that~$uvh$ bounds a face of~$D$. We can
  place~$h$ on either side of~$uv$ without changing anything else in
  the drawing but the orientation of the triangular face~$uvh$.

  We claim that both~$ux$ and~$uw$ are crossed in~$D$. Suppose for a
  contradiction that, for instance, the edge~$ux$ is uncrossed
  in~$D$. Then we place~$h$ so that it appears in between~$x$ and~$v$
  in the circular order of neighbors around~$u$. Then~$x$ and~$h$
  appear on a common face and by \cref{lem:uncrossed_edge} they are
  adjacent in~$G$, in contradiction to~$h$ being a hermit. This proves
  our claim that both~$ux$ and~$uw$ are crossed in~$D$.

  Let~$ab$ be the first edge that crosses~$ux$ when traversing~$ux$
  starting from~$u$, and denote this crossing by~$\alpha$. We
  have~$h\notin\{a,b\}$ because both edges incident to~$h$ are
  uncrossed in~$D$ whereas the edge~$ab$ is crossed. By
  \cref{lem:crneighbor} at least one endpoint of~$ab$ is adjacent
  to~$u$ via an uncrossed edge in~$D$; let~$a$ be such an endpoint.
  As the only neighbors of~$u$ via an uncrossed edge in~$D$ are the
  vertices~$v$ and~$h$ and we observed~$h\notin\{a,b\}$, it follows
  that~$a=v$.

  Next we claim that~$vb$ is doubly crossed in~$D$. Suppose for a
  contradiction that~$vb$ is singly crossed only. Then~$u$ and~$b$
  appear on a common face in~$D$, and so by \cref{lem:uncrossed_edge}
  there is an uncrossed edge~$ub$ in~$D$. The uncrossed neighbors
  of~$u$ in~$D$ are~$h$ and~$v$, but we know that~$b\notin\{h,v\}$, a
  contradiction. This proves our claim that~$vb$ is doubly crossed
  in~$D$. Denote the second (other than~$\alpha$) crossing of~$vb$
  by~$\beta$.

  Denote by~$P$ the uncrossed path~$\alpha uv$ in~$D$ (the first arc
  is along the edge~$xu$), and let~$P_\circ$ denote the uncrossed
  closed path~$\alpha uv\alpha$ in~$D$ (first arc along the edge~$xu$
  and last arc along the edge~$vb$). Place~$h$ on the side of~$uv$
  that puts it on the same side of~$P_\circ$ as~$w$. In other words,
  place~$h$ on the side of $uv$ such that~$v$ and~$x$ are consecutive 
  in the rotation of $u$. Then~$P$ appears
  along the boundary of a face~$f$ of~$D$.

  The edges~$vb$ and~$ux$ can cross in two possible ways: Either so
  that~$P_\circ$ separates~$b,x$ from~$w$ or so that all of~$b,w,x$
  are on the same side of~$P_\circ$; see
  \cref{fig:degree4_doubly_crossed:1} for illustration. In the former
  case the crossing~$\beta$ appears on~$\partial f$, together
  with~$v$, which is a contradiction to \cref{lem:credge}. It follows
  that we always are in the latter case and all of~$b,h,w,x$ are on
  the same side of~$P_\circ$. Then we
  have~$P_\circ\subseteq\partial f$ and, in fact, we claim
  that~$P_\circ=\partial f$. To see this observe that any vertex
  located inside~$f$ is not connected to~$u$, by assumption
  on~$\deg(u)$ and the location of~$h,w,x$ with respect to~$P_\circ$
  and~$f$. Thus, if any vertex is located inside~$f$, then~$v$ is a
  cut-vertex of~$G$, in contradiction to \cref{thm:2connected}. It
  follows that~$P_\circ=\partial f$, as claimed, and so the face~$f$
  is a triangle, as claimed in the statement of the lemma.

  \begin{figure}[htbp]
    \hfill\includegraphics[page=1]{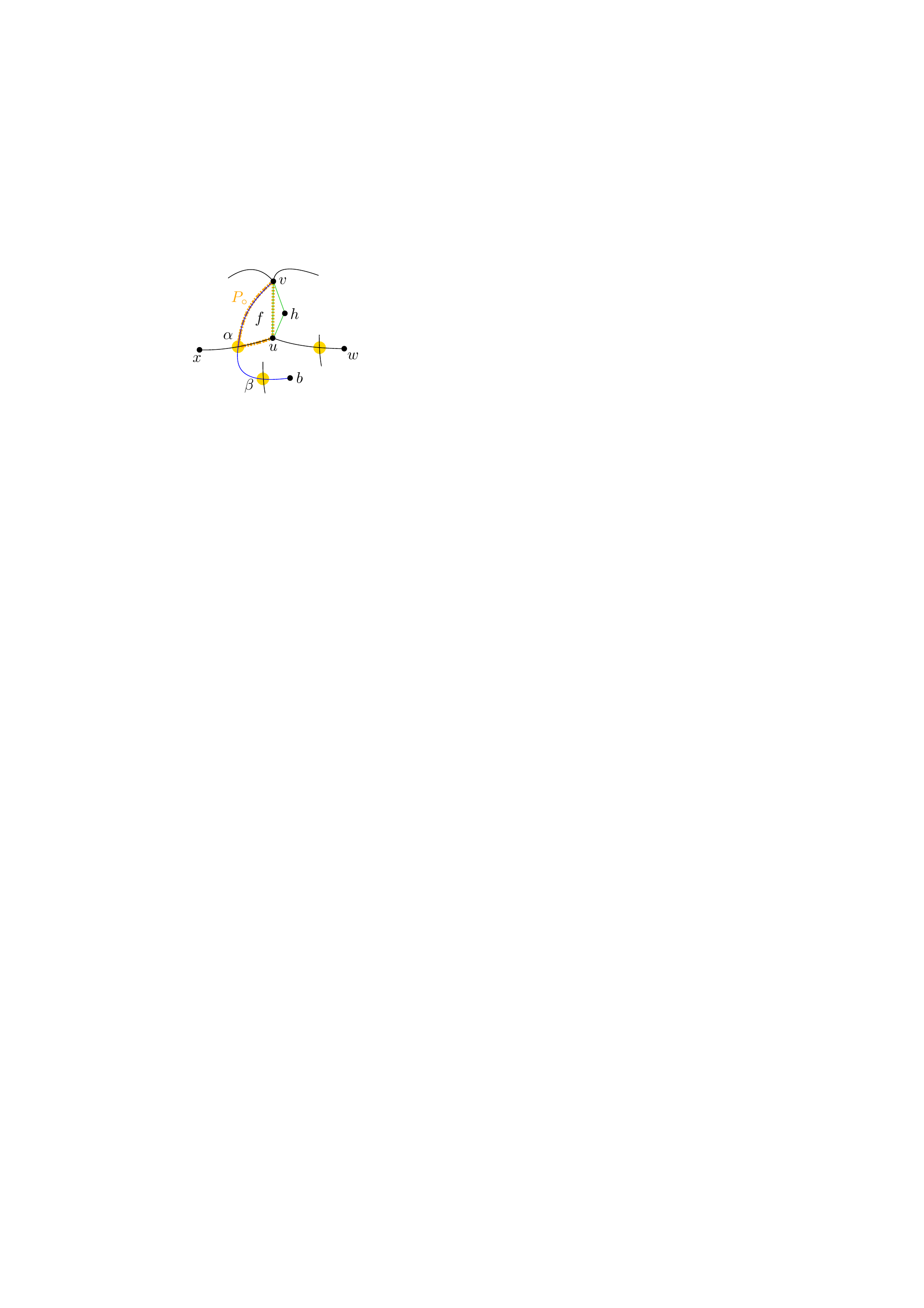}\hfill
    \includegraphics[page=2]{degree4_doubly_crossed}\hfill\hfill
    \caption{Proof of \cref{lem:degree4_hermit}: Two ways the
      edges~$ux$ and~$vb$ may cross.}
    \label{fig:degree4_doubly_crossed:1}
  \end{figure}
  
  By symmetry (between~$x$ and~$w$) there exists a doubly crossed
  edge~$vc$ in~$D$ that crosses~$uw$ at a crossing~$\gamma$ so
  that~$uv\gamma$ bounds a triangular face in~$D\setminus h$. Next, we
  show that both~$uw$ and~$ux$ are doubly crossed in~$D$.

  Assume for a contradiction that one of~$ux$ or~$uw$, say, the
  edge~$ux$ is singly crossed in~$D$. Consider the edge~$vc$, which
  crosses~$uw$ at~$\gamma$, let~$pq$ be the first edge that
  crosses~$vc$ when traversing~$vc$ starting from~$c$, and denote this
  crossing by~$\delta$. As~$vc$ is doubly crossed in~$D$, we
  have~$\delta\ne\gamma$. By \cref{lem:crneighbor} at least one
  endpoint of~$pq$ is adjacent to~$c$ via an uncrossed edge in~$D$,
  and further the arc of $pq$ between this endpoint and $\delta$ is uncrossed;
  let~$p$ be such an endpoint. We distinguish two cases, refer to
  \cref{fig:degree4_doubly_crossed:2} for illustration.

  \begin{figure}[htbp]
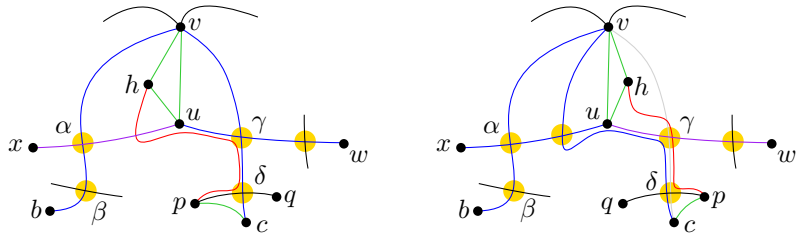

    \hfill\includegraphics[page=3]{degree4_doubly_crossed}\hfill
    \includegraphics[page=4]{degree4_doubly_crossed}\hfill\hfill
    \caption{Proof of \cref{lem:degree4_hermit}. Assuming~$ux$ is singly
      crossed only, there are two ways the edges~$vc$ and~$pq$ may
      cross. Either way, we can add a new edge (shown red) to arrive at
      a contradiction.}
    \label{fig:degree4_doubly_crossed:2}
  \end{figure}
  
  \begin{case}[The directed edges~$pq$ and~$uw$ cross~$vc$ from the
    same side in~$D$]
    Then~$u\gamma\delta p$ is an uncrossed path that appears along the
    boundary of a face in~$D$. We place~$h$ inside the triangular
    face~$uv\alpha$ of~$D$. Then we can add a singly crossed edge~$hp$
    by starting from~$h$, then crossing~$ux$ and then following the
    uncrossed path~$u\gamma\delta p$ to~$p$. Only one new crossing is
    created, between the new edge~$hp$ and the edge~$ux$, which is
    singly crossed only in~$D$. Thus, the resulting drawing is
    $2$-plane, in contradiction to the maximality of~$G$.
  \end{case}

  \begin{case}[The directed edges~$pq$ and~$uw$ cross~$vc$ from
    different sides in~$D$]
    We place~$h$ inside the triangular face~$vu\gamma$ of~$D$. Then we
    redraw the edge~$vc$ to start from~$v$ into the triangular
    face~$uv\alpha$, then cross~$ux$ and follow the uncrossed
    path~$u\gamma\delta$ to~$\delta$, and continue from there along
    its original drawing. Denote the resulting drawing by~$D'$. The
    edge~$vc$ is doubly crossed in both~$D$ and~$D'$, the edge~$ux$ is
    singly crossed in~$D$ and doubly crossed in~$D'$, and the
    edge~$uw$ is doubly crossed in~$D$ and singly crossed in~$D'$.  We
    can add a new edge~$hp$ to~$D'$ by starting from~$h$, then
    cross~$uw$ at~$\gamma$, continue along the drawing of~$vc$ in~$D$
    to~$\delta$, and then along the uncrossed arc~$\delta p$ of~$pq$
    to~$p$. The new edge~$hp$ is singly crossed and~$uw$ is doubly
    crossed, so the resulting drawing is $2$-plane, in contradiction
    to the maximality of~$G$.
  \end{case}
  
  In both cases we arrive at a contradiction. It follows that
  both~$ux$ and~$uw$ are doubly crossed in~$D$, as claimed in the
  statement of the lemma.

  It remains to prove the statements concerning the degree of~$v$ and
  the number of hermits adjacent to~$v$ in case~$\deg(v)=6$.

  We have identified four neighbors of~$v$ in~$G$ already: the
  vertices~$h,u,b,c$. If these are the only neighbors of~$v$, then we
  can place~$v$ and~$h$ on the other side of the uncrossed
  path~$\alpha u\gamma$ and redraw the edges~$vb$ and~$vc$ accordingly
  without crossing~$ux$ and~$uw$, respectively; see
  \cref{fig:degree4_hermit_nodeg5}~(left) for illustration. The
  resulting drawing of~$G$ is $2$-plane and has fewer crossings
  than~$D$, a contradiction. Therefore, we have~$\deg(v)\ge 5$. Denote
  the fifth neighbor (other than~$h,u,b,c$) of~$v$ in~$G$ by~$y$. 

  \begin{figure}[htbp]
    \centering
    \begin{minipage}[t]{.48\linewidth}
      \centering\includegraphics[page=4]{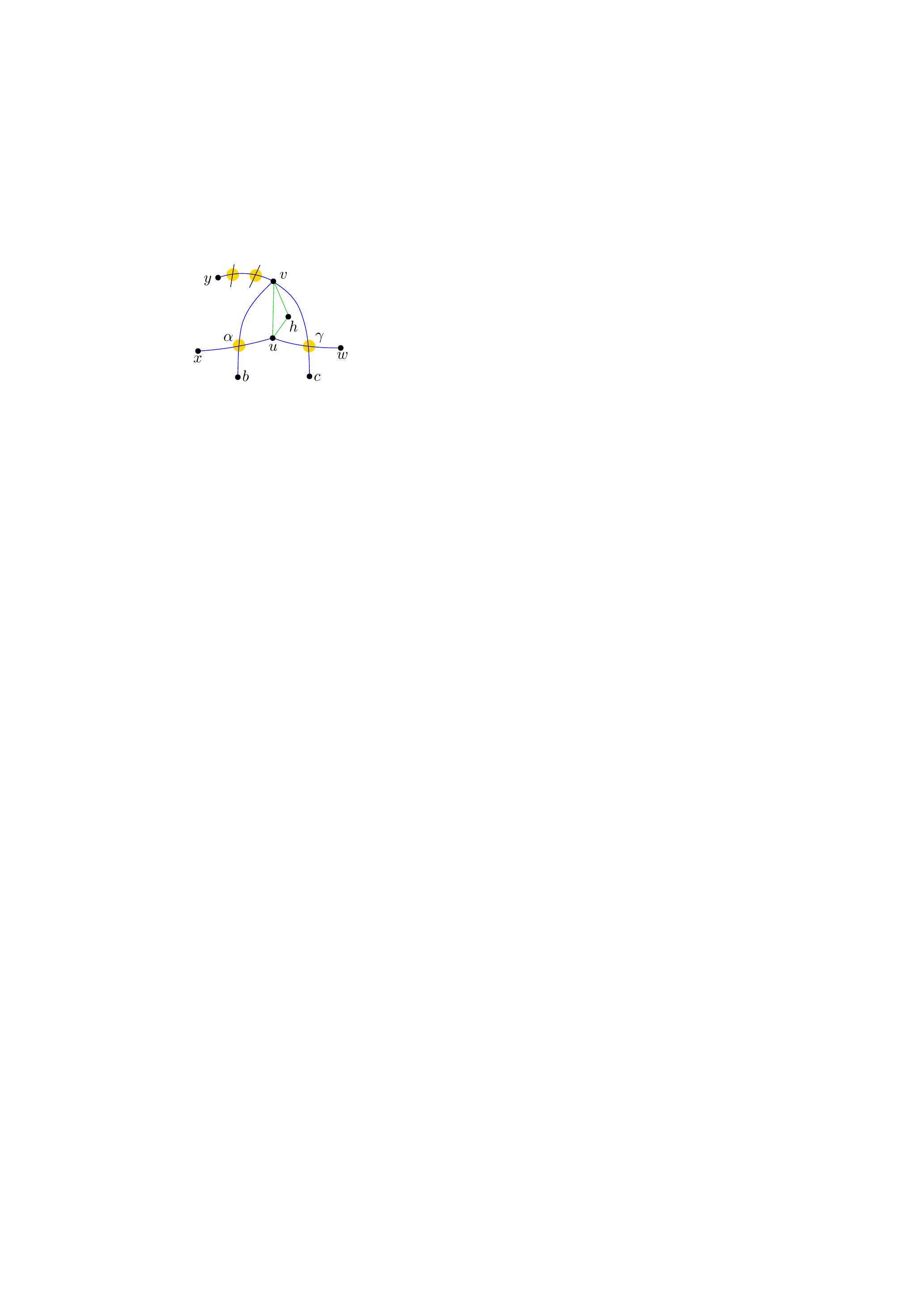}
    \end{minipage}
    \begin{minipage}[t]{.48\linewidth}
      \centering\includegraphics[page=3]{degree4_hermit_fig1}
    \end{minipage}
    \caption{Redrawings to obtain a contradiction:
      if~$\deg(v)=4$~(left) and if~$\deg(v)=6$ and~$v$ is adjacent to
      two hermits~(right).}
    \label{fig:degree4_hermit_nodeg5}
  \end{figure}
  
  Suppose for a contradiction that~$\deg(v)=5$. If $vy$ was uncrossed, 
  we can again redraw similar to \cref{fig:degree4_hermit_nodeg5}~(left)
  and redraw edge $vy$ to cross $ux$ or $uw$ at $\alpha$ or $\gamma$,
  resulting in a $2$-plane drawing of~$G$ with fewer crossings than $D$, which is not possible.
  Thus, $vy$ must be crossed.
  Let~$pq$ be the first
  edge that crosses~$vy$ when traversing~$vy$ starting from~$v$, and
  denote this crossing by~$\chi$. By \cref{lem:crneighbor} for at
  least one endpoint of~$pq$, suppose without loss of generality it
  holds for~$p$, the arc~$p\chi$ is uncrossed in~$D$ and~$pv$ is an
  uncrossed edge of~$D$. As the only neighbors of~$v$ along an
  uncrossed edge in~$D$ are~$h$ and~$u$ and both edges incident to~$h$
  are uncrossed (whereas~$pq$ is crossed), it follows that~$p=u$,
  which implies~$q\in\{w,x\}$. However, the first crossing of~$ux$
  when traversing~$ux$ starting from~$u$ is with~$vb$
  at~$\alpha\ne\chi$, and the first crossing of~$uw$ when
  traversing~$uw$ starting from~$u$ is with~$vc$
  at~$\gamma\ne\chi$. Hence, neither~$uw$ nor~$ux$ contain an
  uncrossed arc~$u\chi$, a contradiction. It follows
  that~$\deg(v)\ge 6$, as claimed in the statement of the lemma.

  Finally, suppose for a contradiction that~$\deg(v)=6$ and that~$v$
  is adjacent to two hermits in~$G$. Denote the second (other
  than~$h$) hermit adjacent to~$v$ by~$h'$, and let~$vy$ be the edge
  of~$G$ that hosts~$h'$. Then we can apply a similar redrawing as
  before, by placing both~$v$ and~$h$ on the other side of the
  uncrossed path~$\alpha u\gamma$ and let the two edges~$h'y$ and~$vy$
  cross~$uw$ and~$ux$ at~$\alpha$ and~$\gamma$, respectively; see
  \cref{fig:degree4_hermit_nodeg5}~(right) for illustration. The
  resulting drawing is $2$-plane and we can add an uncrossed
  edge~$hh'$, in contradiction to the maximality of~$G$. Therefore,
  if~$\deg(v)=6$, then~$v$ is adjacent to at most one hermit in~$G$,
  as claimed.
\end{proof}

\section{Proofs from \cref{sec:degreethree}: Degree-three vertices}

\subsection{T3-1 vertices}

\DegreeThreeTwoCrossed*
\begin{proof}\label{PDegreeThreeTwoCrossed}
  As~$u$ is a T3-1 vertex, the edge~$uv$ is the only uncrossed edge
  incident to~$u$ in~$D$ and the edges~$ux$ and~$uw$ are both crossed
  in~$D$.

  Let~$ab$ be the first edge that crosses~$ux$ when traversing~$ux$
  starting from~$u$, and denote this crossing by~$\alpha$. By
  \cref{lem:crneighbor} for at least one endpoint of~$ab$, suppose
  without loss of generality it holds for~$a$, the arc~$a\alpha$ is
  uncrossed in~$D$ and~$ua$ is an uncrossed edge of~$D$. As the only
  neighbor of~$u$ via an uncrossed edge in~$D$ is~$v$, it follows
  that~$a=v$.

  Next we claim that~$vb$ is doubly crossed in~$D$. Suppose for a
  contradiction that~$vb$ is singly crossed only. Then~$u$ and~$b$
  appear on a common face in~$D$, and so by \cref{lem:uncrossed_edge}
  there is an uncrossed edge~$ub$ in~$D$. As the only neighbor of~$u$
  via an uncrossed edge in~$D$ is~$v$, it follows that~$b=v$, in
  contradiction to~$G$ being a simple graph (without loops). Hence,
  the edge~$vb$ is doubly crossed in~$D$, as claimed. Denote the
  second (other than~$\alpha$) crossing of~$vb$ by~$\beta$.

  Let~$P_\circ$ denote the uncrossed closed path~$\alpha u v\alpha$
  in~$D$, where the first arc is along~$xu$ and the last arc is
  along~$vb$. The edge~$vb$ can cross~$ux$ in two possible ways:
  either so that~$P_\circ$ separates~$x$ and~$w$ or so that~$x$
  and~$w$ are on the same side of~$P_\circ$; see
  \cref{fig:degree3_doubly_crossed:1} for illustration. In the former
  case the crossing~$\beta$ appears on a common face with~$v$, in
  contradiction to \cref{lem:credge}. It follows that we always are in
  the latter case and all of~$b,w,x$ are on the same side
  of~$P_\circ$. Further, no vertex adjacent to $v$ can lie in the face
  of~$D$ bounded by~$P_\circ$ since $G$ is $2$-connected. 
  In particular, the face of~$D$ bounded by~$P_\circ$ is
  a triangle, which is incident to~$uv$ and bounded by (parts of)
  edges incident to~$u$ and doubly crossed edge incident to~$v$, as
  claimed in the statement of the lemma.

  \begin{figure}[htbp]
    \hfill\includegraphics[page=1]{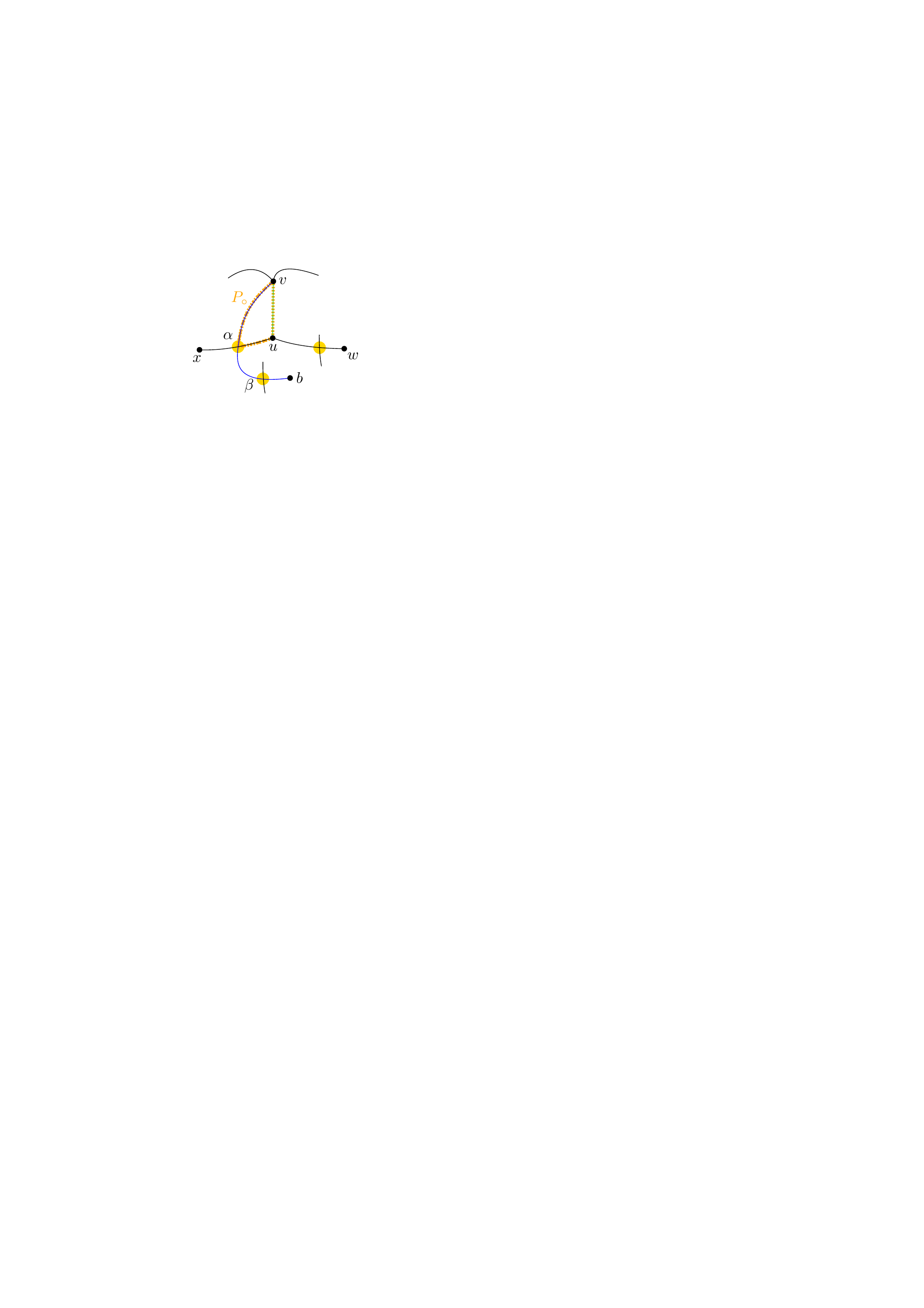}\hfill
    \includegraphics[page=2]{degree3_doubly_crossed}\hfill\hfill
    \caption{Proof of \cref{lem:degree3_2crossed_neighbordeg5}: Two
      ways the edges~$ux$ and~$vb$ may cross.}
    \label{fig:degree3_doubly_crossed:1}
  \end{figure}

  By symmetry (between~$x$ and~$w$) there exists a doubly crossed
  edge~$vc$ in~$D$ that crosses~$uw$ at a crossing~$\gamma$ so
  that~$uv\gamma$ bounds a triangular face in~$D$. 

  It remains to show that~$\deg(v)\ge 5$. We have identified three
  neighbors of~$v$ in~$G$ already: the vertices~$u,b,c$. If these are
  the only neighbors of~$v$, then we can place~$v$ on the other side
  of the uncrossed path~$\alpha u\gamma$ and redraw the edges~$vb$
  and~$vc$ accordingly without crossing~$ux$ and~$uw$, respectively;
  see \cref{fig:degree3_nodeg5}~(left) for illustration. The resulting
  drawing of~$G$ is $2$-plane and has fewer crossings than~$D$, a
  contradiction. Therefore, we have~$\deg(v)\ge 4$. Denote the fourth
  neighbor (other than~$u,b,c$) of~$v$ in~$G$ by~$y$. 

  \begin{figure}[htbp]
    \centering
    \begin{minipage}[t]{.48\linewidth}
      \centering\includegraphics[page=3]{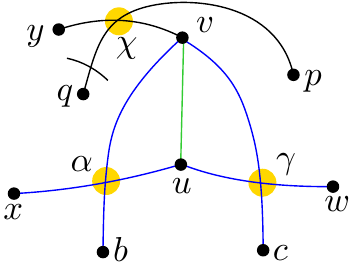}
    \end{minipage}
    \begin{minipage}[t]{.48\linewidth}
      \centering\includegraphics[page=1]{degree3_1}
    \end{minipage}
    \caption{Redrawing to obtain a contradiction
      if~$\deg(v)=3$~(left); if~$\deg(v)=4$, then~$v$ and~$p$ appear
      on a common face~(right).}
    \label{fig:degree3_nodeg5}
  \end{figure}
  
  Suppose for a contradiction that $\deg(v)=4$. 
  If $vy$ was uncrossed, we can again redraw similar to \cref{fig:degree3_nodeg5}~(left)
  and redraw edge $vy$ to cross $ux$ or $uw$ at $\alpha$ or $\gamma$,
  resulting in a $2$-plane drawing of~$G$ with fewer crossings than $D$, which is not possible.
  Thus, $vy$ must be crossed.Let~$pq$ be the first edge that crosses~$vy$ when traversing~$vy$
  starting from~$v$, and denote this crossing by~$\chi$; see
  \cref{fig:degree3_nodeg5}~(right) for illustration. By
  \cref{lem:crneighbor} for at least one endpoint of~$pq$, suppose
  without loss of generality it is~$p$, the arc~$p\chi$ is uncrossed
  in~$D$ and~$pv$ is an uncrossed edge of~$D$. As the only neighbor
  of~$v$ along an uncrossed edge in~$D$ is~$u$, it follows that~$p=u$,
  which implies~$q\in\{w,x\}$. However, the first crossing of~$ux$
  when traversing~$ux$ starting from~$u$ is with~$vb$
  at~$\alpha\ne\chi$, and the first crossing of~$uw$ when
  traversing~$uw$ starting from~$u$ is with~$vc$
  at~$\gamma\ne\chi$. Hence, neither~$uw$ nor~$ux$ contain an
  uncrossed arc~$u\chi$, a contradiction. It follows
  that~$\deg(v)\ge 5$, as claimed in the statement of the lemma.
\end{proof}

\subsection{T3-2 vertices}

\CRadJ*
\begin{proof}\label{PCRadJ}
  Let~$u$ be a T3-2 vertex in~$D$ such that~$uv$ is the unique
  incident edge that is crossed in~$D$. Denote the other two neighbors
  of~$u$ in~$G$ by~$w$ and~$x$. Let~$e=ab$ be the first edge that
  crosses~$uv$ when traversing it starting from~$u$, and denote this
  crossing by~$\alpha$. By \cref{lem:crneighbor} at least one endpoint
  of~$e$ is adjacent to~$u$ in~$D$ via an uncrossed edge; let~$a$ be
  such a neighbor. By assumption there are exactly two uncrossed edges
  incident to~$u$ in~$D$, which implies~$a\in\{w,x\}$. Suppose without
  loss of generality that~$a=w$.

  First we show that~$uv$ is singly crossed in~$D$. Suppose for a
  contradiction that~$uv$ is doubly-crossed. We devise another
  crossing-minimal $2$-plane drawing~$D'$ of~$G$ with strictly fewer
  doubly crossed edges than~$D$, in contradiction to~$D$ being
  admissible (i.e., minimizing the number of doubly crossed edges
  among all crossing-minimal $2$-plane drawings of~$G$).

  Observe that we can redraw the edge~$e$ starting from~$w$ to
  approach~$\alpha$ from either side of~$uv$: One side can be reached
  by following the uncrossed edge~$wu$ towards~$u$ and then the
  uncrossed arc~$u\alpha$ of~$uv$ to~$\alpha$; the other side can be
  reached by following the uncrossed path~$wux$, then crossing~$uw$
  and following the uncrossed arc~$u\alpha$ of~$uv$ to~$\alpha$. So to
  obtain~$D'$ we redraw~$e$ to approach~$\alpha$ on the side from
  where~$e$ can continue along its original drawing to~$b$ without
  crossing~$uv$; see \cref{fig:Xi_prop2} for illustration. This
  eliminates a crossing on~$uv$, making~$uv$ a singly crossed
  edge. The number of crossings along~$e$ does not increase, as one
  crossing (at~$\alpha$) is eliminated and at most one other crossing
  (with~$ux$) is introduced. The edge~$ux$ is uncrossed in~$D$, so it
  is at most singly crossed in the new drawing~$D'$. The number of
  crossings along all other edges remains unchanged, and the overall
  number of crossings does not increase. Hence, the drawing~$D'$ is
  $2$-plane, crossing-minimal, and the number of doubly crossed edges
  is strictly smaller than in~$D$, a contradiction. It follows
  that~$uv$ is singly crossed, that is, the crossing~$\alpha$
  with~$wb$ is its only crossing in~$D$.

  \begin{figure}[htbp]
    \centering
    \includegraphics[scale=1]{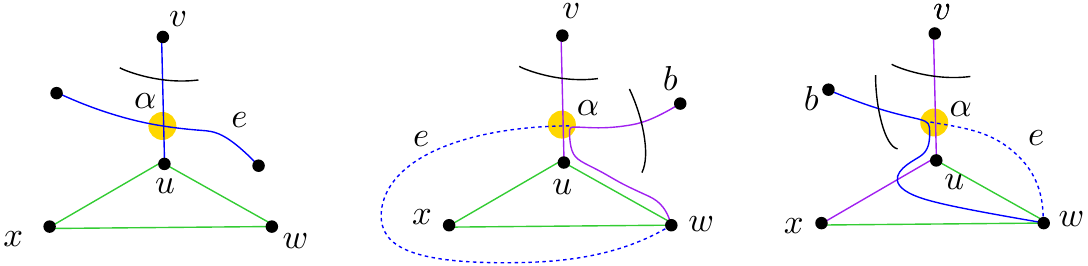}
    \caption{Redrawing the edge $e$ so as to decrease the number of doubly crossed edges.}
    \label{fig:Xi_prop2}
  \end{figure}
  
  The arc~$w\alpha$ of~$wb$ is uncrossed by \cref{lem:crneighbor}. So
  it remains to show that~$wb$ is doubly crossed in~$D$. Suppose for a
  contradiction that~$wb$ is singly crossed, that is, its only
  crossing in~$D$ is~$\alpha$. Then~$b$ and~$u$ are on a common face
  in~$D$. So by \cref{lem:uncrossed_edge} there is an uncrossed
  edge~$ub$ in~$D$, which implies that~$b\in\{w,x\}$. As the graph~$G$
  is simple (no loops), we have~$b=x$. However, as~$wux$ is an
  uncrossed path in~$D$ and~$\deg(u)=3$, the vertices~$x$ and~$w$ lie
  on a common face and, therefore, they are connected by an uncrossed
  edge in~$D$. In particular, the edge~$xw$ does not cross~$uv$
  in~$D$, a contradiction. It follows that~$wb$ is doubly crossed
  in~$D$, as claimed.
\end{proof}

\TTTChoose*
\begin{proof}\label{PTTTChoose}
  Obtain~$D'$ from~$D$ by redrawing the edge~$wb$ as follows:
  Leave from~$w$ on the other side of~$wu$, then follow~$wu$ to~$u$,
  cross~$ux$, then follow~$uv$ up to the crossing with the old drawing
  of~$wb$, and proceed from there along its original drawing
  to~$b$. For every edge the number of crossings remains the same,
  except for~$uv$ (singly crossed in~$D$ and uncrossed in~$D'$)
  and~$ux$ (uncrossed in~$D$ and singly crossed in~$D'$). Therefore,
  the drawing~$D'$ is admissible and it has the two listed properties.
\end{proof}

\subsection{Proof of~\cref{lem:degree3_1crossed_degrees555}}

As in the statement of the lemma, let~$u$ be a T3-2 vertex in~$D$, and
let~$v,w,x$ be the neighbors of~$u$ such that the edge~$uv$ is singly
crossed by a doubly crossed edge~$wb$, and the edges~$ux$ and~$uw$ are
uncrossed in~$D$. Denote the crossing of~$uv$ and~$wb$ by~$\alpha$,
and denote the second crossing of~$wb$ by~$\beta$.

The path~$xuw$ is uncrossed and part of the boundary of a face in~$D$,
so by \cref{lem:uncrossed_edge} there is an uncrossed edge~$wx$
in~$D$. Similarly, the path~$wu\alpha$ is uncrossed and part of the
boundary of a face~$f$ in~$D$. Thus, by \cref{lem:credge} the
edge~$wb$ is uncrossed between~$w$ and~$\alpha$, whereas~$w$
and~$\beta$ do not appear on a common face in~$D$. As~$uv$ has a
single crossing only, at~$\alpha$, it follows that~$w$ and~$v$ are on
a common face in~$D$, and, therefore, they are adjacent in~$G$ by
\cref{lem:uncrossed_edge}. So we are facing a drawing as depicted in
\cref{fig:wdeg5}~(left), not as in \cref{fig:wdeg5}~(middle),
where~$w$ and~$\beta$ appear on a common face.\medskip

%

\begin{lemma}
  We have~$\deg(w)\ge 4$.
\end{lemma}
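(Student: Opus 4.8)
The plan is to prove $\deg(w)\ge 4$ by directly exhibiting four distinct neighbors of $w$ and invoking the definition of degree. The discussion preceding the statement already furnishes three uncrossed edges incident to $w$. The edge $wu$ is uncrossed by the hypothesis of \cref{lem:degree3_1crossed_degrees555}. The edge $wx$ exists and is uncrossed because $xuw$ is an uncrossed path along the boundary of a face, so $w$ and $x$ lie on a common face and \cref{lem:uncrossed_edge} applies. Finally, since $uv$ is singly crossed (only at $\alpha$), the vertices $w$ and $v$ appear on a common face, and hence $wv$ is an uncrossed edge, again by \cref{lem:uncrossed_edge}. In addition, $w$ is incident to the doubly crossed edge $wb$ by assumption.

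First I would record that $u,v,x$ are pairwise distinct: they are precisely the three neighbors of the degree-three vertex $u$, so none of them coincides. This already yields three distinct neighbors of $w$. It then remains to argue that $b$ is a fourth neighbor, distinct from $u,v,x$. Here the key observation is a crossing-parity argument: the edge $wb$ is doubly crossed, whereas $wu$, $wv$, and $wx$ are all uncrossed. As $G$ is simple, each pair of vertices spans at most one edge, so no edge can be simultaneously uncrossed and doubly crossed; therefore $b\notin\{u,v,x\}$. Consequently $u,v,x,b$ are four distinct neighbors of $w$, and $\deg(w)\ge 4$, as claimed.

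I expect no genuine obstacle in this step, as it is essentially bookkeeping built on facts established just above the statement. The only point that deserves to be spelled out is the distinctness of $b$ from each of the three uncrossed neighbors, and that follows immediately from the mismatch in the number of crossings together with the simplicity of $G$. (The inequality $\deg(w)\ge 4$ is presumably a waypoint toward the stronger bound $\deg(w)\ge 5$ of \cref{lem:degree3_1crossed_degrees555}, where the additional argument will have to produce a fifth neighbor of $w$ beyond $u,v,x,b$.)
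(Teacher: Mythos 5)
Your proposal is correct and follows essentially the same route as the paper: both exhibit $u,v,x,b$ as four neighbors of $w$ and check pairwise distinctness. The only (minor) difference is in ruling out $b\in\{u,v\}$ — the paper notes that $wb$ crosses $uv$ and adjacent edges cannot cross in a simple drawing, while you use the mismatch between the doubly crossed $wb$ and the uncrossed edges $wu,wv$; both are valid one-line observations.
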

\begin{proof}
  We have established all of~$u,x,v,b$ to be neighbors of~$w$
  in~$G$. It remains to argue that they are pairwise
  distinct. For~$u,x,w$ this is the case by assumption (that~$v,x,w$
  are the three neighbors of~$u$ in~$G$). As~$uv$ crosses~$wb$, we
  have~$b\notin\{u,v\}$ because~$D$ is simple. As the edge~$wx$ is
  uncrossed but the edge~$wb$ is crossed, we also have~$b\ne x$.
\end{proof}


\begin{figure}[htbp]
  \centering\includegraphics[page=1]{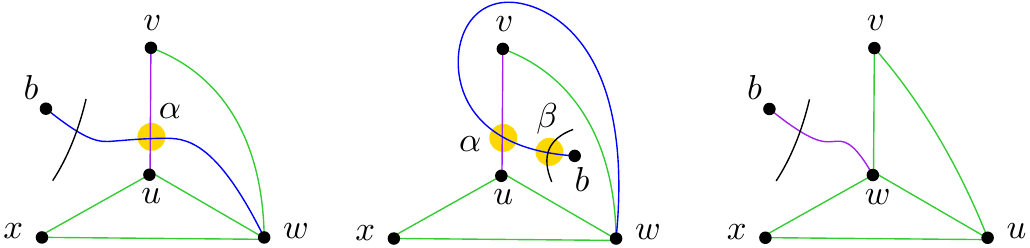}
  \caption{The situation around a T3-2 vertex~$u$~(left). A drawing
    that is not admissible because~$w$ and~$\beta$ appear on a common
    face~(middle). Removing a crossing if~$\deg(w)=4$~(right).}
  \label{fig:wdeg5}
\end{figure}

\DegreeThreeOneCrossed*
\begin{proof}\label{PDegreeThreeTwoOneCrossed}
  We know that~$w$ is adjacent to all of~$u, v, x$ and~$b$
  in~$G$. If~$\deg(w)=4$, then we can exchange the position of~$u$
  and~$w$ in the drawing; see \cref{fig:wdeg5}~(right). This
  eliminates the crossing between~$uv$ and~$wb$ at~$\alpha$ without
  introducing any crossing, which is a contradiction to the crossing
  minimality of~$D$. Thus, we have~$\deg(w)\ge 5$.

  Observe that the local roles of~$v$ and~$x$ are symmetric, as,
  by Lemma~\ref{prop:t32-choose}, the
  edge~$wb$ can be drawn to cross any one of~$uv$ or~$ux$ without
  changing anything else. Therefore it suffices to show
  that~$\deg(v)\ge 4$ and~$\deg(x)\ge 4$ follows by symmetry. We
  have~$\deg(v)\ge 3$ because there is a crossed edge incident to~$v$
  in~$D$ and both edges incident to a hermit are uncrossed by
  \cref{lem:hermit_uncrossed}. Denote the third neighbor (other
  than~$u,w$) of~$v$ in~$G$ by~$y$.

  Now assume for a contradiction that~$\deg(v)=3$. Then~$y\ne x$
  because if~$y=x$ we could redraw~$v$ close to~$u$, in the face
  bounded by the uncrossed edges~$uw$ and~$ux$, so that all edges
  incident to~$v$ are uncrossed without introducing any crossing. At
  least one crossing, the one between~$uv$ and~$wb$, is eliminated
  this way, in contradiction to the crossing minimality
  of~$D$. Therefore, we have~$y\ne x$.

  We claim that the edge~$vy$ is crossed. If~$vy$ is uncrossed, then
  we can redraw the edge~$wb$ to cross~$ux$ rather than~$uv$ and then
  draw an uncrossed edge from~$u$ along the uncrossed edges~$uv$
  and~$vy$ to~$y$. This is a contradiction to the maximality of~$G$
  and, therefore, the edge~$vy$ is crossed, as claimed.

  Let~$st$ be the first edge that crosses~$vy$ when traversing~$vy$
  starting from~$v$, and denote this crossing by~$\sigma$. By
  \cref{lem:crneighbor} at least one endpoint of~$st$ is connected by
  an uncrossed edge to~$v$ in~$D$; let~$s$ be such an
  endpoint. Then~$s\in\{u,w\}$ and as we know all edges incident
  to~$u$ and none of them crosses~$vy$ in~$D$, we have~$s=w$. 

  Clearly, we have~$t\ne u$ because the edge~$wu$ is uncrossed
  and~$t\ne y$ because~$wt$ crosses~$vy$ and~$D$ is simple.  We claim
  that the edge~$wt$ is doubly crossed. Suppose that~$wt$ is singly
  crossed. Then~$v$ and~$t$ appear on a common face in~$D$ and,
  therefore, they are adjacent in~$G$, in contradiction
  to~$\deg(v)=3$. Thus, the edge~$wt$ is doubly crossed, as
  claimed. Denote its second (other than~$\sigma$) crossing by~$\tau$.

  By \cref{lem:credge} we know that~$w$ and~$\tau$ are not on a common
  face in~$D$, and so~$v$ and~$\tau$ \emph{are} on a common face~$f$
  in~$D$. Redraw the edge~$wb$ to cross~$ux$ instead of~$uv$, and
  redraw the edge~$wt$ starting from~$w$ to follow~$wu$, then
  cross~$uv$, and then proceed across~$f$ to~$\tau$, and from there on
  along its original drawing; see \cref{fig:wdeg52}~(middle). The
  resulting drawing~$D'$ has exactly the same number of crossings
  as~$D$. The number of crossings per edge change only for~$ux$, which
  is uncrossed in~$D$ and singly crossed in~$D'$, and~$vy$, which has
  one fewer crossing in~$D'$ than in~$D$. So~$D'$ is a
  crossing-minimal $2$-plane drawing of~$G$.

  If~$vy$ is doubly crossed in~$D$, then~$D'$ has strictly fewer
  doubly crossed edges than~$D$, a contradiction to~$D$ being
  admissible. It follows that~$vy$ is singly crossed in~$D$ and
  uncrossed in~$D'$. But then we can redraw~$wt$ to cross~$ux$ instead
  of~$uv$ and add an uncrossed edge~$uy$, which, noting
  that~$y\notin\{w,v,x\}$, is a contradiction to the maximality
  of~$G$; see \cref{fig:wdeg52}~(right). We conclude
  that~$\deg(v)\ge 4$, as claimed.
\end{proof}

\begin{figure}[htbp]
  \centering\includegraphics[page=2]{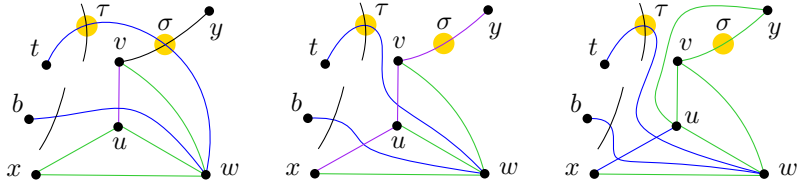}
  \caption{The situation around a T3-2 vertex~$u$ with a degree-three
    neighbor~$v$~(left). If the edge~$vy$ is doubly crossed in~$D$, a
    redrawing of~$wt$ yields an admissible drawing with fewer doubly
    crossed edges~(middle). Otherwise, the edge~$vy$ is singly crossed
    in~$D$, and a redrawing of~$wt$ makes~$vy$ uncrossed, which allows
    to add an edge~$uy$~(right).}
  \label{fig:wdeg52}
\end{figure}

\subsection{T3-3 hermits}

\DegThreeHermit*
\begin{proof}\label{PDegThreeHermit}
  By \cref{lem:efficient_hermit_neighbors} both neighbors of a hermit
  have degree at least four. It follows that~$\deg(z')=3$. As~$z$ is a
  T3-3 vertex, the edge~$zz'$ is uncrossed in~$D$. The neighbor of a
  T3-1 vertex along the (only) uncrossed edge has degree at least five
  by \cref{lem:degree3_2crossed_neighbordeg5}. Thus, we know that~$z'$
  is not of type T3-1. As by \cref{lem:degree3_1crossed_degrees555}
  all neighbors of a T3-2 vertex have degree at least four, we
  conclude that~$z'$ is not of type T3-2, either. The only remaining
  option is that~$z'$ is a T3-3 vertex, which forms an inefficient
  hermit with~$z$.
\end{proof}

\IneffHermitThree*
\begin{proof}\label{PIneffHermitThree}
  As~$xzy$ is an uncrossed path in~$D$, by \cref{lem:uncrossed_edge}
  we have an uncrossed edge~$xy$ in~$D$. Assume without loss of
  generality that the cycle~$xzy$ bounds a face in~$D$ (if necessary
  redraw the edge~$xy$ to closely follow the path~$xzy$). As~$G$ has
  at least five vertices and it is $2$-connected by
  \cref{thm:2connected}, both~$x$ and~$y$ have at least one neighbor
  outside of~$x,y,z,z'$. Thus, there is a (unique) face~$\psi$ in~$D$
  that contains the path~$xz'y$ on its boundary, along with parts of
  at least two more edges: an edge~$e$ incident to~$x$ and an edge~$f$
  incident to~$y$, so that~$e,f,xy$ are pairwise distinct. Similarly,
  there is a (unique) face~$\psi'$ in~$D$ that contains the edge~$xy$
  on its boundary but neither of the vertices~$z$ and~$z'$. Let~$e'$
  and~$f'$ denote the edge on~$\partial\psi'$ incident to~$x$ and~$y$,
  respectively, that is not the common edge~$xy$. See
  \cref{fig:degree3_faces}~(left) for illustration.
  
  \begin{figure}[htbp]
    \hfill\includegraphics[scale=1,page=1]{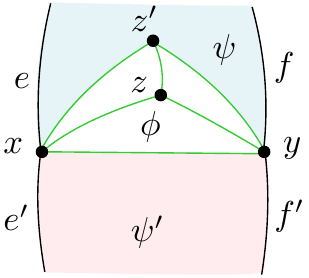}\hfill
    \includegraphics[scale=1,page=2]{degree3_faces}\hfill
    \includegraphics[scale=1,page=3]{degree3_faces}\hfill\hfill
    \caption{Proof of \cref{lem:deg3hermit_neighbors}: An inefficient
      hermit~$z,z'$ and its neighborhood (left); suppose that~$e=e'$
      (middle); redraw to obtain a contradiction (right).}
    \label{fig:degree3_faces}
  \end{figure}
  
  Since~$G$ is maximal $2$-planar and~$z'$ has a degree of exactly
  three, no other (than~$x,y,z'$) vertex of~$G$ is on~$\partial\psi$:
  If there was such a vertex, then it would be neighbor of~$z'$ by
  \cref{lem:uncrossed_edge}. Analogously, no other (than~$x,y$) vertex
  of~$G$ is on~$\partial\psi'$ because we could easily change the
  drawing by moving both~$z$ and~$z'$ to the other side of~$xy$. In
  particular, it follows that all of~$e,f,e',f'$ are crossed in~$D$.
  
  For a contradiction, suppose that~$\deg(x)=4$, which
  implies~$e = e'$. Let~$g$ denote the first edge that crosses~$e$
  when traversing it starting from~$x$, and denote this crossing
  by~$\chi$. As~$g$ has at most two crossings in~$D$, the arc of~$g$
  on one side of~$\chi$ is uncrossed in~$D$. As~$G$ is maximal
  $2$-planar, by \cref{lem:uncrossed_edge} the endpoint of~$g$ along
  this uncrossed arc is a neighbor of~$x$, which implies that this
  endpoint is~$y$.

  The simple closed curve~$C=x\chi y$ formed by the edge~$xy$ together
  with the corresponding arcs of~$e$ and~$g$ is uncrossed
  in~$D$. Denote~$V^-=V(D)\setminus\{x,y,z,z'\}$. We claim
  that all vertices of~$V^-$ lie on the same side of~$C$. To see this
  observe that~$z$ and~$z'$ have no neighbor in~$V^-$ and~$x$ has
  exactly one neighbor in~$V^-$. Hence, if there are vertices of~$V^-$
  on both sides of~$C$, then removal of~$y$ separates~$G$, in
  contradiction to~$G$ being $2$-connected. This proves our claim that
  all vertices of~$V^-$ lie on the same side of~$C$. In particular, it
  follows that~$g\in\{f,f'\}$.

  Observe that the situation is symmetric between~$f$ and~$f'$:
  If~$f=f'$ this is vacuously true; otherwise, we may change the
  drawing so that both~$z$ and~$z'$ appear on the other side of~$xy$,
  effectively exchanging the roles of~$\psi$ and~$\psi'$ as well as
  the roles of~$f$ and~$f'$. Hence, we may assume without loss of
  generality that~$g=f$. See \cref{fig:degree3_faces}~(middle) for
  illustration.

  Then we can redraw~$G$ as follows: Remove all of~$x,z,z'$ and place
  them in the face~$\psi''$ on the other side of~$f$. The $K_4$
  subdrawing formed by~$x,y,z,z'$ can be drawn without crossings
  inside~$\psi''$ and the edge~$e$ is drawn by following~$f$ up
  to~$\chi$ and then along the original drawing of~$e$ in~$D$. See
  \cref{fig:degree3_faces}~(right) for illustration. No crossing is
  introduced by these changes, and the crossing~$\chi$ is
  removed. Hence, the resulting drawing is a $2$-plane drawing of~$G$
  with strictly fewer crossings than~$D$, a contradiction to the
  crossing minimality of~$D$.

  It follows that~$\deg(e)\ge 5$ and, analogously,
  also~$\deg(f)\ge 5$.
\end{proof}

\subsection{T3-3 minglers}


\DegreeThreeUncrossed*
\begin{proof}\label{PDegreeThreeUncrossed}
  Denote by~$Q$ the~$K_4$-subdrawing induced by~$u,v,w,x$ in~$D$.
  Assume without loss of generality that all of~$xuv$, $uvw$,
  and~$xuw$ are triangular faces of~$D$. If, say, the triangle~$xuv$
  of~$G$ does not bound a face in~$D$, then redraw the edge~$xw$ to
  closely follow the uncrossed path~$xuw$ in~$D$.


  By \cref{lem:deg3_inefficient_hermit} all of~$v,w,x$ have degree at
  least four. For a contradiction, assume the degree of all three
  vertices is exactly four.


  Let~$e$, $f$, and~$g$ be the (unique) edges incident to~$v$, $w$,
  and~$x$, respectively, that are not part of~$Q$. We claim that no
  two of~$e,f,g$ cross. To see this, suppose for a contradiction
  that~$e$ and~$f$ cross at a point~$\chi$. Then we can exchange the
  position of~$v$ and~$w$ in~$D$ and redraw the edges~$e,f$
  accordingly, so that~$e$ follows the original drawing of~$f$ and
  vice versa up to~$\chi$, from where on both edges continue along
  their own original drawing. This change eliminates one crossing
  ($\chi$) and maintains the number of crossings per edge for all
  edges other than~$e,f$. As the original drawing~$D$ is $2$-plane, at
  most one of the two arcs of~$e\setminus\chi$ has at most one
  crossing. Analogously, at most one of the two arcs
  of~$f\setminus\chi$ has at most one crossing. Therefore, the new
  drawings of both~$e$ and~$f$ have at most two crossings each. (In
  fact, if one of them has two, then the other has none.) So the
  modified drawing is $2$-plane and has fewer crossings than~$D$, in
  contradiction to the crossing-minimality of~$D$. This proves
  that~$e$ and~$f$ do not cross and, by symmetry, our claim that no
  two of the edges~$e,f,g$ cross.

  Still, we claim that each of~$e,f,g$ is crossed in~$D$. Consider,
  for instance, the edge~$e$, and let~$z\notin\{u,v,w,x\}$ denote its
  other endpoint. If~$e$ is uncrossed in~$D$, then we can add an
  edge~$uz$ that leaves~$u$ within the triangle~$uvw$, then
  crosses~$vw$ and follows~$e$ to~$z$. In the resulting drawing~$D'$
  both~$vw$ and~$uz$ have one crossing, and all other crossings are
  the same as in~$D$. Thus, the drawing~$D'$ is $2$-plane and the
  graph~$G\cup uz$ is $2$-planar, in contradiction to the maximality
  of~$G$. It follows that~$e$ and by symmetry also~$f$ and~$g$ are
  crossed in~$D$.

  Let~$h=ab$ be the first edge that crosses~$e$ when traversing~$e$
  starting from~$v$. By \cref{lem:crneighbor} at least one endpoint
  of~$h$ is adjacent to~$v$ in~$G$, assume without loss of generality
  that~$a$ is such an endpoint. It follows that~$a\in\{v,w\}$, that
  is, we have~$h\in\{f,g\}$. But we have shown above that~$e$ does not
  cross any of~$f$ or~$g$, a contradiction. Therefore, at least one
  of~$v,w,x$ has degree at least five.

  Without loss of generality assume that~$v$ has degree at least five.
  Suppose for a contradiction that~$\deg(v)=5$
  and~$\deg(w)=\deg(x)=4$. Let~$e_1$ and~$e_2$ denote the edge
  of~$D\setminus Q$ incident to~$v$ that is adjacent to~$vw$ and~$vx$,
  respectively, in the circular order of incident edges around~$v$
  in~$D$. As above, let $f$, and~$g$ be the (unique) edges incident
  to~$w$ and~$x$, respectively, that are not part of~$Q$. Using the
  same argument as above for~$e,f,g$ we see that all of~$e_1,e_2,f,g$
  are crossed in~$D$ and that~$f$ and~$g$ do not cross.

  Let~$h_1=ab$ be the first edge that crosses~$f$ when traversing~$f$
  starting from~$w$. By \cref{lem:crneighbor} at least one endpoint
  of~$h$ is adjacent to~$w$ in~$G$, assume without loss of generality
  that~$a$ is such an endpoint. It follows that~$a\in\{v,x\}$ and
  given that~$f$ and~$g$ do not cross, we have~$a=v$ and~$f$ is the
  first edge that crosses~$h_1$ when traversing~$h_1$ starting
  from~$v$. Analogously we conclude that the first edge~$h_2$ that
  crosses~$g$ when traversing it starting from~$x$ is incident to~$v$
  and~$g$ is the first edge that crosses~$h_2$ when traversing~$h_2$
  starting from~$v$. It follows that~$h_1=e_1$ and~$h_2=e_2$ and that
  the local drawing looks as shown in
  \cref{fig:degree3_uncrossed_fig1}. Then we can flip~$u$ and~$v$ to
  the other side of the edge~$xw$ and redraw the edges~$e_1$ and~$e_2$
  to follow~$f$ and~$g$, respectively, up to their original crossings
  and then continue along their original drawing. The resulting
  drawing is $2$-plane and has two fewer crossings than~$D$, a
  contradiction to the crossing minimality of~$D$.

  Therefore, one of~$w,x$ has degree at least five or~$\deg(v)\ge 6$.
\end{proof}

\begin{figure}[htbp]
  \hfill 
  \centering
  \subfigure[Original drawing.]{
    \includegraphics[page=1]{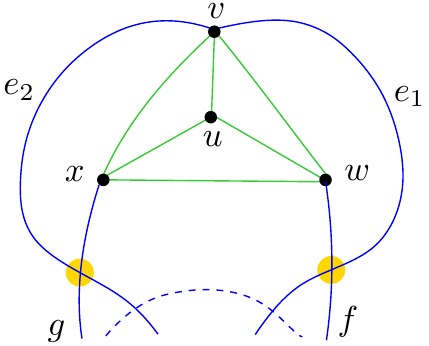}
    \label{fig:degree3_uncrossed_fig1}}
  \hfill
  \subfigure[Redrawing $u,v$
      and~$e_1,e_2$.]{
    \includegraphics[page=1]{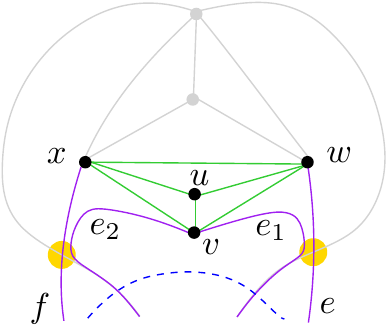}
    \label{fig:degree3_uncrossed_fig2}
  }
  \hfill\hfill
  \caption{Illustration of \cref{lem:degree_three_uncrossed_neighbors}
    where $\deg(v)=5$ and $\deg(w)=\deg(x)=4$. The vertices~$v$
    and~$u$ can be flipped to the other side of the edge~$wx$.}
  \label{fig:degree_three_uncrossed_fig}
\end{figure}

\section{Proofs from \cref{sec:charge}: The charging scheme}

\ChargeHermit*
\begin{proof}\label{PChargeHermit}
  Let~$h$ be a hermit in~$G$, and let~$uv$ be the edge of~$G$ that
  hosts~$h$. In the following we consider the vertex~$v$ only, the
  argument for~$u$ is symmetric.

  If~$\deg(v)\ge 5$, then~$h$ claims the two halfedges~$\half{hv}$
  and~$\half{uv}$ at~$v$. Clearly, the halfedge~$\half{hv}$ incident
  to~$h$ can be claimed by~$h$ only.  As for~$\half{uv}$, we first
  observe that no other hermit claims~$\half{uv}$ by
  \cref{lem:one_hermit_per_edge}. A T3-1 vertex claims only one
  uncrossed halfedge, which is incident to it, and both~$u$ and~$v$
  have degree at least four by
  \cref{lem:efficient_hermit_neighbors}. A T4-H vertex does not claim
  any uncrossed halfedge. So no T3-1 or T4-H vertex
  claims~$\half{uv}$. It remains to consider the peripheral edges of
  T3-2 and T3-3 vertices.

  So let~$z$ be a T3-2 or T3-3 vertex, and let~$\half{xw}$ be a
  peripheral halfedge that is assessed by~$z$. We may assume that the
  path~$xzw$ is uncrossed in~$D$, using \cref{prop:t32-choose} in
  case~$z$ is of type T3-2. Assume for a contradiction
  that~$\half{xw}=\half{uv}$. Then we redraw the uncrossed path~$uhv$
  to closely follow the uncrossed path~$xzw$, creating a quadrilateral
  face~$Q$ bounded by~$uhvz$ in the resulting $2$-plane drawing~$D'$
  of~$G$. Now we can add an uncrossed edge~$hz$ inside~$Q$, in
  contradiction to the maximality of~$G$. Thus, no vertex other
  than~$h$ claims~$\half{uv}$.
\end{proof}

\ClaimDouble*
\begin{proof}\label{PClaimDouble}
  Let~$h$ be a doubly crossed halfedge at~$v$ that is claimed by a
  vertex~$u$.

  If~$u$ is of type~T4-H, then by
  \cref{lem:degree4_hermit}, the first crossing~$\alpha$
  of~$h$ when traversing it starting from~$v$ is with a doubly crossed
  edge~$ux$ for which the arc~$u\alpha$ is uncrossed in~$D$. Given
  that~$ux$ is doubly crossed, the above statement determines~$u$
  uniquely. In particular, no other vertex claims~$h$.

  Consider the case that~$u$ is a T3-2 vertex. Let~$uw$
  denote the edge incident to~$u$ that is crossed by~$h$ in~$D$, and
  denote this crossing by~$\alpha$. Then by \cref{lem:cradj} the
  arc~$v\alpha$ of~$h$ is uncrossed in~$D$. Thus, the only other
  vertex that could claim~$h$ is~$w$. However, by
  \cref{lem:degree3_1crossed_degrees555} we have~$\deg(w)\ge 5$, and
  so~$w$ does not claim~$h$, either.
  
  It remains to consider the case that $u$ is of type~T3-1.
  Then the first crossing~$\alpha$ of~$h$ when 
  traversing it starting from~$v$ is with the edge $uw$ such that the 
  arc~$u\alpha$ is uncrossed by \cref{lem:degree3_2crossed_neighbordeg5}. 
  Suppose for contradiction that $h$ is claimed
  by another vertex. Then by \cref{lem:degree3_2crossed_neighbordeg5} 
  this vertex must be $w$, it must be a T3-1 vertex,
  and the arc $\alpha w$ must be uncrossed.
  Denote the second (other than $\alpha$)
  crossing of $h$ by $\beta$, and let edge $pq$ cross $h$ at $\beta$.
  For at least one endpoint of~$pq$, suppose without loss of generality
  it holds for~$p$, the arc~$p\beta$ is uncrossed in~$D$.
  Further $p$ must appear on a common face in $D$ with either $u$ or $w$,
  suppose without loss of generality it holds for~$u$.
  Then by \cref{lem:uncrossed_edge}, $p$ and $u$ are connected by
  an uncrossed edge in~$D$.
  As~$v$ is the only
  neighbor of~$u$ via an uncrossed edge in~$D$, it follows that~$p=v$,
  in contradiction to~$D$ being a simple drawing (the edges~$h$
  and~$pq=vq$ cross). 
\end{proof}

\begin{figure}[htbp]
  \centering
  \includegraphics[page=1]{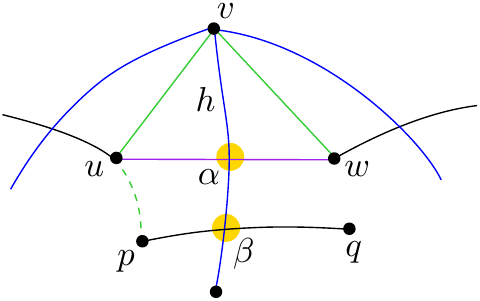}
  \caption{Proof of \cref{lem:double}: Two T3-1 vertices $u$ and $w$ claim the same halfedge $h$ at $v$ leading to a contradiction.}
  \label{fig:t3_1_multiclaims}
\end{figure}

\DegreeFive*
\begin{proof}\label{PDegreeFive}
  Consider a degree five vertex~$u$ in~$G$, and for a contradiction
  let~$v_1,v_2$ be two distinct low-degree vertices that claim
  halfedges at~$u$. We may suppose that~$\deg(v_1)\le\deg(v_2)$.

  By \cref{lem:degree4_hermit} a T4-H vertex claims halfedges at a
  vertex of degree at least six. So we have~$\deg(v_2)\le 3$. As each
  vertex of degree-three claims three halfedges at a vertex
  but~$\deg(u)=5$, we have~$\deg(v_1)=2$. Finally, by
  \cref{prop:hermit_deg} it follows that~$\deg(v_2)\ne 2$. So we
  have~$\deg(v_1)=2$ and~$\deg(v_2)=3$. Note that~$v_1$ claims two
  edges, both of which are uncrossed in~$D$: the edge~$uv_1$, which is
  incident to~$v_1$, and the edge that hosts~$v_1$. Also note
  that~$v_1$ and~$v_2$ are not adjacent in~$G$ by
  \cref{lem:efficient_hermit_neighbors}. In the following, we
  consider~$v_2$ in the role of the four different types of degree
  three vertices.

  \begin{case}[$v_2$ is a T3-3 vertex]
    Then the three halfedges claimed by~$v_2$ are uncrossed
    in~$D$. Together with the two uncrossed halfedges claimed by~$v_1$
    this means that all five edges incident to~$u$ are uncrossed
    in~$D$. But then we can add an edge~$v_1v_2$ to~$D$ by crossing
    only one, previously uncrossed edge, a contradiction to the
    maximality of~$G$.
  \end{case}
  
  \begin{case}[$v_2$ is a T3-2 vertex]
    Then two neighbors~$w,x$ of~$v_2$ are adjacent to~$u$ via
    uncrossed edges in~$D$. As~$v_2$ also claims a crossed edge
    incident to~$u$, it follows that at least one of the uncrossed
    edges~$ux,uw$ is claimed by~$v_1$. As~$v_1$ are~$v_2$ are not
    adjacent in~$G$, it follows that one of~$ux$ or~$uw$ hosts~$v_1$.
    Then by \cref{prop:t32-choose} we can arrange it so
    that~$v_1,y,v_2$ appear consecutively in the circular sequence of
    neighbors around~$u$ in~$D$, where~$y\in\{w,x\}$. As the edge~$uy$
    is uncrossed in~$D$, we can add an edge~$v_1v_2$ that crosses~$uy$
    only, a contradiction to the maximality of~$G$.
  \end{case}
  
  \begin{case}[$v_2$ is a T3-1 vertex]
    Then two of the halfedges claimed by~$v_2$ are crossed and the
    third one is~$v_2u$, which is uncrossed. The remaining two
    halfedges incident to~$u$ are the two uncrossed halfedges claimed
    by~$v_1$; see \cref{fig:degree5_hermit_t3}~(left). Then we can
    exchange the position of~$v_2$ and~$u$ in the drawing. As a
    result, all edges incident to~$u$ other than~$uv$ (which remains
    uncrossed) become singly crossed; see
    \cref{fig:degree5_hermit_t3}~(right). The resulting drawing is a
    $2$-plane drawing of~$G$ with strictly fewer doubly crossed edges
    than~$D$, a contradiction. (Alternatively, we could add an
    uncrossed edge~$v_1v_2$, in contradiction to the maximality
    of~$G$.)
  \end{case}

  We arrive at a contradiction in all cases. So, no such
  vertices~$v_1,v_2$ exist in~$D$. 
\end{proof}

\begin{figure}[htbp]
  \centering\includegraphics[page=1]{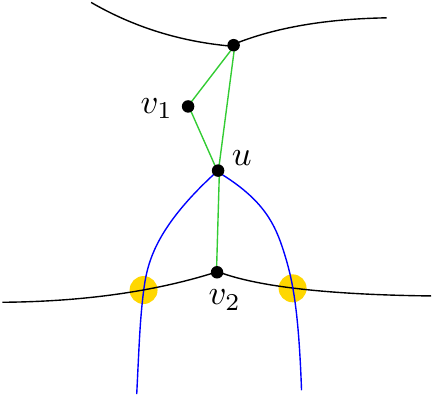}
  \qquad
  \includegraphics[page=1]{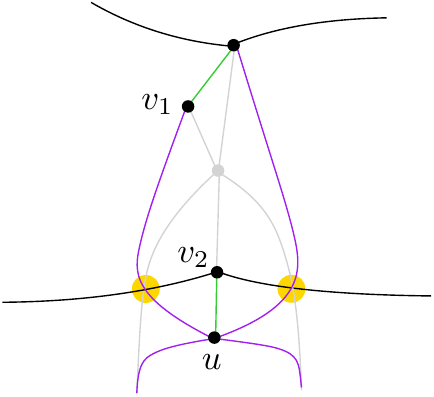}
  \caption{Proof of \cref{lem:deg5_atmost_one_low_deg}: A degree
    five vertex~$u$ adjacent to a hermit~$v_1$ and a T3-1
    vertex~$v_2$~(left); redraw to obtain a contradiction (right).}
  \label{fig:degree5_hermit_t3}
\end{figure}

\section{The Upper Bound: Proof of \cref{thm:upper}} \label{sec:upper_bound_full}

In this section we describe a construction for a family of maximal
$2$-planar graphs with few edges. The graphs can roughly be described
as braided cylindrical grids. More precisely, for a given~$k\in\N$ we
construct our graph~$G_k$ on~$10k+140$ vertices as follows.
\begin{itemize}
\item Take~$k$ copies of~$C_{10}$, the cycle on~$10$ vertices, and
  denote them by~$D_1,\ldots,D_k$. Denote the vertices of~$D_i$,
  for~$i\in\{1,\ldots,10\}$, by~$v_0^i,\ldots,v_9^i$ so that the edges
  of~$D_i$ are~$\{v_j^iv_{j\oplus 1}^i\colon 0\le j\le 9\}$,
  where~$\oplus$ denotes addition modulo~$10$.
\item For every~$i\in\{1,\ldots,k-1\}$, connect the vertices of~$D_i$ and~$D_{i+1}$ by a braided matching, as follows. For~$j$ even, add the edge~$v_j^iv_{j\oplus 8}^{i+1}$ to~$G_K$ and for~$j$ odd, add the edge~$v_j^iv_{j\oplus 2}^{i+1}$ to~$G_K$. See \cref{fig:nested_decagons}~(left) for illustration.
\item To each edge of~$D_1$ and~$D_k$ we attach a
  gadget~$X\simeq K_9\setminus(K_2+K_2+P_3)$ so as to forbid crossings
  along these edges. Denote the vertices of~$X$ by~$x_0,\ldots,x_8$
  such that~$\deg_X(x_0)=\deg_X(x_1)=8$, $\deg_X(x_8)=6$ and all other
  vertices have degree seven. Let~$x_6,x_7$ be the non-neighbors
  of~$x_8$. To an edge~$e$ of~$D_1$ and~$D_k$ we attach a copy of~$X$ so
  that~$e$ takes the role of the edge~$x_6x_7$ in this copy of~$X$. As
  altogether there are~$20$ edges in~$D_1$ and~$D_k$ and each copy
  of~$X$ adds seven more vertices, a total of~$20\cdot 7=140$ vertices
  are added to~$G_K$ with these gadgets.
\item For both~$D_1$ and~$D_k$ we connect their vertices by ten more
  edges each, essentially adding all edges of length two along each
  cycle. More explicitly, we add the edges~$v_j^iv_{j\oplus 2}^i$, for
  all~$0\le j\le 9$ and~$i\in\{1,k\}$.
\end{itemize}
This completes the description of the graph~$G_k$. Note that~$G_k$
has~$10k+140$ vertices and~$10k+10(k-1)+20\cdot 31+2\cdot 10=20k+630$
edges. So to prove \cref{thm:upper} asymptotically it suffices to
choose~$c\ge 630-2\cdot 140=350$ and show that~$G_k$ is maximal
$2$-planar. We will eventually get back to how to obtain the statement
for all values of~$n$.

To show that~$G_k$ is $2$-planar it suffices to give a $2$-plane
drawing of it. Such a drawing is given in \cref{fig:nested_decagons}:
(1)~We can simply nest the cycles~$D_1,\ldots,D_k$ with their
connecting edges using the drawing depicted in
\cref{fig:nested_decagons}~(left); and (2)~draw all copies of~$X$
attached to the edges of~$D_1$ and~$D_k$ using the drawing depicted in
\cref{fig:nested_decagons}~(right).

\begin{figure}[htbp]
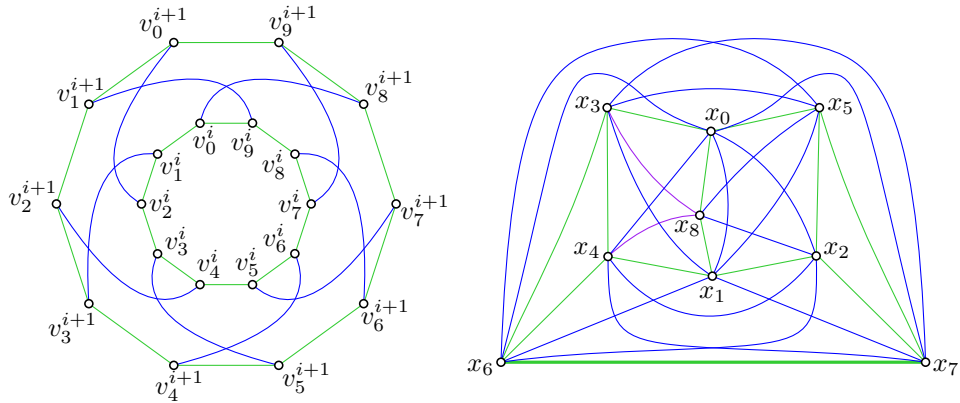

  \hfill
  \begin{minipage}[t]{.48\linewidth}
    \includegraphics[page=1]{decagons}
  \end{minipage}
  \hfill
  \begin{minipage}[t]{.48\linewidth}
    \includegraphics[page=2]{decagons}
  \end{minipage}
  \hfill
  \caption{The braided matching between two consecutive ten-cycles in~$G_k$, shown in blue (left); the gadget graph~$X$ that we attach to the edges of the first and the last ten-cycle of~$G_k$ (right).\label{fig:nested_decagons}}
\end{figure}

It is much more challenging, though, to argue that~$G_k$ is
\emph{maximal} $2$-planar. In fact, we do not know of a direct
argument to establish this claim. Instead, we will argue that~$G_k$
admits essentially only one $2$-plane drawing, which is the one
described above. Then maximality follows by just inspecting this
drawing and observing that no edge can be added there because every
pair of non-adjacent vertices is separated by a cycle of
doubly-crossed edges.

By \cref{thm:2planar_simple} we can restrict our attention to simple
drawings when arguing about maximality.

\begin{lemma}
  If a $2$-planar graph~$G$ is not maximal $2$-planar, then there exists a simple $2$-plane drawing of~$G$ that is not maximal $2$-plane.
\end{lemma}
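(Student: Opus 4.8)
The plan is to turn the abstract non-maximality hypothesis into a concrete simple drawing by invoking \cref{thm:2planar_simple}. Since $G$ is not maximal $2$-planar, by definition there are two vertices $u,v$ that are non-adjacent in $G$ such that the graph $G+uv$ is $2$-planar. First I would fix a crossing-minimal $2$-plane drawing $D'$ of $G+uv$; such a drawing exists because $G+uv$ is $2$-planar, and by \cref{thm:2planar_simple} it is simple.

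Next I would delete the curve representing the edge $uv$ from $D'$, obtaining a drawing $D$ of $G$. The key observation is that $D$ inherits both the simplicity and the $2$-planeness of $D'$: removing a single curve cannot increase the number of crossings on any remaining edge, so every edge of $D$ still has at most two crossings, and it cannot create a new common point between any pair of remaining edges, so every pair of edges of $D$ still meets in at most one point. Hence $D$ is a simple $2$-plane drawing of $G$.

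Finally, to see that $D$ is not maximal $2$-plane, I would add the edge $uv$ back, drawn exactly as in $D'$. Since $u$ and $v$ are non-adjacent in $G$, this is a genuine edge addition, and the resulting drawing is precisely $D'$, which is simple and $2$-plane. Thus $uv$ can be added to $D$ while keeping the drawing within the class of simple $2$-plane drawings, which certifies that $D$ is not maximal $2$-plane.

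I do not expect a substantial obstacle here; the whole argument rests on the single structural input that a crossing-minimal $2$-plane drawing is simple (\cref{thm:2planar_simple}), which is exactly what upgrades the abstract hypothesis ``$G+uv$ is $2$-planar'' into a concrete simple drawing we can work with. The only point requiring (routine) care is the verification that deleting an edge from a simple $2$-plane drawing keeps it simple and $2$-plane, which is immediate because both the per-edge crossing count and the number of common points of any edge pair can only decrease under edge deletion.
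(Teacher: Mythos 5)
Your proposal is correct and follows essentially the same route as the paper: take a crossing-minimal (hence, by \cref{thm:2planar_simple}, simple) $2$-plane drawing of $G+uv$, and observe that its restriction to $G$ is a simple $2$-plane drawing that is not maximal $2$-plane since $uv$ can be re-inserted. The extra verification that edge deletion preserves simplicity and $2$-planeness is routine and implicit in the paper's one-line argument.
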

\begin{proof}
  If~$G$ is not maximal $2$-planar, then there exists a pair~$u,v$ of nonadjacent vertices in~$G$ such that~$G'=G\cup e$ is $2$-planar. By \cref{thm:2planar_simple} there exists a simple $2$-plane drawing of~$G'$ whose restriction to~$G$ is a simple $2$-plane drawing of~$G$ that is not maximal $2$-plane.
\end{proof}

As a first step we show that~$X$ essentially has a unique $2$-plane
drawing. To prove this statement we need to make the term
\emph{essentially} more precise. As we consider~$X$ as a labeled
graph, we need to consider the automorphisms of~$X$. Recall
that~$X\simeq K_9\setminus(K_2+K_2+P_3)$. So there are four
equivalence classes of vertices: (1)~The unique vertex~$x_8$ of degree
six; (2)~the two non-neighbors of~$x_8$, which are~$x_6$ and~$x_7$;
(3)~the two vertices of degree eight, which are~$x_0$ and~$x_1$; and
(4)~the remaining vertices, all of degree seven. Both the vertices
in~(2) and~(3) can be freely and independently exchanged. And among
the vertices in~(4) there are two pairs of non-neighbors; we can
exchange both the pairs as a whole and also each pair
individually. Therefore, we get~$\mathrm{Aut}(X)=(Z_2)^5$
and~$|\mathrm{Aut}(X)|=32$ variations of the drawing depicted in
\cref{fig:nested_decagons}~(right). In all of these variations, there
is exactly one face that has~$x_6$ and~$x_7$ on its boundary (the
outer face in \cref{fig:nested_decagons}), and the uncrossed
edge~$x_6x_7$ is enclosed by a fourcycle of doubly crossed
edges. Therefore, the remaining vertices of~$X$ cannot interact with
(specifically receive additional edges from) the remainder
of~$G_k$. So we consider these~$32$ drawings to be essentially the
same. The claim is that there is no other relevant drawing of~$X$.

\begin{lemma}\label{lem:x}
  The graph~$X$ is maximal $2$-planar and it has an essentially unique simple $2$-plane drawing on the sphere~$\mathcal{S}^2$, the one depicted in \cref{fig:nested_decagons}~(right).  
\end{lemma}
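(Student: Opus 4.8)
The plan is to split the statement into three parts: that $X$ is $2$-planar, that its simple $2$-plane drawing is essentially unique, and that this uniqueness forces maximality. The first part is immediate, since the drawing in \cref{fig:nested_decagons}~(right) is a simple $2$-plane drawing of $X$. Let me first record the relevant features. Recalling $X\simeq K_9\setminus(K_2+K_2+P_3)$, the graph has exactly four non-edges: the two edges of the $P_3$, both incident to the degree-six vertex $x_8$ (namely $x_6x_8$ and $x_7x_8$, so that $x_6,x_7$ are the non-neighbors of $x_8$), and the two $K_2$'s among the degree-seven vertices. In the drawing the uncrossed edge $x_6x_7$ is enclosed by a $4$-cycle of doubly crossed edges. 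Since $\mathrm{Aut}(X)=(Z_2)^5$ acts on this drawing, it produces the $32$ variants that we regard as essentially the same, and the content of the lemma is that no other simple $2$-plane drawing exists.

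Next I would reduce maximality to uniqueness. By \cref{thm:2planar_simple} it suffices to argue about simple drawings, and every simple $2$-plane drawing of $X+e$ restricts to a simple $2$-plane drawing of $X$. Hence, if $e$ is one of the four non-edges and $X+e$ were $2$-planar, then by uniqueness the restriction to $X$ would be (a variant of) the drawing in \cref{fig:nested_decagons}~(right), and $e$ would have to be insertable there with at most two crossings and without overloading any existing edge. Thus maximality follows once uniqueness is established, simply by inspecting the single drawing and checking that each of the four non-edges is blocked: the two endpoints of every non-edge are separated by a cycle of doubly crossed edges, so every routing between them with at most two crossings would force a third crossing on an already doubly crossed edge.

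The heart of the proof, and the main obstacle, is therefore the uniqueness of the simple $2$-plane drawing. I would first pin down the crossing structure by a counting argument: planarizing a simple $2$-plane drawing of a graph with $n=9$ vertices and $m=32$ edges and applying Euler's formula gives at least $m-3n+6=11$ crossings, and since every edge carries at most two crossings this rigidly constrains how many edges are uncrossed, singly crossed, and doubly crossed. The two universal vertices $x_0,x_1$ (each adjacent to all other vertices) together with the degree-six vertex $x_8$ then tightly restrict the admissible rotation systems and the routing of edges around them. In principle this case analysis can be pushed through by hand, but the number of admissible partial configurations is large; I would therefore carry out the enumeration exhaustively with the computer procedure described in \cref{sec:upper_bound_sketch}, which explores all simple $2$-plane drawings of $X$ edge by edge, backtracking whenever the $2$-plane or simplicity condition is violated, and terminates having found exactly the $32$ expected drawings and no others. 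This confirms that the drawing of \cref{fig:nested_decagons}~(right) is essentially unique, which together with the reduction above yields that $X$ is maximal $2$-planar.
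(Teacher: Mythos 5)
Your proposal follows essentially the same route as the paper: reduce maximality to uniqueness via the fact that a crossing-minimal (hence simple, by \cref{thm:2planar_simple}) $2$-plane drawing of $X+e$ restricts to a simple $2$-plane drawing of $X$, and establish uniqueness by the exhaustive computer enumeration of simple $2$-plane drawings described in \cref{sec:upper_bound_sketch}. The paper additionally offers an alternative maximality check (a further computer run showing that no graph obtained from $K_9$ by deleting at most three edges is $2$-planar), but your inspection of the unique drawing is the paper's other stated option, so the two arguments coincide in substance.
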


We prove \cref{lem:x} using an exhaustive computational exploration of the space of simple drawings for~$X$. Our approach is similar to the one used by Angelini, Bekos, Kaufmann, and Schneck~\cite{AngeliniBKS20} for topological drawings of complete and complete bipartite graphs. The drawings are built incrementally and represented as a doubly-connected edge list (DCEL)~\cite[Chapter~2]{bkos-cgaa-08}. We insert a first edge to initialize, and then consider the remaining edges in some order such that whenever an edge is handled, at least one of its endpoints is in the drawing already (which is always possible if the graph is connected). When inserting an edge, we try (1)~all possible positions in the rotation at the source vertex and for each such position to (2)~draw the edge with zero, one or two crossings. Each such attempt amounts to a traversal of some face incident to the source vertex, and up to two more faces in the neighborhood (possibly a face appears several times in this process, as not every crossing necessarily changes the face and/or a crossing may return to a previously visited face).

Whenever we consider crossing an edge, we make sure to not cross an adjacent edge nor an edge that has been crossed already, so as to maintain a simple drawing; we also ensure that no edge is crossed more than twice, so as to maintain a $2$-plane drawing. If the target vertex is not yet present in the drawing, then we insert it into the corresponding face; otherwise, we have to find it on the boundary of the face. If the insertion of an edge is successful, we put the current state of the insertion search on a stack, and continue with the next edge. Otherwise, the edge could not be inserted and we backtrack and pop the previously inserted edge from the stack and proceed to go over the remaining options to insert this edge. 

Whenever all edges have been successfully added to the drawing, we output this drawing and backtrack to discover the possibly remaining drawings. Eventually, all options have been explored and we know the collection of possible drawings. As the algorithm considers all possible labeled drawings of the given graph, and the number of such drawings is typically exponential in the number of vertices, we can only use it for small, constant size graphs. Luckily our graph~$X$ described above is sufficiently small, with nine vertices and $32$~edges. We can verify that~$X$ is maximal $2$-planar either by studying the drawing shown in \cref{fig:nested_decagons}~(right), or by running the algorithm a few more times to obtain the following statement.

\begin{lemma}
  No graph that is obtained by removing (at most) three edges from~$K_9$ is $2$-planar.
\end{lemma}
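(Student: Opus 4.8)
The plan is to reduce the statement, via the subgraph-monotonicity of $2$-planarity, to a small finite set of graphs that can be settled by exactly the enumeration machinery used for \cref{lem:x}. The key observation is that a subgraph of a $2$-planar graph is again $2$-planar (restrict any $2$-plane drawing), so non-$2$-planarity is inherited by every supergraph on the same vertex set. Every graph obtained by deleting at most three edges from $K_9$ has at least $33$ edges and therefore contains a spanning subgraph with exactly $33$ edges; hence it suffices to prove that \emph{no} $9$-vertex graph with exactly $33$ edges is $2$-planar. (As a sanity check, $K_9$ itself is excluded even more cheaply: it has $\binom{9}{2}=36>5\cdot 9-10=35$ edges and so violates the density bound, but this case is already covered by monotonicity.)

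Next I would note that a $9$-vertex graph with exactly $33$ edges is precisely $K_9$ minus a three-edge set $S$, and that since $\mathrm{Aut}(K_9)=S_9$ acts as the full symmetric group, two such graphs are isomorphic if and only if the removed sets $S$ are isomorphic as graphs. There are exactly five isomorphism types of three-edge graphs without isolated vertices: the triangle $K_3$, the path $P_4$, the star $K_{1,3}$, the disjoint union $P_3+K_2$, and the perfect matching $3K_2$. Thus the entire statement collapses to checking these five explicit graphs.

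Finally, for each of the five graphs I would invoke \cref{thm:2planar_simple}: since a crossing-minimal $2$-plane drawing is simple for $k\le 3$, a graph is $2$-planar if and only if it admits a \emph{simple} $2$-plane drawing, so it suffices to show that none of the five graphs has one. I would then run the same DCEL-based incremental enumeration described for \cref{lem:x} (and analogous to that of Angelini, Bekos, Kaufmann, and Schneck~\cite{AngeliniBKS20}): insert the edges one at a time, trying every position in the rotation at the source vertex and every way of drawing the new edge with zero, one, or two crossings, while maintaining simplicity and the $2$-plane property and backtracking on failure. For a $2$-planar graph this search would eventually output a complete drawing; for each of the five graphs it terminates having produced none, which establishes the lemma.

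The main obstacle is not conceptual but one of completeness and feasibility of the exhaustive search: one must argue that every simple $2$-plane drawing of each input is reachable through some insertion order under the rotation/crossing branching, and that the search terminates on nine-vertex, $33$-edge inputs. Both points follow exactly as for \cref{lem:x}. Moreover, since each of the five graphs is only one edge denser than the gadget $X$, the additional edge merely tightens the feasibility constraints and prunes the search tree earlier, so the runs remain as quick as the few-minutes computation reported for $X$.
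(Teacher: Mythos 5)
Your proposal is correct and matches the paper's approach: the paper likewise establishes this lemma purely by rerunning the same DCEL-based exhaustive enumeration used for the gadget $X$ ("running the algorithm a few more times"), and your reduction via subgraph monotonicity and the five isomorphism types of three-edge deletion sets is just a careful spelling-out of the finite case analysis that makes those few runs suffice.
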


When attempting to prove that~$G_k$ is maximal $2$-planar in a similar fashion, we face an obvious challenge: The graph~$G_k$ is definitely \emph{not} of small, constant size. In order to overcome this challenge, we (1)~leave out all copies of~$X$ from the graph (we know their drawing, anyway, by \cref{lem:x}) and replicate their role in~$G_k$ by initializing the edges of~$D_1$ and~$D_k$ to start with two fake crossings, effectively making them uncrossable for other edges and (2)~compute the possible drawings iteratively: Start with the unique drawing of~$D_1$, and iteratively consider all partial drawings obtained so far by adding another~$D_i$ and then clean up and remove those parts of the resulting drawings that are not useable anymore (for instance, because they are enclosed by doubly-crossed edges). The hope is that such a cleanup step allows us to maintain a ``relevant'' drawing of small, constant size, although conceptually the whole drawing can be huge. So by \emph{partial drawing} we refer to a $2$-plane drawing that has been obtained by starting from~$D_1$ and successfully adding a sequence~$D_2,\ldots,D_i$ of ten-cycles as they appear in~$G_i$, and after each such extension applying the cleanup step.

\paragraph{\textbf{Cutting down the search space.}} Recall that our overall goal is to draw~$G_k$. To obtain a drawing of~$G_k$, eventually a partial drawing must be completed by extending it with~$D_k$.  Therefore, in fact, we try to extend each partial drawing twice: first with a normal, unconstrained~$D_i$ and second---if the first extension is possible---with~$D_k$, whose edges are uncrossable. Another hope is that the number of partial drawings to consider remains manageable. In order to help with this, we consider all partial drawings up to isomorphisms that map~$D_{i-1}$ to itself while maintaining the parity of the indices (because only~$D_{i-1}$ has edges to~$D_i$ and the aforementioned isomorphisms maintain the set of these edges).

We also analyze every partial drawing that is discovered to see whether it is conceivable that it eventually can be completed by extending it with~$D_k$. If this is impossible, then the drawing under consideration is irrelevant for the purposes of drawing~$G_k$. A necessary criterion for such an extension to exist is the following.

\begin{lemma}\label{lem:curves}
  A partial drawing~$\Gamma$ of~$G_i$, for some~$i\in\N$, can be extended to a $2$-plane drawing of~$G_k$, for some~$k>i$, only if there exists a face~$f$ in~$\Gamma$ and a collection~$\Phi=\{\phi_v\colon v\in V(D_i)\}$ of curves such that~$\phi_v$ connects~$v$ to (some point in the interior of)~$f$ in~$\Gamma$ and in the drawing~$\Gamma\cup\Phi$ every edge of~$\Gamma$ has at most two crossings.
\end{lemma}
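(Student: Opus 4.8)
The plan is to assume such an extension exists and to read off $f$ and $\Phi$ from it. So let $D$ be a $2$-plane drawing of $G_k$, for some $k>i$, that extends $\Gamma$, and let $W$ denote the part of $D$ drawing everything outside $\Gamma$: the cycles $D_{i+1},\ldots,D_k$, the braided matchings $M_i,\ldots,M_{k-1}$ between consecutive cycles, the copies of $X$ attached to $D_k$, and the length-two edges along $D_k$. Since $D$ is $2$-plane, every edge $e$ of $\Gamma$ is crossed at most twice in $D$. Writing $c(e)$ for the number of these crossings that are with \emph{other edges of $\Gamma$}, the edges of $W$ therefore cross $e$ at most $2-c(e)$ times in total. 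This budget is what I will spend on the curves in $\Phi$.

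First I would locate the target face~$f$. By \cref{lem:x} the gadget forces every edge of $D_k$ to be uncrossed in any $2$-plane drawing of $G_k$, so no edge of $\Gamma$ crosses the closed curve $\sigma$ traced by $D_k$ in $D$; moreover $D_k$ and $G_i$ share no vertex, so no vertex of $\Gamma$ lies on $\sigma$ by the drawing axioms. Hence $\sigma$ is a connected subset of the plane disjoint from $\Gamma$, and consequently $\sigma$ lies entirely inside a single face $f$ of $\Gamma$. In particular, all ten vertices of $D_k$ lie in the interior of~$f$; this is the face I take.

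Next I would route the curves, and the whole point is to keep the crossing multiplicity on each edge of $\Gamma$ under control. For each $v\in V(D_i)$ I follow the unique edge of $M_i$ leaving $v$ to its partner in $D_{i+1}$, then the unique edge of $M_{i+1}$ leaving that vertex, and so on, using exactly one matching edge per level until I arrive at a vertex $w_v$ of $D_k$. Because each $M_j$ is a \emph{perfect} matching, i.e.\ a bijection $V(D_j)\to V(D_{j+1})$, the composition of these bijections is again a bijection; hence distinct sources $v$ yield vertex-disjoint, and in particular edge-disjoint, paths $P_v$ ending at distinct vertices $w_v$ of $D_k$. I let $\phi_v$ trace the drawing of $P_v$ in $D$ from $v$ to $w_v$, which lies in the interior of $f$ by the previous paragraph, so $\phi_v$ indeed connects $v$ to a point of $f$.

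It remains to verify the crossing bound, which is exactly where edge-disjointness pays off. Every crossing of some $\phi_v$ with a fixed edge $e$ of $\Gamma$ occurs along one of the matching edges traversed by $P_v$ (a portion of a curve inside $f$ crosses no edge of $\Gamma$, since the interior of a face is free of $\Gamma$-edges). As the $P_v$ are edge-disjoint and consist solely of edges of $W$, the matching edges used across all of $\Phi$ form a subfamily of $E(W)$ with no repetition, so the total number of crossings of $\Phi$ with $e$ is at most the number of crossings of $W$-edges with $e$ in $D$, namely at most $2-c(e)$. Thus in $\Gamma\cup\Phi$ the edge $e$ carries at most $c(e)+(2-c(e))=2$ crossings, as required. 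I expect the main obstacle to be precisely this multiplicity control: a careless routing of ten curves into one face would overload whichever edges of $\Gamma$ separate the sources from $f$, and it is the braided bijective matching structure of $G_k$ that lets all ten curves escape along edge-disjoint routes and thereby inherit the $2$-plane crossing budget of the hypothetical extension $D$.
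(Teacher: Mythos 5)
Your proof is correct and follows essentially the same route as the paper's: locate the face of~$\Gamma$ containing the uncrossable cycle~$D_k$, route each~$\phi_v$ along the drawn matching path from~$v$ through~$D_{i+1},\ldots,D_{k-1}$ to~$D_k$, and let the curves inherit the crossing budget of the $2$-plane extension. Your explicit check that the ten paths are edge-disjoint (via the bijectivity of the braided matchings), so that crossings with an edge of~$\Gamma$ are not double-counted, is a detail the paper's proof leaves implicit.
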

\begin{proof}
  To be extensible to a drawing of~$G_k$, the partial drawing~$\Gamma$ must eventually be completed by~$D_k$. As the edges of~$D_k$ are uncrossable, all of~$D_k$ lies in a single face of~$\Gamma$. Each vertex~$v$ of~$D_i$ has a path (via~$D_{i+1},\ldots,D_{k-1}$) to a vertex of~$D_k$. This path corresponds to a curve~$\phi_v$ as stated above. As~$\phi_v$ corresponds to a path in~$G_k$, we cannot say much about the number of crossings along~$\phi_v$. But the edges of~$\Gamma$ definitely must not be crossed more than twice overall.
\end{proof}

The existence of a face~$f$ and a set~$\Phi$ of curves as in \cref{lem:curves} can be decided efficiently by applying standard network flow techniques to the dual of~$\Gamma$: A single source~$s$ connects by a unit capacity edge to every vertex of~$D_i$, each vertex of~$D_i$ connects to all its incident faces, and every edge~$e$ of~$\Gamma$ connects the two incident faces by an edge of capacity two minus the number of crossings of~$e$ in~$\Gamma$. Then the question is whether or not there exists a flow of value ten from~$s$ to~$f$. We call every face of~$\Gamma$ for which such a flow exists a \emph{potential final face} of~$\Gamma$. By \cref{lem:curves} every partial drawing that does not have any potential final face can be discarded.

\paragraph{\textbf{Cleanup.}} Let us describe in a bit more detail the cleanup step that is applied to all partial drawings that the algorithm discovers. The cleanup is based on a classification of the faces. A face is \emph{active} if it has a flow of value at least three to some potential final face.

\begin{lemma}\label{lem:active}
  When extending a $2$-plane drawing~$\Gamma$ of~$G_i$ to a $2$-plane drawing of~$G_k$, for~$k>i$, all vertices of~$G_k\setminus G_i$ must be placed into active faces of~$\Gamma$.
\end{lemma}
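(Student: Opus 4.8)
The plan is to argue directly from an arbitrary extension: suppose $\Gamma$ (a drawing of $G_i$) is extended to a simple $2$-plane drawing $\Delta$ of $G_k$, and exhibit, for every vertex $w\in G_k\setminus G_i$, an explicit flow of value at least three from the face of $\Gamma$ containing $w$ to one fixed potential final face. Everything rests on a single \emph{translation principle}: any family of paths in $H:=G_k\setminus G_i$ that uses only edges outside $\Gamma$ and is edge-disjoint induces, via the drawing $\Delta$, a collection of walks in the dual of $\Gamma$ whose load on each edge $e$ equals the number of those paths crossing $e$. Since $\Delta$ is simple and $2$-plane, each edge $e$ of $\Gamma$ is crossed by at most $2-\mathrm{cr}(e,\Gamma)$ edges of $H$ (each crossing $e$ at most once, by simplicity), which is exactly the capacity of $e$ in the dual network of \cref{lem:curves}. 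Hence $p$ edge-disjoint $H$-paths between two regions yield a feasible flow of value $p$ between the corresponding faces.

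First I would pin down the target face. Because the edges of $D_k$ are uncrossable, none of them crosses any edge of $\Gamma$, so the connected, uncrossed cycle $D_k$ lies entirely inside one open face $f_0$ of $\Gamma$. I then show $f_0$ is a potential final face. The forward matchings are perfect matchings, so composing them defines ten vertex-disjoint paths from the ten vertices of $D_i$ to $D_k$ using only $H$-edges. By the translation principle these give a flow of value ten from the source $s$ (which feeds each vertex of $D_i$) to $f_0$, which is precisely the defining condition of a potential final face.

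Next I handle a new vertex $w$. If $w\in D_k$ then $w\in f_0$, which is trivially active, as the required flow from $f_0$ to itself is unbounded. Otherwise $w\in D_j$ with $i<j<k$, sitting in some face $g$ of $\Gamma$. Within the forward cylinder $F_j:=D_j\cup\cdots\cup D_k$, whose edges all lie in $H$, the vertex $w$ has exactly three incident edges: its two cycle edges in $D_j$ and its single forward matching edge. I claim $w$ is $3$-edge-connected to $D_k$ in $F_j$: contracting $D_k$ to a sink, the cyclic layers together with the perfect matchings admit no $2$-edge-cut separating $w$. Menger's theorem then supplies three edge-disjoint $H$-paths from $w$ to $D_k$, and the translation principle converts them into a flow of value three from $g$ to $f_0$. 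Hence $g$ is active.

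The main obstacle is the translation principle itself, specifically verifying the capacity bound: I must argue that the three edge-disjoint paths never over-load an edge $e$ of $\Gamma$. This is where simplicity (each $H$-edge crosses $e$ at most once) and edge-disjointness (distinct $H$-edges per path) combine with the global $2$-plane budget to cap the total crossings on $e$ by $2-\mathrm{cr}(e,\Gamma)$. A secondary point requiring care is the purely combinatorial $3$-edge-connectivity of the braided cylinder from $w$ to $D_k$, which must be checked against the specific braiding pattern so that no short cut appears; the degree-three incidence of $w$ in $F_j$ shows that three is also best possible, matching exactly the threshold in the definition of an active face.
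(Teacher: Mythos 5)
Your proposal is correct and follows essentially the same route as the paper: both arguments route three edge-disjoint forward paths from the new vertex to the face containing~$D_k$ (the vertex's own composed-matching path plus the two cycle-edge detours through its $D_j$-neighbours and \emph{their} matching paths) and convert them, via the crossing budget~$2-\mathrm{cr}(e,\Gamma)$ per edge, into a flow of value three to a potential final face. The only point you flag as delicate---the $3$-edge-connectivity of the braided cylinder---needs no Menger argument and is independent of the braiding pattern, since the three paths just described are an explicit witness for any perfect matchings between consecutive cycles.
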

\begin{proof}
  Consider a face~$f$ of~$\Gamma$, and suppose that in an extension~$\Gamma'$ of~$\Gamma$ to a drawing of~$G_k$, at least one vertex of~$G_j\setminus G_i$, for some~$i<j\le k$, is placed into~$f$. We have to show that~$f$ is active in~$\Gamma$. If~$\ell$ vertices of~$D_j$ are placed into~$f$ in~$\Gamma'$, then by \cref{lem:curves} there is a flow of value at least~$\ell$ to a potential final face of~$\Gamma$. So it remains to consider the case that only one or two vertices of~$D_j$ are placed into~$f$ in~$\Gamma'$. Let~$v$ be such a vertex. Then, as~$D_j$ forms a cycle in~$G_j$, two of its edges cross the boundary of~$f$ to reach vertices~$u,w$ outside of~$f$. By \cref{lem:curves} there is a set~$\Phi$ of three curves~$\phi_u,\phi_v,\phi_w$ from~$u,v,w$, respectively, to a potential final face of~$\Gamma$, such that every edge of~$\Gamma$ is crossed at most twice  in~$\Gamma\cup\Phi$. We can extend the curves~$\phi_u$ and~$\phi_w$ with the two paths from~$v$ to~$u,w$ along~$D_j$, respectively, to obtain a set~$\Phi'$ of three curves from~$v$ to a potential final face in~$\Gamma$ such that every edge of~$\Gamma$ is crossed at most twice in~$\Gamma\cup\Phi'$. 
  So~$\Phi'$ certifies the existence of a flow of value at least three from~$f$ to a potential final face, as claimed.
\end{proof}

In addition to active faces we also have passive and transit faces. A face~$f$ of a partial drawing is \emph{passive} if it is not active, it has a vertex~$v$ of~$D_i$ on its boundary, and in the dual there is a path~$P$ of length at most two from~$f$ to some active face such that the first edge of~$P$ is not incident to~$v$. A face of a partial drawing is \emph{transit} if it is neither active nor passive but in the dual it has at least one edge to an active face and at least one other edge that leads to an active or passive face (possibly two edges to the same active face). A face that is active, passive, or transit is called \emph{relevant}; all other faces are \emph{irrelevant}. By the following lemma, we can discard irrelevant faces from consideration for the purposes of drawing~$G_k$.

\begin{lemma}\label{lem:irrelevant}
  When extending a $2$-plane drawing~$\Gamma$ of~$G_i$ to a $2$-plane drawing of~$G_k$, for~$k>i$, the vertices and edges of~$G_k\setminus G_i$ are disjoint from the irrelevant faces of~$\Gamma$.
\end{lemma}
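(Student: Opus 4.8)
The plan is to reduce the statement to a local analysis of how a single new edge can traverse the faces of the partial drawing~$\Gamma$. For the vertices of~$G_k\setminus G_i$ there is nothing new to prove: by \cref{lem:active} every such vertex must be placed into an active face of~$\Gamma$, and active faces are relevant by definition. Hence the entire content of the lemma concerns the edges of~$G_k\setminus G_i$, and I would argue that each such edge, viewed as a curve overlaid on~$\Gamma$, only ever passes through relevant faces.

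First I would bound the number of faces of~$\Gamma$ that a fixed edge~$e$ of~$G_k\setminus G_i$ can visit. Since we only consider $2$-plane drawings, $e$ has at most two crossings in the extended drawing~$\Gamma'$; in particular it crosses edges of~$\Gamma$ at most twice. Each crossing of~$e$ with a $\Gamma$-edge is precisely where~$e$ passes from one face of~$\Gamma$ into an adjacent one, so~$e$ traverses a sequence $f_0,f_1,\dots,f_t$ of faces of~$\Gamma$ with~$t\le 2$, where consecutive faces are joined, in the dual, by the crossed $\Gamma$-edge. Crossings of~$e$ with other \emph{new} edges stay inside the same face of~$\Gamma$ and play no role here.

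The core is then a short case analysis on the endpoints of~$e$, using the fact, read off from the construction, that every edge of~$G_k\setminus G_i$ either has both endpoints in~$G_k\setminus G_i$ or is one of the braided matching edges from~$D_i$ to~$D_{i+1}$, with exactly one endpoint in~$D_i$; no new edge has both endpoints in~$G_i$. If both endpoints are new, then~$f_0$ and~$f_t$ are the active faces housing them, and a possible middle face~$f_1$ has a dual edge to each of~$f_0,f_t$ (possibly the same active face twice), which makes~$f_1$ relevant: it is either active/passive, or else transit, having two distinct dual edges to active faces. If instead one endpoint is a vertex~$v\in D_i$, then~$f_0$ is a face incident to~$v$ and~$f_t$ is active. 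Here I would use that the first $\Gamma$-edge crossed by~$e$ cannot be incident to~$v$, because in a simple drawing two edges sharing the endpoint~$v$ do not cross; this yields a dual path of length at most two from~$f_0$ to the active face~$f_t$ whose first edge avoids~$v$, which is exactly the condition making~$f_0$ passive (when it is not already active). Any middle face~$f_1$ is again transit, as it carries a dual edge to the active face~$f_t$ and a second dual edge to the active-or-passive face~$f_0$.

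The main obstacle, I expect, is bookkeeping the definitions of \emph{passive} and \emph{transit} so that they line up precisely with these two geometric situations, in particular verifying the ``first edge not incident to~$v$'' clause in the definition of a passive face, which is where simplicity of the drawing does the real work. Once this alignment is confirmed, every face met by any vertex or edge of~$G_k\setminus G_i$ is active, passive, or transit, i.e.\ relevant, so these vertices and edges avoid the irrelevant faces of~$\Gamma$, and the lemma follows.
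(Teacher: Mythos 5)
Your proposal is correct and follows essentially the same route as the paper's proof: vertices are handled by \cref{lem:active}, and edges are split into the same two types, with at most three faces traversed and middle faces certified as transit. You in fact supply a detail the paper leaves implicit, namely that simplicity of the drawing forces the first crossed edge to be non-incident to~$v$, which is what makes the starting face of a matching edge satisfy the passive criterion.
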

\begin{proof}
  By \cref{lem:active} all vertices of~$G_k\setminus G_i$ lie in active faces of~$\Gamma$. Every edge in~$G_k$ has at most two crossings, so it intersects at most three faces of~$\Gamma$. There are two types of edges in~$G_k\setminus G_i$: (1)~edges that connect two vertices of~$G_k\setminus G_i$ and (2)~edges that connect a vertex of~$D_i$ to a vertex of~$G_k\setminus G_i$. An edge of the first type starts and ends in an active face (possibly the same face). If it passes through another face in between, then this face satisfies the criterion for a transit face, which is, therefore, not irrelevant. An edge of the second type ends in an active face and starts in an active or passive face (possibly the same face where it ends). If it passes through another face in between, then this face satisfies the criterion for a transit face, which is, therefore, not irrelevant. 
\end{proof}

By \cref{lem:irrelevant} it is safe to remove irrelevant faces from
the drawing at the end of each iteration. In a similar fashion we also
remove irrelevant vertices. A vertex or crossing~$v$ of a drawing
of~$G_i$ is \emph{relevant} if it is (1)~a vertex of~$D_i$;
(2)~incident to two or more relevant faces; or (3)~incident to exactly
one relevant face that would become a lens (bounded by a two-cycle
only) if~$v$ was removed (in this case removing a vertex amounts to
contracting one of the two incident edges along the boundary of the
relevant face). All other vertices are \emph{irrelevant}. Finally, we
complete the cleanup step by adding an uncrossable star in each region
that has been evacuated by removing the irrelevant faces. This ensures
that the drawing is maintained and no edges can cross the
space---which now appears to be empty, although it is not---in future
iterations.

This completes the description of our computational approach to prove
that the graph~$G_k$ (essentially) has a unique $2$-plane drawing,
which proves \cref{thm:upper} asymptotically. It remains to argue how
to obtain the statement for all values of~$n$.

Technically, the smallest sensible choice for~$k$ is~$k=2$, which
leaves us with~$160$ vertices. To address the range~$n<160$, we use
that every $2$-planar graph on~$n$ vertices has at most~$5n-10$ edges
and simply set~$c\ge(5\cdot 159-10)-2\cdot 159=467$. By increasing~$k$
we only get values of~$n$ that are divisible by~$10$. To obtain all
other values in between, note that we can add a new vertex~$x'$ to a
copy of the gadget~$X$ and connect it to the three vertices~$x_3$,
$x_4$ and~$x_6$. Considering the (essentially) unique $2$-plane
drawing of~$X$ (refer to \cref{fig:nested_decagons}), in any $2$-plane
drawing of~$X^+:=X+x'$, the vertex~$x'$ must be placed into the one of
the four faces around the uncrossed triangle~$x_3x_4x_6$, and no other
edge incident to~$x'$ can be added to the graph while keeping it
$2$-planar. Therefore, by adding such a vertex to the desired number
of one to nine copies of~$X$ in~$G_k$, we can reach the full range of
values for~$n$. Each additional vertex adds three edges, rather than
two. But due to our generous setting of~$c$, the at most nine extra
edges are accounted for already.

\end{document}